\documentclass[10pt]{amsart}

\usepackage{amsmath,amsthm,amsfonts,latexsym,amssymb,mathrsfs,setspace,enumerate,color,cite,graphicx,epsf,mathtools,fullpage,array,amstext,verbatim}


\newtheorem{thm}{Theorem}
\newtheorem{lem}[thm]{Lemma}
\newtheorem{prop}[thm]{Proposition}
\newtheorem{cor}[thm]{Corollary}
\numberwithin{equation}{section}
\numberwithin{thm}{section}

\newtheorem{conj}[thm]{Conjecture}
\newtheorem{question}[thm]{Question}

\newcommand{\rat}{\mathbb Q}
\newcommand{\real}{\mathbb R}
\newcommand{\com}{\mathbb C}
\newcommand{\alg}{\overline\rat}
\newcommand{\algt}{\alg^{\times}}

\newcommand{\intg}{\mathbb Z}
\newcommand{\nat}{\mathbb N}

\newcommand{\xx}{{\bf x}}
\newcommand{\yy}{{\bf y}}
\newcommand{\zz}{{\bf z}}

\newcommand{\tors}{\mathrm{tors}}

\newcolumntype{P}[1]{>{\centering\arraybackslash}p{#1}}

\title[Exceptional Points]{Counting Exceptional Points for Rational Numbers Associated to the Fibonacci Sequence}
\author{Charles L. Samuels}

\begin{document}

\begin{abstract}
	If $\alpha$ is a non-zero algebraic number, we let $m(\alpha)$ denote the Mahler measure of the minimal polynomial of $\alpha$ over $\intg$.
	A series of articles by Dubickas and Smyth, and later by the author, develop a modified version of the Mahler measure called the $t$-metric Mahler measure, denoted $m_t(\alpha)$.
	For fixed $\alpha\in \alg$, the map $t\mapsto m_t(\alpha)$ is continuous, and moreover, is infinitely differentiable at all but finitely many points, called {\it exceptional points} for $\alpha$.  
	It remains open to determine whether there is a sequence of elements $\alpha_n\in \alg$ such that the number of exceptional points for $\alpha_n$ tends to $\infty$ as $n\to \infty$.
	
	We utilize a connection with the Fibonacci sequence to formulate a conjecture on the $t$-metric Mahler measures.  If the conjecture is true, we prove that it is best possible and that it implies the
	the existence of rational numbers with as many exceptional points as we like.  Finally, with some computational assistance, we resolve various special cases of the conjecture that constitute 
	improvements to earlier results.
\end{abstract}

\maketitle

\section{Introduction} \label{Intro}

Suppose $\alpha$ is a non-zero algebraic number with minimal polynomial over $\intg$ given by
\begin{equation*}
	F(z) = a\cdot \prod_{i=1}^d (z - \alpha_i).
\end{equation*}
Under these assumptions, the {\it (logarithmic) Mahler measure} of $\alpha$ is defined to be 
\begin{equation*}
	m(\alpha) = \log |a| + \sum_{i=1}^d \log\max\{1,|\alpha_i|\}.
\end{equation*}
It is obvious from the definition that $m(\alpha) \geq 0$ for all $\alpha\in \algt$, and moreover, it follows from Kronecker's Theorem \cite{Kronecker} that $m(\alpha) = 0$ if and only if $\alpha$ is a root of unity.
We also note that the behavior of $m(\alpha)$ is particularly straightforward when $\alpha\in \rat^\times$.  Indeed, if $\alpha = r/s$ and $\gcd(r,s) = 1$ then $m(\alpha) = \log\max\{|r|,|s|\}$.

In attempting to construct large prime numbers, D.H. Lehmer \cite{Lehmer} came across the problem of determining whether there exists a sequence of non-zero
algebraic numbers $\{\alpha_n\}$, not roots of unity, such that $m(\alpha_n)$ tends to $0$ as $n\to\infty$.  This problem remains unresolved, although substantial evidence
suggests that no such sequence exists (see \cite{BDM, MossWeb, Schinzel, Smyth}, for instance).  This assertion is typically called Lehmer's conjecture.

\begin{conj}[Lehmer's Conjecture]
	There exists $c>0$ such that $m(\alpha) \geq c$ whenever $\alpha\in \algt$ is not a root of unity. 
\end{conj}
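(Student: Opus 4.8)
The plan is to attack the conjecture by way of auxiliary polynomials, following the circle of ideas that has produced every known lower bound for $m(\alpha)$. Fix $\alpha\in\algt$ of degree $d$, not a root of unity, with minimal polynomial $F$ over $\intg$, and argue by contradiction, assuming $m(\alpha)$ is so small that all conjugates $\alpha_1,\dots,\alpha_d$ lie very close to the unit circle. First I would bring in the power maps $\alpha\mapsto\alpha^p$ for primes $p$, exploiting the congruence $F(x^p)\equiv F(x)^p\pmod p$ to produce, for each small prime $p$, a nonzero rational integer --- a resultant attached to $F$ and $F(x^p)$ --- divisible by $p^d$. Such an integer has absolute value at least $p^d$, and this competes against an upper bound for the same resultant expressed in terms of $m(\alpha)$; combining the resulting inequalities over a well-chosen range of primes is essentially Dobrowolski's argument and yields $m(\alpha)\gg d^{-1}(\log\log d/\log d)^3$. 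The objective is to push this lower bound to an absolute constant.

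The new ingredient I would try to inject is the $t$-metric Mahler measure $m_t(\alpha)$ and the Fibonacci connection highlighted in the abstract. The idea is that varying $t$ --- and in particular the exceptional points, where $t\mapsto m_t(\alpha)$ fails to be smooth --- should detect arithmetic constraints that a single auxiliary polynomial cannot see. Concretely, I would look for inequalities tying $m_t(\alpha)$, for several values of $t$, to the measures $m(\alpha^n)$ with $n$ running over Fibonacci indices, and then feed these relations into the resultant estimate above. A proof of the conjecture advertised in this paper is precisely the kind of rigid input one would want at this point, so establishing that conjecture --- or a robust enough special case --- is the natural first milestone.

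The step I expect to be the genuine obstacle is \emph{removing the dependence on $d$}. Every known construction spends all of its room controlling $d$ conjugates simultaneously, and no mechanism is known that trades growth in the degree for a fixed quantum of arithmetic information; this is exactly why Lehmer's conjecture has resisted proof since \cite{Lehmer}. I would not expect the route above to close that gap without a fundamentally new idea. What it can realistically deliver is improvement of the constants and of the $\log d$ exponents, and --- through the conjecture of this paper --- unconditional statements about rational numbers with arbitrarily many exceptional points, even in the continued absence of a full proof of Lehmer's conjecture.
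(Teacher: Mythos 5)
This statement is Lehmer's Conjecture, which the paper states precisely because it is a famous \emph{open problem}; the paper offers no proof of it, and neither does your proposal. What you have written is a research program, not an argument: the Dobrowolski-style resultant estimate you outline genuinely yields only $m(\alpha)\gg d^{-1}(\log\log d/\log d)^{3}$, a bound that degenerates as the degree $d$ grows, and you candidly concede that you have no mechanism for removing the dependence on $d$. That concession is exactly the missing step --- without it nothing you describe establishes the existence of an absolute constant $c>0$. The proposed injection of the $t$-metric Mahler measure also does not help here: the paper's results run in the opposite direction (properties of $m_t$ are shown to be \emph{equivalent} to, or conditional on, Lehmer's conjecture --- e.g.\ the induced metric on $\algt/\algt_\tors$ is discrete if and only if the conjecture holds), so they cannot be used as input to prove it without circularity. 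In short, there is no proof to compare against, and your attempt correctly identifies but does not bridge the gap that has kept the problem open since Lehmer's 1933 paper.
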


Dobrowolski \cite{Dobrowolski} provided the best known lower bound on $m(\alpha)$ in terms of $\deg\alpha$, while Voutier \cite{Voutier}
later gave a  version of this result with an effective constant.  Nevertheless, only little progress has been made on Lehmer's conjecture for an arbitrary algebraic number $\alpha$.

Dubickas and Smyth \cite{DubSmyth, DubSmyth2} were the first to study a modified version of the Mahler measure which gives rise to a metric on $\algt/\algt_\tors$.
A point $(\alpha_1,\alpha_2,\ldots,\alpha_N)\in (\algt)^N$ is called a {\it product representation of $\alpha$} if $\alpha = \prod_{n=1}^N \alpha_n$,
and we write $\mathcal P(\alpha)$ to denote the set of all product representations of $\alpha$.  Dubickas and Smyth defined the {\it metric Mahler measure} by
\begin{equation} \label{m1}
	m_1(\alpha) = \inf\left\{ \sum_{n=1}^N m(\alpha_n): (\alpha_1,\alpha_2,\ldots,\alpha_N)\in \mathcal P(\alpha)\right\}.
\end{equation}
It is verified in \cite{DubSmyth2} that $m_1:\algt\to [0,\infty)$ satisfies the following key properties:
\begin{enumerate}[(i)]
	\item\label{Mod} $m_1(\alpha) = m_1(\zeta\alpha)$ for all $\alpha\in \algt$ and $\zeta\in \algt_\tors$
	\item\label{Inverse} $m_1(\alpha) = m_1(\alpha^{-1})$ for all $\alpha \in \algt$
	\item\label{Tri} $m_1(\alpha\beta) \leq m_1(\alpha) + m_1(\beta)$ for all $\alpha,\beta\in \algt$.
\end{enumerate}
These facts combine to ensure that $(\alpha,\beta) \mapsto m_1(\alpha\beta^{-1})$ is a well-defined metric on $\algt/\algt_\tors$ which
induces the discrete topology if and only if Lehmer's conjecture is true.

The author \cite{SamuelsCollection, SamuelsParametrized,SamuelsMetric} extended the metric Mahler measure to form a parametrized family of metric Mahler measures.
If $\bar\alpha = (\alpha_1,\alpha_2,\ldots,\alpha_N)\in \mathcal P(\alpha)$ then we define the {\it measure function of $\bar\alpha$} to be the map $f_{\bar\alpha}:(0,\infty) \to [0,\infty)$
given by
\begin{equation*}
	f_{\bar\alpha}(t) = \left( \sum_{n=1}^N m(\alpha_n)^t\right)^{1/t}.
\end{equation*}
The {\it $t$-metric Mahler measure} of $\alpha$ is defined to be
\begin{equation*} \label{mt}
	m_t(\alpha) = \inf\left\{ f_{\bar\alpha}(t): \bar\alpha \in \mathcal P(\alpha) \right\}
\end{equation*}
and we note that $m_1(\alpha)$ agrees with the definition provided by Dubickas and Smyth in \eqref{m1}.  Properties \eqref{Mod} and \eqref{Inverse} continue to hold with $m_t$ in place of $m_1$,
however, the analog of \eqref{Tri} is that $$m_t(\alpha\beta)^t \leq m_t(\alpha)^t + m_t(\beta)^t$$ for all $\alpha,\beta\in \algt$ and all $t > 0$.  As a result, $(\alpha,\beta) \mapsto m_t(\alpha\beta^{-1})^t$ 
defines a metric on $\algt/\algt_\tors$ which induces the discrete topology if and only if Lehmer's conjecture is true.

The definition of $m_t(\alpha)$ requires examining the infinite collection $\mathcal P(\alpha)$, however, the main result of \cite{SamuelsMetric} gives us hope for a dramatic simplification.

\begin{thm}\label{FiniteInfimum}
	If $\alpha$ is an algebraic number then there exists a finite set $\mathcal X\subseteq \mathcal P(\alpha)$ such that $m_t(\alpha) =  \min\left\{ f_{\bar\alpha}(t): \bar\alpha \in \mathcal X \right\}$
	for all $t > 0$.
\end{thm}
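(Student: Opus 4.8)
The plan is to replace the infinite search over $\mathcal P(\alpha)$ by a \emph{finite} family of product representations that is cofinal from below. Since
$$f_{\bar\alpha}(t) = \left(\sum_{n=1}^N m(\alpha_n)^t\right)^{1/t}$$
depends only on the multiset $\{m(\alpha_1),\dots,m(\alpha_N)\}$, it suffices to produce finitely many representations $\bar\beta_1,\dots,\bar\beta_k\in\mathcal P(\alpha)$ such that for every $\bar\alpha\in\mathcal P(\alpha)$ some $\bar\beta_j$ satisfies $f_{\bar\beta_j}(t)\le f_{\bar\alpha}(t)$ for \emph{all} $t>0$; then $\mathcal X=\{\bar\beta_1,\dots,\bar\beta_k\}$ does the job. (If $\alpha$ is a root of unity, $m(\alpha)=0$ and $\mathcal X=\{(\alpha)\}$ works trivially, so assume $m(\alpha)>0$.)

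I would then discard two kinds of manifestly non-optimal representations. First, the trivial representation $(\alpha)\in\mathcal P(\alpha)$ has measure function the constant $m(\alpha)$, so if $\bar\alpha$ has a factor with $m(\alpha_n)\ge m(\alpha)$ then $f_{\bar\alpha}(t)\ge m(\alpha_n)\ge m(\alpha)$ for all $t$ and $\bar\alpha$ is dominated by $(\alpha)$; hence we may assume every factor satisfies $m(\alpha_n)<m(\alpha)$. Second, if two factors $\alpha_i,\alpha_j$ satisfy $m(\alpha_i\alpha_j)\le\max\{m(\alpha_i),m(\alpha_j)\}$, then merging them into the single factor $\alpha_i\alpha_j$ yields a representation of $\alpha$ for which $\sum_n m(\alpha_n)^t$ is weakly smaller for every $t>0$, hence with a pointwise smaller measure function; iterating, we may restrict to \emph{irreducible} representations admitting no such merge.

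The crux is the finiteness claim: only finitely many multisets arise from irreducible representations whose factors all have $m(\alpha_n)<m(\alpha)$. Granting this, the theorem follows immediately — a finite set of multisets is its own finite cofinal subfamily, and uniformity over $t$ is automatic since one is then comparing a fixed finite list of continuous functions of $t$. To prove the claim I would control both the individual factors and their number via the constraint $\prod_n\alpha_n=\alpha$. Read one place at a time, this constraint bounds the non-archimedean valuations of each factor in terms of the finitely many places at which $\alpha$ is not a unit, while $m(\alpha_n)<m(\alpha)$ bounds the archimedean size of each factor; together these confine the factors, up to roots of unity, to a finite set by a Northcott-type argument carried out in a fixed finite extension of $\rat(\alpha)$. (For $\alpha\in\rat^\times$ one first checks the factors may be taken rational, after which the bookkeeping is elementary: each prime dividing the numerator or denominator of $\alpha$ contributes a bounded exponent, partitionable among the factors in finitely many ways.) Irreducibility then prevents a factor from recurring uncontrollably, bounding the number of factors in terms of $\alpha$ and finishing the count.

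The main obstacle is precisely this finiteness claim, and within it the bound on the \emph{number} of factors of an irreducible representation — that is, ruling out that $m_t(\alpha)$ is approached, for each $t$ separately, along representations with ever more factors. The delicate point is the quantifier ``for all $t>0$'': a family of representations can each be individually dominated by a simpler one yet collectively accumulate at $m_t(\alpha)$ along a $t$-dependent subsequence, so one cannot fix $t$ and truncate. The resolution is to show that admissibility (archimedean size bounded by $m(\alpha)$, support bounded by that of $\alpha$, irreducibility) already pins the possible measure multisets down to a finite list independent of $t$; once that list is finite, passing to the minimum of the associated functions is harmless, and that is what yields a single set $\mathcal X$ valid for every $t$ simultaneously.
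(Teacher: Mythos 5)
A preliminary remark: this paper does not prove Theorem \ref{FiniteInfimum} at all --- it is imported as the main result of \cite{SamuelsMetric} --- so there is no in-paper proof to compare yours against, and your attempt must stand on its own. Your reduction steps are individually sound: discarding any $\bar\alpha$ containing a factor with $m(\alpha_n)\ge m(\alpha)$, and merging a pair with $m(\alpha_i\alpha_j)\le\max\{m(\alpha_i),m(\alpha_j)\}$, each produce domination that is uniform in $t$, and a finite dominating family would indeed finish the proof exactly as you say.

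The gap is the finiteness claim, which you rightly call the crux but never actually establish; your final paragraph restates it as its own resolution, which is circular at the decisive step. The proposed Northcott argument ``in a fixed finite extension of $\rat(\alpha)$'' does not apply, because nothing in the definition of $\mathcal P(\alpha)$ confines the factors to any fixed number field: a factor $\beta$ may be an arbitrary element of $\algt$, and the set $\{\beta\in\algt: 0<m(\beta)<C\}$ is infinite with infinitely many distinct measure values (Salem numbers alone give infinitely many values below $\log 2$), since $m(\beta)$ bounds the product of height and degree but not the degree itself. Thus even the set of attainable measure \emph{multisets} is not visibly finite under your admissibility conditions. The claim that the non-archimedean valuations of each factor are controlled by the places of $\alpha$ also fails in general because of cancellation between factors (e.g.\ $2=\tfrac{2p}{p}$); it is rescued only by the measure bound, and only after the factors are known to lie in a fixed field. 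That reduction --- restricting $\mathcal P(\alpha)$ to representations drawn from a finitely generated group built from the conjugates of $\alpha$ via a field-norm argument --- is precisely the hard content of \cite{SamuelsMetric}, and its rational special case (``the factors may be taken rational'') is itself a nontrivial theorem of Dubickas--Smyth and \cite{JankSamuels}, not a routine check. Finally, irreducibility does not bound the number of factors: $(2,2,\ldots,2)\in\mathcal P(2^k)$ admits no merge in your sense, so the length bound also requires a genuine argument (the blow-up of $f_{\bar\alpha}(t)$ as $t\to 0^+$ for representations with many factors of positive measure is the natural tool, but it must be made uniform).
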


Although Theorem \ref{FiniteInfimum} certainly implies that the infimum in $m_t(\alpha)$ is attained for all $t$ (an assertion that the author proved earlier in \cite{SamuelsInfimum}), its primary
value is that the infimum attaining points may all be chosen from a finite set which is independent of $t$.  Nevertheless, we caution the reader that the proof of Theorem \ref{FiniteInfimum} provides no 
method for determining a particular set $\mathcal X$ which satisfies its conclusion, and in general, it remains open to provide formula for such a set in terms of $\alpha$. 
For further study of this vague problem, it will be useful to provide two additional definitions.

\begin{enumerate}[(i)]
	\item We say that a positive real number $t$ is {\it standard for $\alpha$} if there exists $\bar\alpha\in \mathcal P(\alpha)$ and an open neighborhood $U$ of $t$ such that
		$m_t(\alpha) = f_{\bar\alpha}(t)$ for all $t\in U$.  
	\item Any point which fails to be standard for $\alpha$ is called {\it exceptional for $\alpha$}.
\end{enumerate}

Roughly speaking, the standard points are those points where the map $t\mapsto m_t(\alpha)$ matches the behavior of a measure function, while the exceptional points are those where it differs.
Since the behavior of a measure function is easily understood (it is simply the norm of a vector with real entries), the map $t\mapsto m_t(\alpha)$ may only exhibit unusual behavior at an exceptional point.
For example, we established in \cite{SamuelsCollection} that $t$ is standard if and only if $t\mapsto m_t(\alpha)$ is infinitely differentiable at $t$.
It follows from Theorem \ref{FiniteInfimum} that exceptional points are rather sparse.

\begin{cor} \label{FiniteExceptional}
	Every algebraic number has finitely many exceptional points.
\end{cor}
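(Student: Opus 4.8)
The plan is to deduce the corollary directly from Theorem \ref{FiniteInfimum} by viewing $t\mapsto m_t(\alpha)$ as the pointwise minimum of finitely many functions of exponential type. Fix a finite set $\mathcal X = \{\bar\alpha^{(1)},\dots,\bar\alpha^{(k)}\}\subseteq\mathcal P(\alpha)$ with $m_t(\alpha)=\min_{1\le i\le k}f_{\bar\alpha^{(i)}}(t)$ for all $t>0$, write $\bar\alpha^{(i)}=(\alpha^{(i)}_1,\dots,\alpha^{(i)}_{N_i})$, and delete from each $\bar\alpha^{(i)}$ the coordinates of Mahler measure $0$ (roots of unity, by Kronecker's Theorem), which alters none of the sums below. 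I would then work with the functions
\[
	g_i(t)=f_{\bar\alpha^{(i)}}(t)^t=\sum_n m(\alpha^{(i)}_n)^t,
\]
each a finite sum $\sum_m c_m e^{\lambda_m t}$ with $c_m>0$ and $\lambda_m\in\real$. Since $u\mapsto u^{1/t}$ is strictly increasing on $[0,\infty)$ for every $t>0$, the index realizing $\min_i f_{\bar\alpha^{(i)}}(t)$ is the same as the one realizing $\min_i g_i(t)$, so it suffices to analyze $t\mapsto\min_i g_i(t)$. Discarding duplicates, I may also assume $g_i\not\equiv g_j$ whenever $i\ne j$, since deleting an element of $\mathcal X$ affects neither the minimum nor the inclusion $\mathcal X\subseteq\mathcal P(\alpha)$.

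The key input is the classical bound that a nonzero real exponential sum $\sum_{m=1}^r d_m e^{\nu_m t}$ with distinct $\nu_m$ has at most $r-1$ real zeros; this follows by a standard induction on $r$ via Rolle's Theorem (divide by $e^{\nu_1 t}$ and differentiate to drop to $r-1$ terms). Each difference $g_i-g_j$ is again such a sum, and it is not identically $0$ by our normalization, so it has only finitely many real zeros. Hence
\[
	S=\bigcup_{1\le i<j\le k}\{\,t>0:g_i(t)=g_j(t)\,\}
\]
is a \emph{finite} set, and I claim every $t_0\in(0,\infty)\setminus S$ is standard for $\alpha$. Indeed, at such a $t_0$ the numbers $g_1(t_0),\dots,g_k(t_0)$ are pairwise distinct, so a unique index $i_0$ attains $\min_i g_i(t_0)$; by continuity of the finitely many $g_j$, there is an open neighborhood $U\ni t_0$ on which $g_{i_0}(t)<g_j(t)$ for every $j\ne i_0$. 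Then $m_t(\alpha)=(\min_i g_i(t))^{1/t}=g_{i_0}(t)^{1/t}=f_{\bar\alpha^{(i_0)}}(t)$ for all $t\in U$, and since $\bar\alpha^{(i_0)}\in\mathcal P(\alpha)$ this exhibits $t_0$ as standard. Therefore every exceptional point for $\alpha$ lies in the finite set $S$.

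The only step requiring genuine care is the finiteness of $S$ — as opposed to its mere discreteness — since a priori the exceptional points could accumulate at $0^+$ or at $+\infty$; it is precisely the uniform zero-count for exponential sums that forbids this, and I expect that to be the crux. Everything else is bookkeeping around Theorem \ref{FiniteInfimum}. (In particular, when $\alpha$ is a root of unity one has $m_t(\alpha)\equiv 0$, some $g_i$ vanishes identically while the others are strictly positive everywhere, and the argument then shows that there are no exceptional points at all.)
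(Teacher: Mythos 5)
Your proof is correct. The paper itself offers no written proof of Corollary \ref{FiniteExceptional} --- it is asserted as an immediate consequence of Theorem \ref{FiniteInfimum} --- so what you have done is supply the missing deduction, and you do it along the natural lines: $m_t(\alpha)$ is the pointwise minimum of the finitely many functions $f_{\bar\alpha^{(i)}}$, every point that is not a crossing point of two of the $g_i=f_{\bar\alpha^{(i)}}(\cdot)^t$ is standard by continuity, and the crossing set is finite. The one place where your route differs from the paper's own toolkit is the finiteness mechanism: where you invoke the Rolle-induction bound that a nonzero exponential sum with $r$ distinct exponents has at most $r-1$ real zeros, the paper (in the analogous Lemma \ref{MeasureFunctionsSwitch}) first separates two distinct measure functions asymptotically as $t\to\infty$ and then applies the identity theorem to the entire functions $t\mapsto f_\xx(t)^t$. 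Both are sound; yours is the more elementary real-variable argument and has the small bonus of an explicit upper bound (roughly $\binom{k}{2}$ times the number of distinct Mahler measures involved) on the number of exceptional points, while the paper's analyticity argument is the one it reuses elsewhere. Your bookkeeping is also in order: discarding coordinates of Mahler measure zero leaves each $g_i$ unchanged since $0^t=0$ for $t>0$ (though in the final display you should cite the original tuple $\bar\alpha^{(i_0)}\in\mathcal P(\alpha)$ rather than the pruned one, whose product may differ from $\alpha$ by a root of unity --- this costs nothing since the two tuples have the same measure function), and the degenerate case $g_i\equiv 0$ occurs only when $\alpha$ is a root of unity, where indeed every $t$ is standard.
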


Corollary \ref{FiniteExceptional} comes equipped with a similar caveat as Theorem \ref{FiniteInfimum}.  Although we know there are finitely many exceptional points, the proof of 
Corollary \ref{FiniteExceptional} provides no general strategy for listing those points, nor does it suggest a strategy for estimating how many such points there are.  
This discussion leads to the following motivating problem.

\begin{question} \label{ExceptionalCounting}
	For every integer $k\geq 0$ does there exist an algebraic number having $k$ exceptional points?
\end{question}

We shall address Question \ref{ExceptionalCounting} by considering a special case of rational numbers studied in \cite{SamuelsFibonacci}.  For this purpose, let
$\{h_i\}_{i=0}^\infty$ be the Fibonacci sequence defined so that $h_0 = 0$ and $h_1= 1$.  Further let $N\geq 3$ be an integer and select primes $p$ and $q$ such that
\begin{equation} \label{WeakCompatible}
	\frac{h_N}{h_{N-1}} < \frac{\log q}{\log p} < \frac{h_{N-1}}{h_{N-2}}\quad\mbox{or} \quad \frac{h_{N-1}}{h_{N-2}} < \frac{\log q}{\log p} < \frac{h_N}{h_{N-1}}.
\end{equation}
Using the fact that $[x,2x]$ contains a prime for all $x \geq 1$ (see \cite{Cheby}), it can be shown that expressions of the form $\log q/\log p$ are dense in $(0,\infty)$, and hence, we are certain
that there exist primes satisfying \eqref{WeakCompatible}.  Many future definitions in this article depend on the choices of $N$, $p$ and $q$.  However, in order to prevent our notation from becoming 
excessively cumbersome, we shall often suppress this dependency in that notation.  The only exception to this convention is Section \ref{FirstProofs} where we will need to be more cautious with our notation.

We define the linear transformation $A:\real^N\to \real^2$ using the $2\times N$ matrix
\begin{equation*}
		A = \left( \begin{array}{cccc} h_1 & h_2 & \cdots & h_{N} \\ h_0 & h_1 & \cdots & h_{N-1} \end{array} \right).
\end{equation*}
It is easily verified that the rows of $A$ are linearly independent over $\real$, which implies that $A$ is a surjection and $\dim_\real(\ker A) = N-2$.
We also write $\nat_0^N = \{(x_1,x_2,\ldots,x_N)^T\in \real^N: x_i\in \intg,\ x_i \geq 0\}$, and 
if $n$ is an integer with $1\leq n\leq N$, then we define
\begin{equation} \label{VDef}
	\mathcal V_n = \left\{ \xx\in \nat_0^N: A\xx = \begin{pmatrix} h_n \\ h_{n-1} \end{pmatrix} \right\}.
\end{equation}
The elements of $\mathcal V_n$ are technically column vectors, however, for ease of notation, we shall often write them as row vectors.
As we shall not discuss elements in the dual of $\real^N$ in this paper, this notation will not create any ambiguity.

Obviously $\mathcal V_n$ is finite, and if $(x_1,x_2,\ldots,x_N)\in \mathcal V_n$ then $x_i = 0$ for all $n< i \leq N$.  Therefore, while $\mathcal V_n$ 
certainly depends on $N$, replacing $N$ by different value on the right hand side of \eqref{VDef} while keeping $n$ fixed, we simply attach or remove a list of $0$'s from the tail of each point 
in $\mathcal V_n$.
Still assuming that $1\leq n\leq N$, we define $$\alpha_n = \frac{p^{h_n}}{q^{h_{n-1}}}.$$  Each point in $\mathcal V_n$ is associated to a product representation of $\alpha_n$
via the map $\omega:\mathcal V_n\to\mathcal P(\alpha_n)$ given by
\begin{equation*}
	\omega((x_1,x_2,\ldots,x_n)) = \left( \underbrace{ \frac{p^{h_1}}{q^{h_0}},\cdots,\frac{p^{h_1}}{q^{h_0}}}_{x_1\mbox{ times}},  
			 \underbrace{\frac{p^{h_2}}{q^{h_1}},\cdots,\frac{p^{h_2}}{q^{h_1}}}_{x_2\mbox{ times}}, \cdots\cdots,
			  \underbrace{\frac{p^{h_N}}{q^{h_{N-1}}},\cdots,\frac{p^{h_N}}{q^{h_{N-1}}}}_{x_N\mbox{ times}} \right).
\end{equation*}
The {\it measure function} of a point $\xx\in \mathcal V_n$ is simply defined to be the measure function of $\omega(\xx)$, and moreover, we shall write $f_\xx(t) = f_{\omega(\xx)}(t)$
for all $t>0$.  As a result, we obtain that
\begin{equation*}
	f_\xx(t) = \left(\sum_{i=1}^N x_i m\left(\frac{p^{h_i}}{q^{h_{i-1}}}\right)^t\right)^{1/t} = \left(\sum_{i=1}^n x_i m\left(\frac{p^{h_i}}{q^{h_{i-1}}}\right)^t\right)^{1/t},
\end{equation*}
where we deduce the second equality from our observation following \eqref{VDef}.  The main result of \cite{SamuelsFibonacci} shows that $m_t(\alpha_n)$ may be computed
by considering only points in $\mathcal V_n$.

\begin{thm} \label{PreviousMain}
	Suppose that $N\geq 3$ is an integer and $(p,q)$ is pair of primes satisfying \eqref{WeakCompatible}.  If $1\leq n\leq N$ then 
	$m_t(\alpha_n) = \min\{f_\xx(t):\xx\in \mathcal V_n\}$ for all $t > 0$.
\end{thm}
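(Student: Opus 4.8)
\emph{Proof strategy.}
Since column $n$ of $A$ equals $(h_n,h_{n-1})^{T}$, the $n$-th standard basis vector lies in $\mathcal V_n$, so $\mathcal V_n$ is a non-empty finite set; for each $\xx\in\mathcal V_n$ the tuple $\omega(\xx)$ is a product representation of $\alpha_n$, whence $m_t(\alpha_n)\le f_{\omega(\xx)}(t)=f_\xx(t)$ and therefore $m_t(\alpha_n)\le\min\{f_\xx(t):\xx\in\mathcal V_n\}$. For the reverse inequality I would fix $t>0$ and show that every $\bar\beta=(\beta_1,\dots,\beta_M)\in\mathcal P(\alpha_n)$ admits some $\xx\in\mathcal V_n$ with $f_\xx(t)\le f_{\bar\beta}(t)$; this yields $\min\{f_\xx(t):\xx\in\mathcal V_n\}\le m_t(\alpha_n)$ and finishes the proof.

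The first half of the plan is to normalise $\bar\beta$ without increasing $f_{\bar\beta}(t)$. Using $m(\zeta\beta)=m(\beta)$ for roots of unity $\zeta$, $m(\beta^{-1})=m(\beta)$, and the fact that on $\rat^{\times}$ the Mahler measure decomposes as a sum of contributions over the rational primes, one first reduces to the case in which every $\beta_j$ is a positive rational; I would isolate this as a preliminary lemma (it is in the spirit of the rational-number computations of Dubickas and Smyth \cite{DubSmyth2}). Next, for a non-zero rational $\beta$ and a prime $\ell$, replacing $\beta$ by $\beta\ell^{-v_\ell(\beta)}$ deletes a power of $\ell$ from whichever of the numerator or denominator of $\beta$ contains it, and so does not increase $m(\beta)=\log\max\{|\mathrm{num}(\beta)|,|\mathrm{denom}(\beta)|\}$. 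Since $\sum_j v_\ell(\beta_j)=v_\ell(\alpha_n)=0$ for every prime $\ell\notin\{p,q\}$, carrying out this replacement simultaneously on $\beta_1,\dots,\beta_M$ preserves the product $\alpha_n$ and does not increase $f_{\bar\beta}(t)$; after finitely many passes each $\beta_j=p^{A_j}q^{-B_j}$ is a $(p,q)$-monomial. Finally, a factor with $v_p(\beta_j)<0$ can be improved by shifting a power of $p$ to it from a factor with $v_p>0$ — an operation that removes a $p$ from a denominator and a $p$ from a numerator, hence decreases both measures — and iterating this (together with the analogous adjustment at $q$) produces a representation with $v_p(\beta_j)\ge 0$ and $v_q(\beta_j)\le 0$ for all $j$. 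For such a representation $m(\beta_j)=\max\{A_j\log p,B_j\log q\}$, where $A_j=v_p(\beta_j)\ge 0$, $B_j=-v_q(\beta_j)\ge 0$, and $\sum_j A_j=h_n$, $\sum_j B_j=h_{n-1}$.

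With these reductions the statement becomes purely combinatorial: given integers $A_j,B_j\ge 0$ with $\sum_j A_j=h_n$ and $\sum_j B_j=h_{n-1}$, one must produce $\xx\in\mathcal V_n$ with
\[
\sum_{i=1}^{n} x_i\max\{h_i\log p,\,h_{i-1}\log q\}^{t}\ \le\ \sum_{j=1}^{M}\max\{A_j\log p,\,B_j\log q\}^{t}.
\]
This is the step that uses hypothesis \eqref{WeakCompatible}. Writing $\theta=\log q/\log p$, the hypothesis squeezes $\theta$ strictly between the consecutive Fibonacci ratios $h_{N-1}/h_{N-2}$ and $h_N/h_{N-1}$, which straddle the golden ratio; since the ratios $h_i/h_{i-1}$ approach the golden ratio alternately from below (even $i$) and from above (odd $i$), this pins down, for each $i\le N$, which of $h_i\log p$ or $h_{i-1}\log q$ realises $m(p^{h_i}/q^{h_{i-1}})$ and, crucially, singles out the vectors $(h_i,h_{i-1})$ as the efficient generators for the sublinear gauge $(a,b)\mapsto\max\{a,b\theta\}$ on the lattice $\intg^2$. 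I would then decompose a general admissible multiset $\{(A_j,B_j)\}$ into Fibonacci atoms by repeatedly invoking the recursion $(h_{i+1},h_i)=(h_i,h_{i-1})+(h_{i-1},h_{i-2})$, bounding the right-hand side above via the elementary inequalities $(u+v)^{t}\le u^{t}+v^{t}$ for $0<t\le 1$ and $(u+v)^{t}\ge u^{t}+v^{t}$ for $t\ge 1$.

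I expect this last combinatorial step to be the principal obstacle. Its delicacy is that the optimal decomposition, and even the collection $\mathcal V_n$ of decompositions that are ever optimal, varies with $t$, so one must verify that the Fibonacci atoms are simultaneously the cheapest building blocks for every exponent $t>0$ and that \eqref{WeakCompatible} is exactly the arithmetic condition on the pair $(p,q)$ that secures this — for instance by identifying the extremal faces of the associated optimisation problem and checking that they are governed by the Fibonacci recursion. By contrast, the normalisation steps — reduction to positive rationals, stripping of extraneous primes, and the sign adjustment — are routine, if slightly tedious, bookkeeping.
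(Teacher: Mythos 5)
First, a point of reference: this paper does not prove Theorem \ref{PreviousMain} at all --- it is imported verbatim as the main result of \cite{SamuelsFibonacci} --- so there is no in-paper proof to compare against. Judged on its own terms, your proposal gets the easy direction right (each $\xx\in\mathcal V_n$ yields $\omega(\xx)\in\mathcal P(\alpha_n)$, hence $m_t(\alpha_n)\le\min\{f_\xx(t):\xx\in\mathcal V_n\}$) and correctly identifies the architecture of the hard direction, but it contains a genuine and decisive gap.

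The gap is precisely the step you flag as ``the principal obstacle'': showing that for \emph{every} $t>0$ and every family $(A_j,B_j)$ of non-negative integers with $\sum_j A_j=h_n$ and $\sum_j B_j=h_{n-1}$, some $\xx\in\mathcal V_n$ satisfies $f_\xx(t)^t\le\sum_j\max\{A_j\log p,\,B_j\log q\}^t$. This is not a technical afterthought; it is the entire content of the theorem, and the tools you name do not deliver it. The inequality $(u+v)^t\le u^t+v^t$ holds only for $0<t\le 1$ and reverses for $t\ge 1$, so ``decomposing into Fibonacci atoms via the recursion'' pushes the right-hand side in opposite directions in the two regimes; one needs an argument that is uniform in $t$, and this is exactly where \eqref{WeakCompatible} and the continued-fraction structure of $\log q/\log p$ must do real work (one must show that any exponent pair not of the form $(h_i,h_{i-1})$ can be replaced by a combination of such atoms without increasing the cost for any $t$, which is the substance of \cite{SamuelsFibonacci}). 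As written, your proposal asserts that this can be done rather than doing it. Two smaller caveats: the reduction from arbitrary algebraic factors to positive rational factors is itself a nontrivial theorem (essentially Dubickas--Smyth \cite{DubSmyth2} and \cite{JankSamuels}, not routine bookkeeping), though it is at least available to cite; and in the sign-adjustment step, moving a power of $p$ between two factors does not ``decrease both measures'' --- since $m(r/s)=\log\max\{|r|,|s|\}$ may be realized by the unaffected term, it merely does not increase them, which is all you need but should be stated correctly.
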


The significance of Theorem \ref{PreviousMain} is that it substantially restricts the collection of product representations we need to search in order to evaluate $m_t(\alpha_n)$.  Indeed,
there are product representations of $\alpha_n$ which use any particular integer power.  However, Theorem \ref{PreviousMain} shows that we need only consider those
which use exponent pairs of the form $(h_i,h_{i-1})$ for $1\leq i\leq N$.

Our goal for this article is to address Question \ref{ExceptionalCounting} by counting exceptional points for $\alpha_n$.
As part of this process, it will be useful to be able to replace $\mathcal V_n$ in Theorem \ref{PreviousMain} by a significantly smaller set.
In Section \ref{ConjecturedReplace}, we pose a conjecture (Conjecture \ref{MainConj}) identifying a particular set $\mathcal S_n$ which we believe satisfies
\begin{equation} \label{PreliminaryConj}
	m_t(\alpha_n) = \min\{f_\xx(t):\xx\in \mathcal S_n\}.
\end{equation}  
We show that if Conjecture \ref{MainConj} is true then it is best possible\footnote{{\it Best possible} means that the set $\mathcal S_n$ on the right hand side of \eqref{PreliminaryConj}
cannot be replaced with a smaller set while still maintaining equality.  See Theorem \ref{ConjectureConsequences}\eqref{Best} for the more rigorous version of this statement.}, 
and moreover, it resolves Question \ref{ExceptionalCounting} in the affirmative.
We utilize Section \ref{Progress} to discuss our progress in the direction of Conjecture \ref{MainConj} including various computational results which resolve the conjecture for $N\leq 13$.
As part of that progress, we show that Conjecture \ref{MainConj} may be reduced to the study of a particular subset of $\mathcal V_n$.  This discussion relates Question \ref{ExceptionalCounting}
to several problems on the behavior of the Fibonacci Sequence.  Following these discussions, we provide the proofs of all results in the subsequent three sections.


\section{Conjectured Replacement for $\mathcal V_n$.} \label{ConjecturedReplace}

For the purposes of this section, we remind the reader that all definitions depend on the choices of $N$ and $(p,q)$ even though we shall often suppress this dependency in our notation.
As noted in the previous section, we shall define a particular subset of $\mathcal V_n$ and conjecture that this subset can replace $\mathcal V_n$ in Theorem \ref{PreviousMain}.
Before we can do so, we will need to impose an additional restriction on the pair of primes $(p,q)$ beyond that which appears in \eqref{WeakCompatible}.  
This discussion begins with the following preliminary observation.

\begin{prop} \label{UniqueIntersection}
	Suppose that $n\geq 3$ then there exists a unique positive real number $t$ such that
	\begin{equation*}
		m\left( \frac{p^{h_n}}{q^{h_{n-1}}}\right)^t = m\left( \frac{p^{h_{n-1}}}{q^{h_{n-2}}}\right)^t  + m\left( \frac{p^{h_{n-2}}}{q^{h_{n-3}}}\right)^t,
	\end{equation*}
	and moreover, $t \geq 1$.
\end{prop}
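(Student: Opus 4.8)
The plan is to reduce the statement to a one-variable analysis of the function
\[
g(t) = m\!\left( \frac{p^{h_{n-1}}}{q^{h_{n-2}}}\right)^t + m\!\left( \frac{p^{h_{n-2}}}{q^{h_{n-3}}}\right)^t - m\!\left( \frac{p^{h_n}}{q^{h_{n-1}}}\right)^t,
\]
and to show that $g$ has a unique zero in $(0,\infty)$, with that zero lying in $[1,\infty)$. First I would record the explicit values of the three Mahler measures using the formula $m(r/s) = \log\max\{|r|,|s|\}$ for coprime $r,s$: writing $a_j := m(p^{h_j}/q^{h_{j-1}}) = \max\{h_j\log p,\ h_{j-1}\log q\}$, the first alternative in \eqref{WeakCompatible}, namely $h_N/h_{N-1} < \log q/\log p < h_{N-1}/h_{N-2}$ (and symmetrically the second), pins down which of the two terms in each $\max$ dominates. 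Since $h_j/h_{j-1}$ is increasing in $j$ toward the golden ratio on odd steps and decreasing on even steps — more precisely the ratios alternate around $\varphi$ — the hypothesis forces, for each of $j \in \{n-2, n-1, n\}$ with $3 \le n \le N$, a definite choice, giving $a_j = h_j \log p$ or $a_j = h_{j-1}\log q$ consistently. I would then verify the two strict inequalities $a_n > a_{n-1}$, $a_n > a_{n-2}$ and $a_n < a_{n-1} + a_{n-2}$ at $t=1$; the first two are immediate from monotonicity of $h_j$, and the last is where the Fibonacci recursion $h_n = h_{n-1}+h_{n-2}$ enters, combined with the fact that \eqref{WeakCompatible} is exactly the condition making the linear form $h_n\log p$ (or $h_{n-1}\log q$) strictly smaller than the corresponding sum.

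With those inequalities in hand, the uniqueness argument is a standard convexity/monotonicity trick. Write $g(t) = b^t + c^t - a^t$ where $a = a_n$, $b = a_{n-1}$, $c = a_{n-2}$ are positive reals with $b,c < a$ and (at $t=1$) $b + c > a$. Factor out $a^t$: $g(t) = a^t\big( (b/a)^t + (c/a)^t - 1 \big) =: a^t \cdot \phi(t)$, and since $a^t > 0$ the zeros of $g$ coincide with the zeros of $\phi(t) = x^t + y^t - 1$ where $0 < x = b/a < 1$ and $0 < y = c/a < 1$. Now $\phi$ is strictly decreasing on $(0,\infty)$ (each of $x^t$, $y^t$ is strictly decreasing since $x,y\in(0,1)$), with $\phi(0^+) = 1 > 0$ and $\phi(t) \to -1 < 0$ as $t\to\infty$; hence $\phi$ has exactly one zero $t_0 \in (0,\infty)$. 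This gives existence and uniqueness at once.

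Finally, to see $t_0 \ge 1$, I would use that $\phi$ is strictly decreasing together with the sign of $\phi(1)$: from $b + c > a$ we get $x + y > 1$, i.e. $\phi(1) = x + y - 1 > 0$, so the unique zero $t_0$ must lie to the right of $1$, i.e. $t_0 \ge 1$ (in fact $t_0 > 1$). The one point requiring care — and the step I expect to be the main obstacle — is the clean verification that \eqref{WeakCompatible} yields both the consistent resolution of the three maxima \emph{and} the strict inequality $a_n < a_{n-1} + a_{n-2}$: one must check that the same branch of \eqref{WeakCompatible} (say the first) simultaneously controls the ratios at indices $n-2, n-1, n$, which relies on the classical fact that consecutive Fibonacci ratios $h_j/h_{j-1}$ form two monotone interlacing subsequences converging to $\varphi$, so that any value of $\log q/\log p$ trapped between $h_N/h_{N-1}$ and $h_{N-1}/h_{N-2}$ is automatically trapped on the correct side of $h_n/h_{n-1}$ and $h_{n-1}/h_{n-2}$ for all $n \le N$. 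Once that bookkeeping is done, substituting $h_n = h_{n-1} + h_{n-2}$ reduces $a_n < a_{n-1} + a_{n-2}$ to a tautology in each branch, and the proof concludes.
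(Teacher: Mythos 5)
Your proposal is correct, and its existence--uniqueness skeleton is the same as the paper's: the paper compares the strictly decreasing function $g(t)=\bigl(b^t+c^t\bigr)^{1/t}$ (with $b,c$ the two right-hand Mahler measures) to the constant $a=m(p^{h_n}/q^{h_{n-1}})$, using $\lim_{t\to 0^+}g=\infty$ and $\lim_{t\to\infty}g=\max\{b,c\}<a$; this is just a reparametrization of your $\phi(t)=(b/a)^t+(c/a)^t-1$. The genuine difference is in the step you flagged as the main obstacle, the verification that $a\le b+c$ at $t=1$. You propose to resolve each of the three maxima into a definite branch via \eqref{WeakCompatible} and the interlacing of the Fibonacci ratios, then check the inequality branch by branch. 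The paper sidesteps all of that bookkeeping with the elementary inequality
\[
\max\{A,B\}+\max\{C,D\}\;\ge\;\max\{A+C,\,B+D\},
\]
applied with $A=h_{n-1}\log p$, $B=h_{n-2}\log q$, $C=h_{n-2}\log p$, $D=h_{n-3}\log q$: the right-hand side collapses to $\max\{h_n\log p,\,h_{n-1}\log q\}=a$ by the Fibonacci recursion, with no case analysis and no use of \eqref{WeakCompatible} at this step. Your route buys explicit knowledge of which branch of each maximum is active (information the paper does exploit later, e.g.\ in the proof of Lemma \ref{TargetCompatible}), but for this proposition the one-line inequality is cleaner and I would adopt it. One small caution: your remark that $a>b$ and $a>c$ are ``immediate from monotonicity of $h_j$'' is not quite right at $n=3$, where $h_2=h_1$ forces you to invoke \eqref{WeakCompatible} (which gives $\log q<2\log p$) to get strictness of $a>b$; your max-resolution framework covers this, but it is worth stating explicitly since the strict inequality is exactly what makes $\phi(t)\to-1$ and hence guarantees a zero exists.
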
	

We shall write $t_n$ to denote the value of $t$ described in the conclusion of Proposition \ref{UniqueIntersection}.  
For an integer $N\geq 3$, we say that the ordered pair of primes $(p,q)$ is {\it compatible with $N$} if it satisfies \eqref{WeakCompatible} and
\begin{equation*}
	t_{N+1} < t_N < t_{N-1} < \cdots < t_4 < t_3.
\end{equation*}
Since this definition is rather exotic, we might be concerned that there exists $N\geq 3$ for which there is no compatible pair of primes.  Luckily, our next
result alleviates these concerns.

\begin{thm} \label{IntersectionWeaving}
	Suppose that $N\in \intg$ is such that $N\geq 3$.  There exists $\delta > 0$ such that if $p$ and $q$ are primes satisfying
	\begin{equation*}
		\left| \frac{\log q}{\log p} - \frac{1+\sqrt 5}{2}\right| < \delta
	\end{equation*}
	then $(p,q)$ is compatible with $N$.
\end{thm}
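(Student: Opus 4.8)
\emph{Proof strategy.} The plan is to reduce the statement to a question about the single real parameter $r=\log q/\log p$ and then analyze it near the golden ratio $\varphi=\tfrac{1+\sqrt5}{2}$; write $\psi=\tfrac{1-\sqrt5}{2}$, so that $h_k=(\varphi^k-\psi^k)/\sqrt5$, $\varphi\psi=-1$, $\varphi^2=\varphi+1$ and $\psi^2=\psi+1$. Since $p$ and $q$ are distinct primes, $m\bigl(p^{h_i}/q^{h_{i-1}}\bigr)=(\log p)\,a_i(r)$ with $a_i(r):=\max\{h_i,\,r\,h_{i-1}\}$, and dividing the equation of Proposition~\ref{UniqueIntersection} by $(\log p)^t$ shows that $t_n$ depends on $(p,q)$ only through $r$; likewise \eqref{WeakCompatible} is a condition on $r$ alone. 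So it suffices to find $\delta>0$ such that every real $r$ with $|r-\varphi|<\delta$ satisfies \eqref{WeakCompatible} and $t_{N+1}(r)<t_N(r)<\cdots<t_3(r)$, where $t_n(r)$ is the unique positive root of $a_n(r)^t=a_{n-1}(r)^t+a_{n-2}(r)^t$ (its existence and uniqueness for $r$ near $\varphi$ follow from $a_n(r)>\max\{a_{n-1}(r),a_{n-2}(r)\}$ together with the strict decrease of $t\mapsto(a_{n-1}/a_n)^t+(a_{n-2}/a_n)^t$). From $h_i/h_{i-1}=\varphi+\psi^{i-1}/h_{i-1}$ one gets $h_i/h_{i-1}>\varphi$ for odd $i$ and $<\varphi$ for even $i$, so $\varphi$ lies strictly between the consecutive convergents $h_{N-1}/h_{N-2}$ and $h_N/h_{N-1}$; hence choosing $\delta<\min_{3\le i\le N+1}\bigl|h_i/h_{i-1}-\varphi\bigr|$ forces \eqref{WeakCompatible} and, more importantly, makes $a_i(r)=h_i$ for odd $i$ and $a_i(r)=r\,h_{i-1}$ for even $i$ throughout $(\varphi-\delta,\varphi+\delta)$. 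With these affine formulas each $t_n$ solves an equation whose coefficients depend smoothly on $r$, so $t_n(r)$ is continuous there; it therefore remains only to verify the chain at $r=\varphi$ and then shrink $\delta$.

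At $r=\varphi$ we have $a_i(\varphi)=h_i$ for odd $i$, $a_i(\varphi)=\varphi h_{i-1}$ for even $i$, and the key structural identity $a_{i+1}(\varphi)=\varphi\,a_i(\varphi)$ for every odd $i$. I would first record that $t_n(\varphi)>1$ for all $n\ge3$: this is equivalent to $a_{n-1}(\varphi)+a_{n-2}(\varphi)>a_n(\varphi)$, which (using $\varphi^2-1=\varphi$, $\varphi-1=\varphi^{-1}$) reduces to $h_{n-1}/h_{n-2}<\varphi$ for $n$ odd and $h_{n-1}/h_{n-3}<\varphi^2$ for $n$ even, both noted above. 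Setting $g_m(t)=a_m(\varphi)^t-a_{m-1}(\varphi)^t-a_{m-2}(\varphi)^t$, whose sign changes from $-$ to $+$ at its unique zero $t_m(\varphi)$, the whole chain follows once one shows $g_{n+1}(t_n(\varphi))>0$ for $3\le n\le N$.

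For that step I would split on the parity of $n$. If $n$ is odd, then $a_{n+1}(\varphi)=\varphi\,a_n(\varphi)$ and $a_{n-1}(\varphi)=\varphi\,a_{n-2}(\varphi)$, and eliminating $a_n(\varphi)$ from $g_{n+1}(t_n(\varphi))$ via $a_n(\varphi)^{t_n}=a_{n-1}(\varphi)^{t_n}+a_{n-2}(\varphi)^{t_n}=a_{n-2}(\varphi)^{t_n}(\varphi^{t_n}+1)$ gives
\begin{equation*}
g_{n+1}(t_n(\varphi))=a_{n-2}(\varphi)^{t_n}\bigl(\varphi^{2t_n}-\varphi^{t_n}-1\bigr),
\end{equation*}
which is positive because $t_n(\varphi)>1$ forces $\varphi^{t_n}>\varphi$ and hence $\varphi^{2t_n}-\varphi^{t_n}-1>\varphi^2-\varphi-1=0$. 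If $n$ is even, then using $g_n(t_n(\varphi))=0$ to write $g_{n+1}(t_n(\varphi))=g_{n+1}(t_n(\varphi))-g_n(t_n(\varphi))$ and substituting the formulas yields
\begin{equation*}
g_{n+1}(t_n(\varphi))=(\varphi h_{n-1})^{t_n}\bigl(x^{t_n}+y^{t_n}-2\bigr),\qquad x=\frac{h_{n+1}}{\varphi h_{n-1}},\quad y=\frac{h_{n-3}}{h_{n-1}},
\end{equation*}
so it remains to prove $x^t+y^t>2$ for all $t\ge1$. Here $x>1$ and $0<y<1$ are elementary, so for $t\ge1$ one has $\tfrac{d}{dt}(x^t+y^t)=x^t\ln x+y^t\ln y\ge x\ln x+y\ln y$, and a crude estimate (e.g. $x\ge5/(2\varphi)$ and $\varphi^{-2}\le y\le\tfrac12$) shows the right side is positive; thus $x^t+y^t$ is increasing on $[1,\infty)$ and it suffices to treat $t=1$. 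There, $x+y>2$ is equivalent to $h_{n+1}+\varphi h_{n-3}>2\varphi h_{n-1}$, and expanding with Binet's formula the $\varphi$-powers contribute $\varphi^{\,n-2}(\varphi^3-2\varphi^2+1)=0$ while the $\psi$-powers contribute $\psi^{\,n-3}(2\sqrt5-5)$; since $n-3$ is odd we have $\psi^{n-3}<0$, and $2\sqrt5-5<0$, so this is positive, completing the even case.

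Finally, $t_{N+1}(\varphi)<\cdots<t_3(\varphi)$ is a finite list of strict inequalities among continuous functions of $r$, so it persists on some $(\varphi-\delta',\varphi+\delta')$ with $0<\delta'\le\delta$; combined with \eqref{WeakCompatible} this shows that every real $r$ there—hence $r=\log q/\log p$ for any primes $p,q$ with $|\log q/\log p-\varphi|<\delta'$—gives a pair compatible with $N$. I expect the main obstacle to be the even-parity case: the leading $\varphi$-powers cancel identically ($\varphi^3-2\varphi^2+1=0$), so the sign of $x+y-2$ is governed entirely by the lower-order $\psi$-terms, and the Binet expansion must be organized carefully to pin it down; everything else is bookkeeping or elementary calculus.
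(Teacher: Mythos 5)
Your proof is correct, and its global architecture is the same as the paper's: pass to the limiting parameter $r=\log q/\log p=\phi$, prove the strict chain of inequalities there, and transfer back by continuity (your $t_n(\varphi)$ is exactly the paper's $s_n$ from Lemma \ref{TargetUniqueIntersection}). The two constituent steps, however, are executed quite differently. For the key inequality $s_n>s_{n+1}$, the paper argues by contradiction (Lemma \ref{TargetCompatible}): assuming $s_n\le s_{n+1}$, the odd case contradicts $h_{n-1}/h_{n-2}<\phi$, and the even case funnels into the inequality \eqref{FakeInequality}, which is shown to force $s_n=1$ and hence a contradiction. You instead prove the inequality directly by evaluating the sign of $g_{n+1}$ at $t_n(\varphi)$: your odd case collapses to the clean identity $a_{n-2}^{t_n}(\varphi^{2t_n}-\varphi^{t_n}-1)>0$ (which does require the strict inequality $t_n(\varphi)>1$, slightly sharper than the $t\ge 1$ stated in Lemma \ref{TargetUniqueIntersection}, but your verification of it is correct), and your even case reduces to $x^t+y^t>2$, settled by monotonicity in $t$ plus the Binet cancellation $\varphi^3-2\varphi^2+1=0$ at $t=1$ — the same three-term relation $h_{n+1}=3h_{n-1}-h_{n-3}$ that drives the paper's even case, but organized so that the conclusion is read off from the sign of $\psi^{n-3}(2\sqrt5-5)$ rather than from an equality analysis; I checked the constants ($x\ge 5/(2\varphi)$, $\varphi^{-2}<y\le\tfrac12$, and the resulting bound $x\ln x+y\ln y>0.3$) and they hold. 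For the continuity step, the paper's Lemma \ref{TApprox} uses a sequential compactness argument, including a separate proof that the sequence $\{r_k\}$ is bounded; you avoid this entirely by shrinking $\delta$ so that each maximum $\max\{h_i,rh_{i-1}\}$ resolves according to the parity of $i$, making $t_n(r)$ the root of an explicit equation with coefficients affine in $r$, from which continuity is immediate. Your route is therefore somewhat more elementary and self-contained, while the paper's compactness formulation of Lemma \ref{TApprox} is stated for the actual arithmetic quantities $t_n(p,q)$ and is reused rhetorically later (the remark that one may study $\{s_3,s_4,\ldots\}$ in place of $\{t_3(p,q),\ldots\}$); both proofs share the feature, which the paper explicitly values, of exhibiting $t_n(p,q)\to s_n$ as $\log q/\log p\to\phi$.
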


We recall that expressions of the form $\log q/\log p$ are dense in $(0,\infty)$.   Consequently, we know that for every $N\geq 3$, there exist infinitely many pairs of primes $(p,q)$ 
which are compatible with $N$.  Moreover, Theorem \ref{IntersectionWeaving} shows that we may locate such pairs of primes by looking near the golden ratio.

We now define a new set $\mathcal S_n \subseteq \mathcal V_n$ and we shall conjecture that Theorem \ref{PreviousMain} still holds even if $\mathcal V_n$ is replaced by
$\mathcal S_n$ in its statement.  For the purposes of this discussion, if $\xx = (x_1,x_2,\ldots,x_N)\in \mathcal V_n$ is such that $x_i = 0$ for all $i > k$, then we shall simply
write $\xx = (x_1,x_2,\ldots,x_k)$.  In particular, we may always write $\xx = (x_1,x_2,\ldots,x_n)$.  
We note that that this change of notation does not create ambiguity regarding the values of the measure functions $f_\xx(t)$ for any $t > 0$.

If $i\in \intg$ is such that $3\leq i\leq n+1$ we let
\begin{equation*}
	\xx_n(i) =  (\underbrace{0,0,\ldots,0,0}_{i-3\mbox{ times}},h_{n+1-i},h_{n+2-i}),
\end{equation*}
and for $2\leq n\leq N$, we define $\mathcal S_n = \{\xx_n(i): 3\leq i\leq n+1\}$.  To extend this definition to $n=1$ we define $\xx_1(2) = (1)$ and write $\mathcal S_1 = \{\xx_1(2)\}$.
It is easily verified from the definition that $$\xx_n(n+1) = (\underbrace{0,0,\ldots,0,0}_{n-1\mbox{ times}},1) \in \mathcal V_n$$ for all $n$, and we shall call this point the {\it trivial element} of $\mathcal V_n$.
From these observations, we conclude that that $\mathcal S_1\subseteq \mathcal V_1$ and $\mathcal S_2\subseteq \mathcal V_2$.
By applying the the recurrence relation from the Fibonacci Sequence, we are also able to obtain that
\begin{equation} \label{StandardReducible}
	\xx_n(i) = \xx_{n-1}(i) + \xx_{n-2}(i)\quad\mbox{ for all } 3\leq i\leq n-1
\end{equation}
and  
\begin{equation} \label{SpecialReducible}
	\xx_n(n) = \xx_{n-1}(n) + \xx_{n-2}(n-1).
\end{equation}
Using induction on $n$, these observations combine to ensure that $\mathcal S_n\subseteq\mathcal V_n$ for all $1\leq n\leq N$.  Additionally, we find it worth noting that $\#S_n = n-1$ for all $n$.

The definition of $\mathcal S_n$ makes this set appear more complicated than it actually is so we shall provide an example which we believe provides clarification.  Taking $N= 7$ and $n=5$ then
the vectors $\xx_5(i)$ are given by
\begin{equation*}
	x_5(3) = \begin{pmatrix} 2 \\ 3 \\ 0 \\0 \\ 0  \end{pmatrix}, \ x_5(4) = \begin{pmatrix} 0 \\ 1 \\ 2 \\0 \\0  \end{pmatrix}, \
	x_5(5) = \begin{pmatrix} 0 \\ 0 \\ 1 \\1 \\0 \end{pmatrix}, \ x_5(6) = \begin{pmatrix} 0 \\ 0 \\ 0 \\0 \\1  \end{pmatrix}
\end{equation*}
so that
\begin{equation*}
	\mathcal S_5 = \left\{  \begin{pmatrix} 2 \\ 3 \\ 0 \\0 \\ 0 \end{pmatrix}, \begin{pmatrix} 0 \\ 1 \\ 2 \\0 \\0  \end{pmatrix}, 
	\begin{pmatrix} 0 \\ 0 \\ 1 \\1 \\0 \end{pmatrix}, \begin{pmatrix} 0 \\ 0 \\ 0 \\0 \\1  \end{pmatrix} \right\}.
\end{equation*}
In a similar manner we obtain that
\begin{equation*}
	\mathcal S_6 = \left\{  \begin{pmatrix} 3 \\ 5 \\ 0 \\0 \\ 0 \\ 0 \end{pmatrix}, \begin{pmatrix} 0 \\ 2 \\ 3 \\0 \\0 \\ 0 \end{pmatrix}, 
	\begin{pmatrix} 0 \\ 0 \\ 1 \\2 \\0\\ 0 \end{pmatrix}, \begin{pmatrix} 0 \\ 0 \\ 0 \\1 \\1\\ 0  \end{pmatrix},
	\begin{pmatrix} 0 \\ 0 \\ 0 \\0 \\0\\ 1   \end{pmatrix} \right\}
\end{equation*}
and
\begin{equation*}
	\mathcal S_7 = \left\{  \begin{pmatrix} 5 \\ 8 \\ 0 \\0 \\ 0 \\ 0 \\ 0 \end{pmatrix}, \begin{pmatrix} 0 \\ 3 \\ 5 \\0 \\0 \\ 0 \\ 0 \end{pmatrix}, 
	\begin{pmatrix} 0 \\ 0 \\ 2 \\3 \\0\\ 0 \\ 0 \end{pmatrix}, \begin{pmatrix} 0 \\ 0 \\ 0 \\1 \\2\\ 0 \\ 0  \end{pmatrix},
	\begin{pmatrix} 0 \\ 0 \\ 0 \\0 \\1\\ 1 \\ 0  \end{pmatrix}, \begin{pmatrix} 0 \\ 0 \\ 0 \\0 \\0\\ 0 \\ 1  \end{pmatrix} \right\}.
\end{equation*}
We remind the reader that all of the above vectors are $7$-dimensional since we have chosen $N=7$.  As our notation permits, we have often omitted $0$'s and the end of each vector.
As promised, we believe that $\mathcal S_n$ may replace $\mathcal V_n$ in Theorem \ref{PreviousMain}.

\begin{conj} \label{MainConj}
	Suppose that $N\geq 3$ is an integer and $(p,q)$ is pair of primes which is compatible with $N$.
	If $n$ is an integer with $1\leq n\leq N$ then $m_t(\alpha_n) = \min\{f_\xx(t):\xx\in \mathcal S_n\}$ for all $t > 0$.
\end{conj}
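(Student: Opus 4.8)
The plan is to prove the statement by induction on $n$, with $N\geq 3$ and a pair of primes $(p,q)$ compatible with $N$ fixed throughout. Since $\mathcal S_n\subseteq\mathcal V_n$, Theorem \ref{PreviousMain} already gives $m_t(\alpha_n)\leq\min\{f_\xx(t):\xx\in\mathcal S_n\}$ for every $t>0$, so the entire content is the reverse inequality; unwinding the definitions, it suffices to show that for each $t>0$ there is an index $k$ with $3\leq k\leq n+1$ such that $f_{\xx_n(k)}(t)\leq f_\xx(t)$ for \emph{every} $\xx\in\mathcal V_n$. The cases $n\leq 2$ are immediate because $\mathcal S_n$ is then a single point (in fact $\mathcal V_n=\mathcal S_n$ for $n\leq 3$, by a direct computation from \eqref{VDef}), so the real work is the inductive step with $n\geq 3$, assuming the conclusion for $n-1$ and $n-2$.

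First I would strengthen the induction hypothesis so that it records \emph{which} element of $\mathcal S_n$ is optimal as a function of $t$. Writing $c_i=m(p^{h_i}/q^{h_{i-1}})$, the shape $\xx_n(i)=(\underbrace{0,\ldots,0}_{i-3},h_{n+1-i},h_{n+2-i})$ together with the Fibonacci identity $h_{m+1}-h_{m-1}=h_m$ shows, after a one-line computation, that $f_{\xx_n(i)}(t)^t=f_{\xx_n(i+1)}(t)^t$ holds exactly when $c_i^t=c_{i-1}^t+c_{i-2}^t$, i.e.\ precisely at $t=t_i$ from Proposition \ref{UniqueIntersection}. Hence the pairwise crossovers among the curves $t\mapsto f_{\xx_n(i)}(t)$ occur at $t_3,t_4,\ldots,t_n$, and the hypothesis that $(p,q)$ is compatible with $N$, namely $t_{N+1}<t_N<\cdots<t_3$, arranges these into a monotone chain. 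The strengthened claim to carry through the induction is then: $\min\{f_\xx(t):\xx\in\mathcal S_n\}$ is realized by $\xx_n(3)$ for $t>t_3$, by $\xx_n(k)$ for $t_k<t<t_{k-1}$ when $4\leq k\leq n$, and by the trivial element $\xx_n(n+1)$ for $0<t<t_n$. Verifying this staircase \emph{within} a fixed $n$ is bookkeeping: on each interval one checks the nominated minimizer beats the finitely many others, using the ordering of the $t_i$ and the convexity of $t\mapsto(\sum a_\ell^t)^{1/t}$.

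With this in hand the inductive step runs as follows. Fix $t>0$ and an arbitrary $\xx\in\mathcal V_n$; if $\xx=\xx_n(n+1)$ there is nothing to prove, so assume otherwise and write $\xx=\xx'+\xx''$ with $\xx'\in\mathcal V_{n-1}$, $\xx''\in\mathcal V_{n-2}$ (granting for now that such a decomposition exists — this is the crux, discussed below; note only that if it does, then $\xx'+\xx''\in\mathcal V_n$ since $A(\xx'+\xx'')=A\xx'+A\xx''$ and $h_n=h_{n-1}+h_{n-2}$). Because $f_\yy(t)^t=\sum_i y_i c_i^t$ is \emph{linear} in $\yy$, we get the exact identity $f_\xx(t)^t=f_{\xx'}(t)^t+f_{\xx''}(t)^t$. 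By Theorem \ref{PreviousMain} and the strengthened induction hypothesis, $f_{\xx'}(t)\geq m_t(\alpha_{n-1})=f_{\xx_{n-1}(i)}(t)$ and $f_{\xx''}(t)\geq m_t(\alpha_{n-2})=f_{\xx_{n-2}(j)}(t)$, where $i=i(t)$ and $j=j(t)$ are the optimal indices. Comparing the two staircases — and using that $\mathcal S_{n-1}$ runs over indices $3,\ldots,n$ while $\mathcal S_{n-2}$ runs over $3,\ldots,n-1$ — one finds $i(t)=j(t)$ for all $t$ except when $t<t_{n-1}$, where both staircases have reached their top index but those top indices are $n$ and $n-1$, so $(i,j)=(n,n-1)$. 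In the first case \eqref{StandardReducible} gives $\xx_{n-1}(i)+\xx_{n-2}(j)=\xx_n(i)\in\mathcal S_n$; in the second case \eqref{SpecialReducible} gives $\xx_{n-1}(n)+\xx_{n-2}(n-1)=\xx_n(n)\in\mathcal S_n$. Either way we obtain $\xx_n(k)\in\mathcal S_n$ with $f_{\xx_n(k)}(t)^t=f_{\xx_{n-1}(i)}(t)^t+f_{\xx_{n-2}(j)}(t)^t\leq f_{\xx'}(t)^t+f_{\xx''}(t)^t=f_\xx(t)^t$, as required; applying this with $\xx$ an $m_t(\alpha_n)$-minimizer closes the inequality, and a short separate check recovers the strengthened form for $n$.

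The step I expect to be the genuine obstacle — and, I suspect, the reason the statement is still a conjecture — is the decomposition claim $\mathcal V_n=\mathcal V_{n-1}+\mathcal V_{n-2}$ (Minkowski sum) for the non-trivial elements. Viewing $\mathcal V_n$ as the lattice points of the two-row system $A\xx=(h_n,h_{n-1})^T$, $\xx\in\nat_0^N$ — a polytope whose vertices have support of size at most two, among which $\mathcal S_n$ roughly picks out those with a set of \emph{consecutive} indices for support (plus the trivial vertex) — the claim amounts to showing that every nonnegative integral solution splits along the Fibonacci relation, equivalently that for each fixed $t$ the integer program $\min\{\sum_i x_i c_i^t:\ A\xx=(h_n,h_{n-1})^T,\ \xx\in\nat_0^N\}$ is optimized at a consecutive-support vertex. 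Both formulations hinge on quantitative control of the Fibonacci ratios $h_a/h_b$ against $\log q/\log p$ near $\frac{1+\sqrt 5}{2}$, the kind of control presently available only for small $N$; this matches the fact that the conjecture is verified only for $N\leq 13$. A secondary, purely organizational nuisance is that the ``special'' recurrence \eqref{SpecialReducible} breaks the uniform pattern at the top of the index range, so the smallest $n$ and the lowest interval $(0,t_n)$ have to be handled by hand.
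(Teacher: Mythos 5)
The statement you set out to prove is Conjecture \ref{MainConj}; the paper does not prove it, and neither does your proposal, as you yourself acknowledge at the end. Your architecture --- induction on $n$, the computation showing that $f_{\xx_n(i)}$ and $f_{\xx_n(i+1)}$ cross exactly at $t_i$, the resulting staircase description of $\min\{f_\xx(t):\xx\in\mathcal S_n\}$ on the intervals cut out by $t_n<\cdots<t_3$, and the use of the linearity of $\xx\mapsto f_\xx(t)^t$ to split a minimizer along a decomposition --- coincides with the paper's partial-progress machinery (Lemma \ref{VectorIntersection}, Theorem \ref{MinimumOver}, Lemma \ref{FactorizationLemma}). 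But the step you flag as the crux is not merely hard: it is false. The point $\zz=(1,0,0,4)\in\mathcal V_7$ admits no decomposition $\zz=\xx'+\xx''$ with $\xx'\in\mathcal V_6$ and $\xx''\in\mathcal V_5$. Any such splitting would have both summands supported on $\{1,4\}$, so $\xx'=(a,0,0,b)$ with $A\xx'=(h_6,h_5)$, whose second coordinate reads $2b=5$ --- no integer solution. Indeed the paper records that the \emph{only} factorizations of $(1,0,0,4)$ are the trivial and improper ones, and neither yields a lower bound of the required form. So the inductive step collapses already at $n=7$, and no strengthening of the induction hypothesis can repair a decomposition that does not exist.

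For comparison, the paper's route around this obstruction is to abandon the rigid two-term Fibonacci splitting in favor of arbitrary factorizations: Theorem \ref{MainProgress} uses Lemmas \ref{InfAttainSet} and \ref{FactorizationLemma} to discard every point admitting a useful nontrivial factorization, reducing the minimization first to the almost consecutive-free set $\mathcal C_n$ and then, assuming the conjecture for all indices below $n$, to the $\mathcal S$-restricted set $\mathcal R_n$. The points of $\mathcal R_n\setminus\mathcal S_n$ --- precisely the unfactorable ones such as $(1,0,0,4)$ --- must then be eliminated by directly verifying $f_\zz(t)\geq\min\{f_\xx(t):\xx\in\mathcal S_n\}$, which the paper can only do computationally, and only for $n\leq 13$. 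Your diagnosis that the residual difficulty is quantitative control of Fibonacci ratios against $\log q/\log p$ near the golden ratio is consistent with the paper's outlook, but as written your proposal does not constitute a proof of the conjecture, nor does it extend the cases already established.
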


We shall discuss our progress in the direction of Conjecture \ref{MainConj} in Section \ref{Progress}.
For now, we assert that if Conjecture \ref{MainConj} is correct then it is both best possible and it resolves Question \ref{ExceptionalCounting} in the affirmative.

\begin{thm} \label{ConjectureConsequences}
	Suppose that $N\geq 3$ is an integer and $(p,q)$ is pair of primes compatible with $N$.  If $1\leq n \leq N$ is such that
	$m_t(\alpha_n) = \min\{f_\xx(t):\xx\in \mathcal S_n\}$ for all $t > 0$ then the following conditions hold.
	\begin{enumerate}[(i)]
		\item\label{Best} If $\mathcal X\subseteq \mathcal V_n$ is such that $m_t(\alpha_n) = \min\{f_\xx(t):\xx\in \mathcal X\}$ then $\mathcal S_n \subseteq \mathcal X$.
		\item $\{t_3,t_4,\ldots,t_{n-1},t_n\}$ are the exceptional points for $\alpha_n$.  In particular, if $2\leq n\leq N$ then $\alpha_n$ has precisely $n-2$ exceptional points.
	\end{enumerate}
\end{thm}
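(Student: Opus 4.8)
\textit{Proof proposal.}
The plan is to use the hypothesis $m_t(\alpha_n)=\min\{f_\xx(t):\xx\in\mathcal S_n\}$ to reduce everything to an explicit study of the $n-1$ analytic functions $f_{\xx_n(i)}$, $3\le i\le n+1$. Writing $M_i:=m(p^{h_i}/q^{h_{i-1}})$, the definition of $\xx_n(i)$ gives
\[
 f_{\xx_n(i)}(t)^t=h_{n+1-i}M_{i-2}^t+h_{n+2-i}M_{i-1}^t
\]
(the case $i=n+1$ collapsing to $M_n^t$). Two preliminary facts are needed, both consequences of the hypotheses on $(p,q)$. First, \eqref{WeakCompatible} forces $q>p$, whence $M_1<M_2<\cdots<M_N$ are strictly increasing and in particular distinct. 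Second, applying the Fibonacci recurrence in the form $h_{n+2-j}-h_{n-j}=h_{n+1-j}$ yields
\[
 f_{\xx_n(j)}(t)^t-f_{\xx_n(j+1)}(t)^t=h_{n+1-j}\bigl(M_{j-2}^t+M_{j-1}^t-M_j^t\bigr),\qquad 3\le j\le n .
\]
The bracketed quantity is positive as $t\to0^+$ and, since $M_j$ exceeds both $M_{j-1}$ and $M_{j-2}$, tends to $-\infty$ as $t\to\infty$; by Proposition \ref{UniqueIntersection} (with $n$ replaced by $j$) its unique zero is $t_j$. Hence $f_{\xx_n(j)}(t)<f_{\xx_n(j+1)}(t)$ for $t>t_j$ and $f_{\xx_n(j)}(t)>f_{\xx_n(j+1)}(t)$ for $t<t_j$.

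Next I would invoke the defining inequality of compatibility, which (as $n\le N$) gives $t_n<t_{n-1}<\cdots<t_3$, and chain the pairwise comparisons above. An elementary induction then shows that on each of the open intervals $(0,t_n)$, $(t_j,t_{j-1})$ for $4\le j\le n$, and $(t_3,\infty)$, the functions $f_{\xx_n(3)},\dots,f_{\xx_n(n+1)}$ are totally ordered and exactly one is the strict minimum over all of $\mathcal S_n$ throughout that interval: $f_{\xx_n(n+1)}$ on $(0,t_n)$, $f_{\xx_n(j)}$ on $(t_j,t_{j-1})$, and $f_{\xx_n(3)}$ on $(t_3,\infty)$. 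Thus $m_t(\alpha_n)$ agrees with a single analytic function $f_{\xx_n(k)}$ on each such interval. At $t=t_j$ one checks, using that the crossing points are distinct and the strict ordering just obtained, that every $f_{\xx_n(l)}$ with $l\notin\{j,j+1\}$ is \emph{strictly} above $m_{t_j}(\alpha_n)$, so by continuity $m_t(\alpha_n)=\min\bigl(f_{\xx_n(j)}(t),f_{\xx_n(j+1)}(t)\bigr)$ on a whole two-sided neighborhood of $t_j$. This reduction of $m_t(\alpha_n)$ to a small, explicit piecewise-analytic form is the technical heart of the argument and the step I expect to be most delicate (in particular, ruling out that some far-away $f_{\xx_n(l)}$ dips below the claimed envelope, and doing the bookkeeping at the breakpoints carefully); everything after it is routine.

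From this description part (ii) follows quickly. If $t_j$ $(3\le j\le n)$ were standard, some measure function $f_{\bar\alpha}$ would equal $\min(f_{\xx_n(j)},f_{\xx_n(j+1)})$ near $t_j$, hence would agree with $f_{\xx_n(j)}$ on a one-sided interval and with $f_{\xx_n(j+1)}$ on the other; analyticity then forces $f_{\xx_n(j)}\equiv f_{\xx_n(j+1)}$, contradicting that they meet only at $t_j$. So each $t_j$ is exceptional. Conversely, any $t_0\notin\{t_3,\dots,t_n\}$ lies in one of the intervals above, on which $m_t(\alpha_n)$ coincides with the measure function of $\omega(\xx_n(k))$ for the appropriate $k$, so $t_0$ is standard (equivalently $m_t(\alpha_n)$ is analytic, hence $C^\infty$, there). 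Thus the exceptional set is exactly $\{t_3,\dots,t_n\}$, of size $n-2$ whenever $n\ge2$; the cases $n=1,2$ are immediate since then $\mathcal S_n$ is a singleton and $m_t(\alpha_n)$ is constant.

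Finally, for part (i), let $\mathcal X\subseteq\mathcal V_n$ satisfy $m_t(\alpha_n)=\min\{f_\xx(t):\xx\in\mathcal X\}$, fix $k$ with $\xx_n(k)\in\mathcal S_n$, and let $J_k$ be the open interval on which $m_t(\alpha_n)=f_{\xx_n(k)}(t)$ (which is nonempty for every such $k$ by the previous paragraph, and is $(0,\infty)$ in the degenerate cases $n\le2$). Since $\mathcal X$ is finite, its lower envelope equals $f_{\xx^\ast}$ for some fixed $\xx^\ast\in\mathcal X$ on a whole subinterval of $J_k$; there $f_{\xx^\ast}(t)=f_{\xx_n(k)}(t)$, so $f_{\xx^\ast}(t)^t$ and $f_{\xx_n(k)}(t)^t$ are two real linear combinations of $\{M_1^t,\dots,M_N^t\}$ agreeing on an interval, hence everywhere; as the $M_i$ are distinct, the functions $t\mapsto M_i^t$ are linearly independent, forcing $\xx^\ast=\xx_n(k)$. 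Therefore $\xx_n(k)\in\mathcal X$ for every $k$, i.e.\ $\mathcal S_n\subseteq\mathcal X$, completing the proof.
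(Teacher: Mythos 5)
Your proposal is correct and follows essentially the same route as the paper: the pairwise comparison of consecutive $f_{\xx_n(i)}$ via the Fibonacci recurrence and Proposition \ref{UniqueIntersection} is the paper's Lemma \ref{VectorIntersection}, the chaining under compatibility into a piecewise-analytic envelope is its Theorem \ref{MinimumOver}, and the analyticity arguments for parts (i) and (ii) match the paper's proof. The only (immaterial) difference is that you deduce ``agreement on an interval implies equal vectors'' from linear independence of the functions $t\mapsto M_i^t$ with distinct $M_i$, whereas the paper's Lemma \ref{MeasureFunctionsSwitch} uses a largest-differing-index limit argument combined with analyticity of $t\mapsto f_\xx(t)^t$.
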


The second statement of Theorem \ref{ConjectureConsequences} would indeed resolve Question \ref{ExceptionalCounting} in the affirmative.  After all, if we wished to 
create a rational number having $k$ exceptional points, we could apply Theorem \ref{IntersectionWeaving} to obtain a pair of primes $(p,q)$ which is compatible with $k+2$.
Then by Theorem \ref{ConjectureConsequences}, under the assumption of Conjecture \ref{MainConj}, we would obtain that $\alpha_{k+2}$ has $k$ exceptional points.


\section{Progress toward Conjecture \ref{MainConj}} \label{Progress}

Since it can be easily checked that $\mathcal V_1 = \mathcal S_1$ and $\mathcal V_2 = \mathcal S_2$, Conjecture \ref{MainConj} holds in the cases where $n\in \{1,2\}$.
Hence, it seems reasonable to attempt a proof by induction.  As we shall see in this section, this can be done for certain special cases of $N$, $p$ and $q$, but there is an obstruction
which prevents this method from being further generalized.
To demonstrate this progress as well as the obstruction, we must define two relevant sets in addition to $\mathcal V_n$ and $\mathcal S_n$ defined earlier.
For each of the subsequent definitions, we assume that $\zz = (z_1,z_2,\ldots,z_N) \in \mathcal V_n$.

We say that $\zz$ is {\it almost consecutive-free} if $z_j z_{j+1} \ne 0$ implies that $z_i = 0$ for all $i > j+1$.  We write $\mathcal C_n$ to denote the set of all almost consecutive-free
elements in $\mathcal V_n$.  It is obvious from the definition that $\mathcal S_n\subseteq \mathcal C_n$, but as our examples below will demonstrate, we do not have set equality.

Supposing that $\xx_1,\xx_2,\ldots,\xx_K \in \cup_{i=1}^n\mathcal V_i$, the $K$-tuple $(\xx_1,\xx_2,\ldots,\xx_K)$ is called a {\it factorization of $\zz$} if
\begin{equation*}
	\zz = \sum_{k=1}^K \xx_k.
\end{equation*}
Of course, we shall treat two factorizations as equivalent if one is simply a permutation of the other.  The point $\xx_n(n+1)\in \mathcal V_n$ has exactly one factorization, namely $(\xx_n(n+1))$.  
All other elements $\zz = (z_1,z_2,\ldots,z_N)\in \mathcal V_n$ have at least two factorizations obtained by examining the sums
\begin{equation} \label{TrivialFactorizations}
	\zz = \sum_{i=1}^1\zz\quad\mbox{and}\quad \zz = \sum_{i=1}^N z_i\xx_i(i+1).
\end{equation}
The left hand factorization in \eqref{TrivialFactorizations} is called the {\it trivial factorization of $\zz$} and the right hand factorization is called the {\it improper factorization of $\zz$}.
We say that the factorization $(\xx_1,\xx_2,\ldots,\xx_K)$ of $\zz$ is an {\it $\mathcal S$-type factorization} if $\xx_k \in \cup_{i=1}^n \mathcal S_i$ for all $1\leq k\leq K$.
 An element is called {\it $\mathcal S$-restricted} if all of its non-trivial factorizations are $\mathcal S$-type, and we write $\mathcal R_n$ to denote
the set of all $\mathcal S$-restricted elements of $\mathcal C_n$.

Although many of our earlier definitions in the paper depended on the primes $p$ and $q$, we note the sets $\mathcal C_n$ and $\mathcal R_n$ have no such dependency.
Strictly speaking, they do depend on $N$, however, any change in $N$ while keeping $n$ fixed will simply add or remove a list of $0$'s at the end of each element.  We now provide
an improvement over Theorem \ref{PreviousMain} which enables our progress toward Conjecture \ref{MainConj}.

\begin{thm} \label{MainProgress}
	If $n$ and $N$ are positive integers such that $1\leq n\leq N$ then 
	\begin{equation} \label{ProgressContainments}
		\mathcal S_n \subseteq \mathcal R_n \subseteq \mathcal C_n \subseteq \mathcal V_n.
	\end{equation}
	Moreover, if $(p,q)$ is a pair of primes compatible with $N$, then the following conditions hold:
	\begin{enumerate}[(i)]
		\item\label{Consecutive} $m_t(\alpha_n) = \min\{f_\xx(t):\xx\in \mathcal C_n\}$
		\item\label{Restricted} If $m_t(\alpha_i) = \min\{f_\xx(t):\xx\in \mathcal S_i\}$ for all $1 \leq i < n$ then $m_t(\alpha_n) = \min\{f_\xx(t):\xx\in \mathcal R_n\}$.
	\end{enumerate}
\end{thm}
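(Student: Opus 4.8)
The plan is to establish the chain of containments \eqref{ProgressContainments} first by elementary combinatorial reasoning, then deduce \eqref{Consecutive} from Theorem \ref{PreviousMain} by an exchange/replacement argument, and finally derive \eqref{Restricted} from \eqref{Consecutive} using the inductive hypothesis on the $m_t(\alpha_i)$ for $i<n$. For the containments, the inclusions $\mathcal S_n\subseteq\mathcal C_n\subseteq\mathcal V_n$ are immediate: $\mathcal C_n\subseteq\mathcal V_n$ is by definition, and every $\xx_n(i)$ has at most two consecutive nonzero entries placed at the tail, so it is trivially almost consecutive-free. The inclusion $\mathcal S_n\subseteq\mathcal R_n$ requires checking that every nontrivial factorization of an element of $\mathcal S_n$ is an $\mathcal S$-type factorization; here I would use the relations \eqref{StandardReducible} and \eqref{SpecialReducible} together with a minimality/uniqueness argument showing that $\xx_n(i)$ decomposes only in the ways dictated by the Fibonacci recurrence, so its summands are forced to lie in $\cup_i\mathcal S_i$. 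The inclusion $\mathcal R_n\subseteq\mathcal C_n$ holds by definition since $\mathcal R_n$ is defined as a subset of $\mathcal C_n$.

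For part \eqref{Consecutive}, I start from $m_t(\alpha_n)=\min\{f_\xx(t):\xx\in\mathcal V_n\}$ (Theorem \ref{PreviousMain}) and argue that the minimum is always attained at some $\xx\in\mathcal C_n$. Suppose $\zz=(z_1,\dots,z_N)\in\mathcal V_n$ is \emph{not} almost consecutive-free, so there is some $j$ with $z_jz_{j+1}\ne 0$ and a nonzero entry $z_i$ with $i>j+1$. The idea is a local rewriting move: since $h_{j}+h_{j+1}=h_{j+2}$, we can trade one copy of the exponent pair at index $j$ and one at index $j+1$ for one copy at index $j+2$, i.e.\ replace $\zz$ by $\zz' = \zz - \ee_j - \ee_{j+1} + \ee_{j+2}\in\mathcal V_n$ (using standard basis vectors $\ee_k$; I would spell this out without the $\ee$ macro since it is undefined). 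The compatibility hypothesis on $(p,q)$ — specifically that $t_{n}$ is the crossover exponent where $m(\alpha_{j+2})^t = m(\alpha_{j+1})^t + m(\alpha_j)^t$ and that $t_{N+1}<\cdots<t_3$, combined with \eqref{WeakCompatible} — is exactly what forces $m(\alpha_{j+2})^t \le m(\alpha_{j+1})^t + m(\alpha_j)^t$ on the relevant range, hence $f_{\zz'}(t)\le f_{\zz}(t)$ for all $t>0$. Iterating this move (which strictly decreases a suitable monovariant, e.g.\ $\sum_i i\cdot z_i$ bounded above, or the number of ``bad'' triples) terminates at an element of $\mathcal C_n$ with measure function pointwise no larger, giving \eqref{Consecutive}. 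I expect this exchange step — verifying that the move genuinely decreases the measure function pointwise in $t$ under the compatibility hypothesis, and that the process terminates — to be the main obstacle, since it is where all the arithmetic of the $t_n$ and the primes $p,q$ enters.

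For part \eqref{Restricted}, assume $m_t(\alpha_i)=\min\{f_\xx(t):\xx\in\mathcal S_i\}$ for all $i<n$. By \eqref{Consecutive} it suffices to show that for every $\zz\in\mathcal C_n$ there is some $\ww\in\mathcal R_n$ with $f_\ww(t)\le f_\zz(t)$ for all $t$. Given $\zz\in\mathcal C_n\setminus\mathcal R_n$, it has a nontrivial factorization $(\xx_1,\dots,\xx_K)$ that is not $\mathcal S$-type, so some summand $\xx_k$ lies in $\mathcal V_i\setminus\mathcal S_i$ for some $i<n$; by the inductive hypothesis and Theorem \ref{PreviousMain} there is $\yy\in\mathcal S_i$ with $f_\yy(t)\le f_{\xx_k}(t)$, and because measure functions behave additively under the $\ell^t$-norm in the way recorded in the definition of $f_\xx$, replacing $\xx_k$ by $\yy$ inside the factorization and re-summing produces a new element $\zz'\in\mathcal V_n$ with $f_{\zz'}(t)\le f_\zz(t)$; passing back through \eqref{Consecutive} we may assume $\zz'\in\mathcal C_n$. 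One then needs a termination argument — a well-founded measure on factorizations that strictly decreases under this replacement — to conclude that after finitely many steps we reach an $\mathcal S$-restricted element of $\mathcal C_n$, i.e.\ an element of $\mathcal R_n$, whence $m_t(\alpha_n)=\min\{f_\xx(t):\xx\in\mathcal R_n\}$.
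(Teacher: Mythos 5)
Your chain of containments is essentially fine in outline (for $\mathcal S_n\subseteq\mathcal R_n$ the paper makes your ``uniqueness of decomposition'' precise via a $2\times 2$ linear-algebra lemma: any summand of a vector supported on entries $j-1,j$ is itself supported there, and the matrix $\bigl(\begin{smallmatrix} h_{j-1} & h_j \\ h_{j-2} & h_{j-1}\end{smallmatrix}\bigr)$ has determinant $\pm 1$, forcing the summand to equal some $\xx_i(j+1)$). The real problem is in part \eqref{Consecutive}. Your rewriting move $\zz\mapsto \zz'=\zz-\ee_j-\ee_{j+1}+\ee_{j+2}$ changes $f_\zz(t)^t$ by $m(\alpha_{j+2})^t-m(\alpha_{j+1})^t-m(\alpha_j)^t$, and by Proposition \ref{UniqueIntersection} this quantity is negative precisely for $t<t_{j+2}$ and \emph{positive} for $t>t_{j+2}$. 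So the claim $f_{\zz'}(t)\le f_\zz(t)$ for all $t>0$ is false; no single exchange dominates pointwise in $t$ (this is exactly why the minimum over $\mathcal S_n$ switches between different vectors at the points $t_i$). The paper does not argue by pointwise domination at all. It introduces the infimum-attaining set $\mathfrak U_\xx=\{t: m_t(\alpha_n)=f_\xx(t)\}$, proves that a factorization $\zz=\sum_k\xx_k$ forces $\mathfrak U_\zz\subseteq\bigcap_k\mathfrak U_{\xx_k}$ (a replacement argument applied \emph{inside} the factorization at a single $t$), and proves separately that any vector whose $\mathfrak U$ is finite may be deleted from the minimizing set (a continuity-plus-pigeonhole lemma). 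For $\zz\notin\mathcal C_n$ it then exhibits the factorization $\zz=\xx_{j+2}(j+2)+\xx_k(k+1)+\sum_i z_i'\xx_i(i+1)$ and observes $\mathfrak U_{\xx_{j+2}(j+2)}\subseteq[t_{j+2},t_{j+1}]$ while $\mathfrak U_{\xx_k(k+1)}\subseteq(0,t_k]$ with $k\ge j+2$, so the intersection is at most one point and $\mathfrak U_\zz$ is finite. The compatibility ordering $t_{N+1}<\cdots<t_3$ enters exactly here, not in a pointwise inequality.

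Part \eqref{Restricted} has a parallel gap that you flag yourself: the termination of your replacement scheme is the whole difficulty and is not supplied. The replacement $\xx_k\mapsto\yy$ depends on $t$, the resulting $\zz'$ need not lie in $\mathcal C_n$ or $\mathcal R_n$, and no monovariant is given (nor is one apparent, since $f_\yy(t)=f_{\xx_k}(t)$ can occur and the new vector's factorizations are uncontrolled). The paper again avoids iteration: if $\zz\in\mathcal C_n\setminus\mathcal R_n$ then some factor $\xx_1\in\mathcal V_i\setminus\mathcal S_i$ with $i<n$ satisfies $f_{\xx_1}(t)\ge\min\{f_\yy(t):\yy\in\mathcal S_i\}$ by the inductive hypothesis; if $\mathfrak U_{\xx_1}$ were infinite, pigeonhole would give $\yy\in\mathcal S_i$ with $f_{\xx_1}=f_\yy$ at infinitely many $t$, forcing $\xx_1=\yy$ (the measure functions $t\mapsto f_\xx(t)^t$ are entire, so two of them agreeing infinitely often on a compact set coincide), a contradiction. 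Hence $\mathfrak U_\zz\subseteq\mathfrak U_{\xx_1}$ is finite and $\zz$ can be discarded. To repair your write-up you would need either this $\mathfrak U$-machinery or a genuine well-founded ordering for your iterations; as it stands both (i) and (ii) are incomplete.
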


Theorem \ref{MainProgress} constitutes an improvement over Theorem \ref{PreviousMain}, and moreover, it provides a further improvement if we are willing to assume Conjecture \ref{MainConj}
for all indices strictly smaller than $n$.  It is worth noting that we only rarely have $\mathcal R_n = \mathcal S_n$, and hence, Theorem \ref{MainProgress} falls short of a proof of Conjecture \ref{MainConj}.
As of this moment, we do not the believe that a minor improvement to the proof of Theorem \ref{MainProgress} is sufficient to obtain Conjecture \ref{MainConj}.  Among other things, the proof requires providing 
a proper non-trivial factorization for elements of $\mathcal V_n\setminus \mathcal S_n$, and as we shall see in the examples below, such a factorization does not always exist.

In spite of these shortcomings, Theorem \ref{MainProgress} can be used to establish special cases of Conjecture \ref{MainConj}.  If we can determine the points in $\mathcal V_n$, then it is a simple computational 
exercise to search those points to find those which lie in $\mathcal C_n$.  Once $\mathcal C_n$ is determined, then we may apply the following lemma to inductively list the points in $\mathcal R_n$.

\begin{lem} \label{RestrictedComputations}
	Suppose that $n$ and $N$ are positive integers such that $1\leq n\leq N$ and $\zz\in \mathcal C_n\setminus \mathcal S_n$.  Then $\zz\not\in \mathcal R_n$ if and only if
	there exist $1\leq i < n$ and $\xx\in \mathcal R_i\setminus S_i$ such that all entries of $\zz - \xx$ are non-negative.
\end{lem}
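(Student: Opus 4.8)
The plan is to unravel the definitions on both sides and then reduce the backward direction to an inductive statement. Recall $\zz \in \mathcal R_n$ means $\zz \in \mathcal C_n$ and every non-trivial factorization of $\zz$ is an $\mathcal S$-type factorization; since we are already given $\zz \in \mathcal C_n$, the assertion $\zz \notin \mathcal R_n$ is equivalent to the existence of at least one non-trivial factorization $(\xx_1,\ldots,\xx_K)$ of $\zz$ that is \emph{not} $\mathcal S$-type, i.e.\ some $\xx_k \in (\cup_{i=1}^n \mathcal V_i) \setminus (\cup_{i=1}^n \mathcal S_i)$. Because the trivial factorization $(\zz)$ uses $\zz \in \mathcal C_n \setminus \mathcal S_n \subseteq \mathcal V_n \setminus \mathcal S_n$, one might worry the trivial factorization itself is the obstruction; but ``non-trivial'' excludes it by definition, so we are genuinely asking about factorizations into $K \geq 2$ pieces or, when $K=1$, a single piece that is not $\zz$ — but a one-piece factorization must equal $\zz$, so effectively $K \geq 2$ whenever the factorization is non-trivial and $\zz$ itself is the only one-piece option. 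I should state this carefully at the outset.

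First I would prove the backward direction, which I expect to be the easier half. Suppose there exist $1 \leq i < n$ and $\xx \in \mathcal R_i \setminus \mathcal S_i$ with $\zz - \xx$ having all non-negative (integer) entries. Since $\xx \in \mathcal V_i$, applying $A$ gives $A(\zz - \xx) = \binom{h_n}{h_{n-1}} - \binom{h_i}{h_{i-1}}$; I then need to exhibit this as a non-negative integer combination realizing $\zz - \xx$ as a sum of vectors from $\cup_{j=1}^n \mathcal V_j$ — the cheapest route is to use the improper-type expansion $\zz - \xx = \sum_{j=1}^N (z_j - x_j)\,\xx_j(j+1)$ from \eqref{TrivialFactorizations}, each $\xx_j(j+1)$ being the trivial element of $\mathcal V_j \subseteq \cup \mathcal V_\ell$. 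Then $(\xx)$ together with these pieces is a factorization of $\zz$; it is non-trivial because it is not the single-piece factorization (it contains $\xx \neq \zz$, as $\xx \in \mathcal V_i$ with $i<n$ forces $\xx \neq \zz$). And it is not $\mathcal S$-type, because $\xx \notin \cup_{j=1}^n \mathcal S_j$: indeed $\xx \in \mathcal V_i \setminus \mathcal S_i$, and membership in some $\mathcal S_j$ with $j \neq i$ is ruled out since elements of $\mathcal V_i$ map under $A$ to $\binom{h_i}{h_{i-1}}$ while elements of $\mathcal S_j \subseteq \mathcal V_j$ map to $\binom{h_j}{h_{j-1}}$, and these are distinct for distinct indices. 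Hence $\zz \notin \mathcal R_n$.

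Next I would prove the forward direction, the part I expect to be the main obstacle. Assume $\zz \notin \mathcal R_n$, so there is a non-trivial factorization $(\xx_1,\ldots,\xx_K)$, $K \geq 2$, with some factor $\xx_k \notin \cup_{j=1}^n \mathcal S_j$; say $\xx_k \in \mathcal V_i$ for some $i \leq n$. If $i < n$ I would like to take $\xx = \xx_k$ directly: then $\zz - \xx_k = \sum_{\ell \neq k} \xx_\ell$ is a sum of vectors in $\nat_0^N$, hence has non-negative entries, and I need $\xx_k \in \mathcal R_i \setminus \mathcal S_i$. That $\xx_k \notin \mathcal S_i$ follows as above from $\xx_k \notin \cup \mathcal S_j$; the delicate point is $\xx_k \in \mathcal R_i$, which is not automatic — $\xx_k$ need only lie in $\mathcal V_i$. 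Here I would argue by induction on $n$ (or on $i$): if $\xx_k \notin \mathcal R_i$, either $\xx_k \notin \mathcal C_i$ or $\xx_k$ has a non-$\mathcal S$-type non-trivial factorization; in the latter case the inductive hypothesis applied to $\xx_k$ produces a smaller-index $\xx' \in \mathcal R_{i'} \setminus \mathcal S_{i'}$ with $\xx_k - \xx' \geq 0$ entrywise, whence $\zz - \xx' \geq 0$ and we are done with $\xx'$ in place of $\xx$; in the former case ($\xx_k \notin \mathcal C_i$) I must handle a factor violating the almost-consecutive-free condition — this is where I expect to need extra work, showing that such a factor can itself be re-factored so as to expose a vector in $\cup_j \mathcal V_j$ of strictly smaller first-nonzero-support index lying outside the corresponding $\mathcal S$, then recurse. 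The remaining case $i = n$, i.e.\ the bad factor is itself in $\mathcal V_n$: since $K \geq 2$ and all $\xx_\ell \in \nat_0^N$ with $A\xx_k = \binom{h_n}{h_{n-1}} = A\zz$, every other $\xx_\ell$ lies in $\ker A \cap \nat_0^N = \{0\}$, forcing $\xx_k = \zz$ and $K$ effectively $1$, contradicting non-triviality — so this case does not arise. Assembling these cases and checking the base cases $n \in \{1,2\}$ (where $\mathcal C_n = \mathcal S_n$, so the hypothesis $\zz \in \mathcal C_n \setminus \mathcal S_n$ is vacuous) completes the argument.
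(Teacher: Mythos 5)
Your overall structure matches the paper's proof: the ``if'' direction by exhibiting the explicit non-trivial, non-$\mathcal S$-type factorization $\zz = \xx + \sum_j (z_j - x_j)\xx_j(j+1)$, and the ``only if'' direction by strong induction on $n$, passing from a bad factor of $\zz$ to an element of some $\mathcal R_{i'}\setminus\mathcal S_{i'}$ dominated entrywise by $\zz$. The backward direction and the elimination of the case $i=n$ (via $\ker A\cap\nat_0^N=\{0\}$, so a factor lying in $\mathcal V_n$ forces the factorization to be trivial) are fine.

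There is, however, one genuine gap: in the forward direction you split into the case where the bad factor $\xx_k$ has a non-$\mathcal S$-type non-trivial factorization of its own (handled by induction) and the case $\xx_k\notin\mathcal C_i$, and for the latter you only say that ``extra work'' will be needed to re-factor such a piece and recurse. That case must actually be closed, and the way to close it is to observe that it never occurs: since $\zz\in\mathcal C_n$ and every factor in a factorization of $\zz$ has non-negative integer entries, each factor $\xx_\ell$ satisfies $x_{\ell,m}\leq z_m$ for every coordinate $m$. Hence if $x_{\ell,j}x_{\ell,j+1}\neq 0$ then $z_j z_{j+1}\neq 0$, so $z_m=0$ for all $m>j+1$, and therefore $x_{\ell,m}=0$ for all $m>j+1$ as well. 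In other words, the almost-consecutive-free property is inherited by every factor, so $\xx_k\in\mathcal C_i$ automatically, $\xx_k\in\mathcal C_i\setminus\mathcal S_i$ with $\xx_k\notin\mathcal R_i$, and the inductive hypothesis (the lemma itself for the smaller index) applies directly. This is exactly the observation the paper relies on when it writes the bad factor as an element of $\mathcal C_k\setminus\mathcal S_k$ rather than merely of $\mathcal V_k$. With that one-line remark inserted, your argument is complete and coincides with the paper's.
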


In view of Lemma \ref{RestrictedComputations}, we may apply the following four step process to list the points in $\mathcal R_n\setminus \mathcal S_n$:

\begin{enumerate}
	\item List the points in $\mathcal V_n$.
	\item Test each point in $\mathcal V_n$ to see whether it satisfies the required conditions to belong to $\mathcal C_n$.
	\item Form the sets $\mathcal C_n\setminus \mathcal S_n$.
	\item Assuming we have already found the points in $\mathcal R_i\setminus \mathcal S_i$ for all $ 1\leq i < n$, 
		use Lemma \ref{RestrictedComputations} to test each point in $\mathcal C_n\setminus \mathcal S_n$ for membership in $\mathcal R_n$.  This computation requires performing
	\begin{equation*}
		\#\mathcal (\mathcal C_n\setminus \mathcal S_n) \cdot \sum_{i=1}^{n-1} \# (\mathcal R_i\setminus \mathcal S_i) 
	\end{equation*}
	vector comparisons.
\end{enumerate}
	
Since $\mathcal V_n$ can be quite large compared to $n$, listing its points is a non-trivial computational problem.  Nevertheless, {\it Mathematica}'s {\tt Solve} command was sufficient to accomplish this
goal for $n\leq 13$.  Using the strategies outlined above, we have obtained complete lists of these sets when $n\leq 13$.  We shall provide additional details
in the discussion below, bur for now, we list their cardinalities.

\bigskip

\begin{center}
\begin{tabular}{| >{$}c<{$} | >{$}c<{$} | >{$}c<{$}  | >{$}c<{$}  |  >{$}c<{$}  |}
	\hline
	n & \#\mathcal V_n& \#\mathcal C_n &  \#\mathcal R_n  & \#\mathcal S_n  \\
	\hline \hline
	1 & 1 & 1 & 1  & 1  \\ \hline
	2 & 1 & 1 & 1 & 1    \\ \hline
	3 & 2 & 2 & 2& 2    \\ \hline
	4 & 3 & 3 & 3 & 3  \\ \hline
	5 & 6 & 4 & 4 &4  \\ \hline
	6 & 13 & 5 & 5 & 5  \\ \hline
	7 & 38 & 7 & 7 & 6\\ \hline
	8 & 139 & 11 & 8 & 7  \\ \hline
	9 & 695 & 20 & 10 & 8\\ \hline
	10 & 4,699 & 41  & 12 & 9  \\ \hline
	11 & 44, 359 & 104  & 18 & 10 \\ \hline
	12 & 589,359 & 310 & 24 &11 \\ \hline
	13 & 11,197,998 & 1101 & 44 & 12 \\ \hline
	\end{tabular}
\end{center}

\bigskip

One notable feature of this data is that $\mathcal C_n = \mathcal R_n = \mathcal S_n$ for all $n\leq 6$, so we immediately obtain Conjecture \ref{MainConj} for all $n\leq 6$ and all pairs of primes
$(p,q)$ which are compatible with $N$.  As a result, we obtain rational numbers having up to $4$ exceptional points without doing any more work.  If we wish to create more than $4$ exceptional points
using our method, we need to provide additional information regarding the sets $\mathcal R_n$.  Specifically, we need to examine each point in 
$\zz \in \mathcal R_i \setminus \mathcal S_i$, for all $1\leq i\leq n$, and show that 
\begin{equation} \label{MinTest}
	f_\zz(t) \geq \min\{f_\xx(t):\xx\in \mathcal S_i\}.
\end{equation}
We are able to accomplish this goal for $n\leq 13$ by calculating the points in $\mathcal R_n \setminus \mathcal S_n$ for all $n\leq 13$ and testing each one for inequality \eqref{MinTest}.

In order to abbreviate our reporting of the points in $\mathcal R_n \setminus \mathcal S_n$, we note that the map $\lambda:\mathcal \real^N\to \real^N$ given by
\begin{equation*}
	\lambda(((x_1,x_2,\ldots,x_N)) = (0,x_1,x_2,\ldots,x_{N-1})
\end{equation*}
defines an injection from $\mathcal V_{n-1}$ to $\mathcal V_n$ for any $2\leq n\leq N$.  Moreover, it can be shown that
\begin{equation*}
	\lambda(\mathcal R_{n-1}) \subseteq \mathcal R_n\quad\mbox{and}\quad \lambda(\mathcal R_{n-1}\setminus \mathcal S_{n-1}) \subseteq \mathcal R_n\setminus\mathcal S_n.
	\footnote{These set containments are still correct when $\mathcal \mathcal R_{n-1}$ and $\mathcal R_n$ are replaced with $\mathcal S_{n-1}$, $\mathcal C_{n-1}$ or $\mathcal V_{n-1}$
	and $\mathcal S_{n}$, $\mathcal C_{n}$ or $\mathcal V_{n}$, respectively.}
\end{equation*}
Therefore, when reporting the vectors in $\mathcal R_n \setminus \mathcal S_n$, it is sufficient to record only the points in
\begin{equation*}
	\Delta_n := \left(\mathcal R_n \setminus \mathcal S_n\right) \setminus \lambda(\mathcal R_{n-1} \setminus \mathcal S_{n-1}).
\end{equation*}
We caution the reader that we are currently unable to prove that
\begin{equation*}
	f_\zz(t) \geq \min\{f_\xx(t): \xx\in \mathcal S_{n-1}\} \implies f_{\lambda(\zz)}(t) \geq \min\{f_\xx(t): \xx\in \mathcal S_{n}\} \quad\mbox{for all } \zz\in \mathcal S_{n-1}.
\end{equation*}
Therefore, even though we shall only report points in $\Delta_n$, we must test all points of $\mathcal R_n \setminus \mathcal S_n$ for \eqref{MinTest} at each step rather than only those in $\Delta_n$.  
Our data regarding the points in $\Delta_n$ are in the table below.

\bigskip

\begin{center}
\begin{tabular}{| >{$}c<{$} | P{15cm}  |}
	\hline
	n & $\Delta_n$\\
	\hline \hline
	1 & None \\ \hline
	2 & None   \\ \hline
	3 & None   \\ \hline
	4 & None  \\ \hline
	5 & None  \\ \hline
	6 & None \\ \hline
	7 &    $(1,0,0,4)$                    \\ \hline
	8 &     None                  \\ \hline
	9 &     $(1,0,0,3,0,3)$                 \\ \hline
	10 &   $(1,0,0,2,0,6)$                  \\ \hline
	11 &   $(1,0,0,0,0,11),\ (1, 0, 0, 1, 0, 8, 0, 1),\ (1, 0, 0, 1, 0, 9, 1),\linebreak (1, 0, 0, 2, 0, 5, 0, 2),\ (1, 0, 0, 3, 0, 2, 0, 3)$  \\ \hline
	12 &    $(1,0,0,0,0,10,0,3),\ (1, 0, 0, 1, 0, 7, 0, 4),\ (1,0,0,1,0,9,0,0,2),\linebreak (1, 0, 0, 2, 0, 4, 0, 5),\ (1,0,0,3,0,1,0,6)$             \\ \hline
	13 &    $(1, 0, 0, 0, 0, 8, 0, 8),\ (1, 0, 0, 0, 0, 9, 0, 5,0,1),\  (1, 0, 0, 0, 0, 9, 0, 6, 1),\ (1, 0, 0, 0, 0,10, 0, 2, 0, 2),\linebreak
			 (1, 0, 0, 1, 0, 5, 0, 9),\ (1, 0, 0, 1, 0, 6, 0, 6, 0, 1),\ (1, 0, 0, 1, 0, 6, 0, 7, 1),\ (1, 0, 0, 1, 0, 7, 0, 3, 0, 2),\linebreak
			  (1, 0, 0, 1, 0, 8, 0, 0, 0, 3),\ (1, 0, 0, 2, 0, 2, 0, 10),\ (1, 0, 0, 2, 0, 3, 0, 7, 0, 1),\ (1, 0, 0, 2, 0, 3, 0, 8, 1), \linebreak
			  (1, 0, 0, 2, 0, 4, 0, 4, 0, 2),\ (1, 0, 0, 2, 0, 5, 0, 1, 0, 3),\ (1, 0, 0, 3, 0, 0, 0, 8, 0, 1),\ (1, 0, 0, 3, 0, 0, 0, 9, 1),\linebreak
			  (1, 0, 0, 3, 0, 0, 1, 10),\ (1, 0, 0, 3, 0, 1, 0, 5, 0, 2),\ (1, 0, 0, 3, 0, 2, 0, 2, 0, 3)$     \\ \hline
	\end{tabular}
\end{center}

\bigskip

Our methods for computing $\mathcal R_n$ seem to be insufficient for $n=14$.  Specifically, our methods require that we first compute $\mathcal V_n$ en route to computing $\mathcal C_n$ followed by
$\mathcal R_n$.  Hence, in order to provide more data, we would need to accomplish one of the following goals:
\begin{enumerate}
	\item Find a way to determine $\mathcal C_n$ or $\mathcal R_n$ without first listing the points in $\mathcal V_n$.
	\item Find a more efficient way to compute $\mathcal V_n$ than using {\it Mathematica}'s {\tt Solve} command.
\end{enumerate}
Even with improved computational methods, we don't believe the technique outlined in this section may be used to prove Conjecture \ref{MainConj}.  
For example, the only factorizations of $(1,0,0,4) \in \mathcal R_7$ are the trivial and improper factorizations, and neither such factorization provides assistance in proving \eqref{MinTest}.
When we provide the proofs of these results in Section \ref{MainProgressProofs}, we shall see explicitly why these factorizations are not useful.

Nevertheless, we are able to use the data provided above to solve Conjecture \ref{MainConj} when $n\leq 13$ by verifying \eqref{MinTest} for each relevant point.
For the purposes of this discussion, we shall take $N=13$ and $(p,q) = (1879,198301)$ and we verify using {\it Mathematica} that our choice of $(p,q)$ is compatible with $13$ (In fact, it can be shown that
$21$ is the largest integer with which these primes are compatible).    According to Theorem \ref{MainProgress}\eqref{Restricted}, to prove that $m_t(\alpha_7) = \min\{f_\xx(t):\xx\in \mathcal S_7\}$ 
it remains only to show that
\begin{equation*}
	f_{(1,0,0,4)}(t) \geq \min\{f_\xx(t):\xx\in \mathcal S_7\}.
\end{equation*}
We see the graphs of the relevant functions below.  Notice that the measure function for $(1,0,0,4)$ (the dashed curve) always lies above the minimum of the measure functions for points in 
$\mathcal S_7$ (the solid curves).

\begin{center}
	\includegraphics[height=8cm,width=13cm]{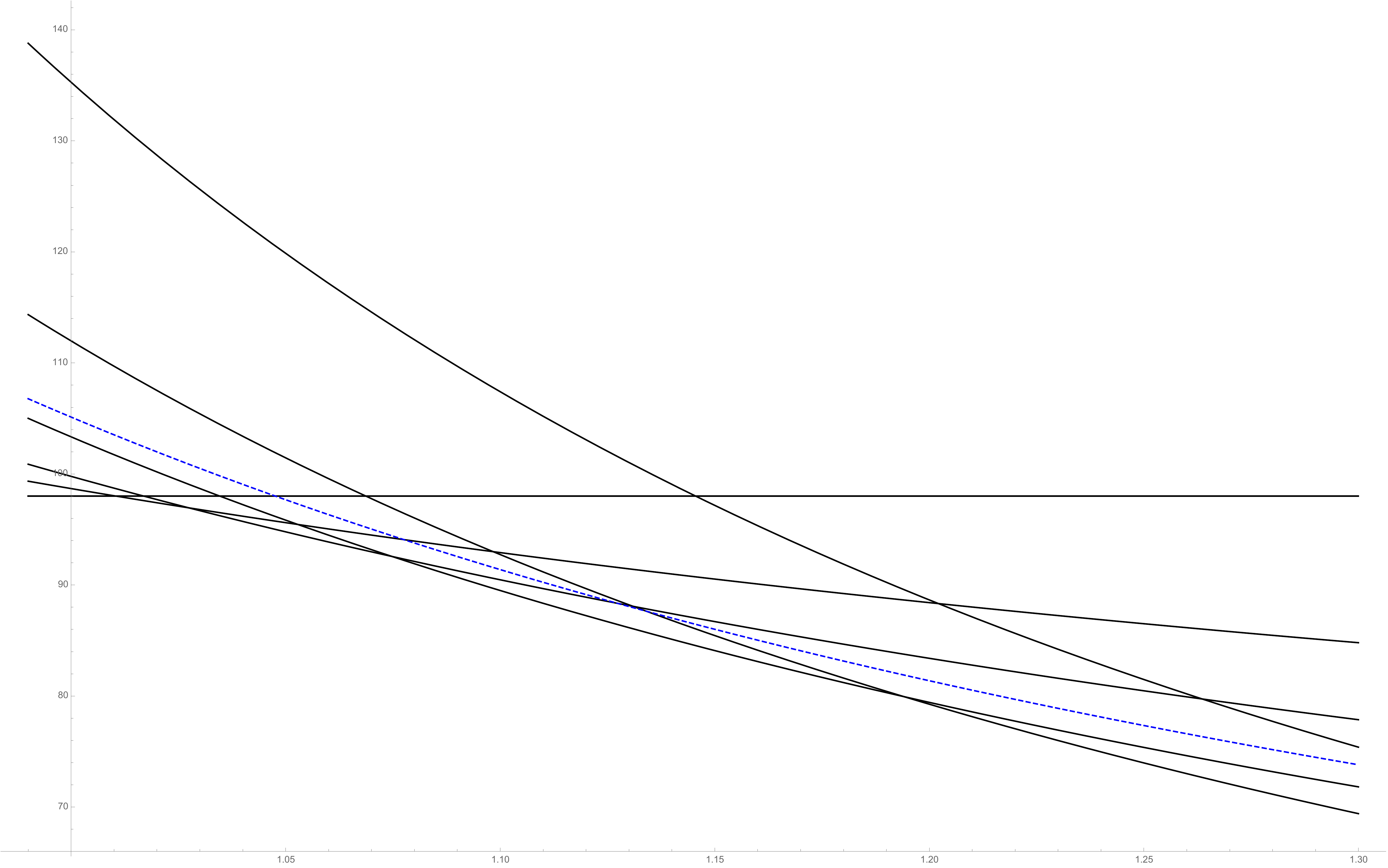}
\end{center}

As a result, we have now established that $m_t(\alpha_7) = \min\{f_\xx(t):\xx\in \mathcal S_7\}$, and in view of Theorem \ref{ConjectureConsequences}, we know that $\alpha_7$ has $5$ exceptional points.
The above diagram shows only $4$ exceptional points $t_7, t_6, t_5$ and $t_4$ while $t_3$ lies off the page.
Since we currently know the elements of $\mathcal R_n\setminus\mathcal S_n$ for $n\leq 13$, we can perform 
similar calculations when $n\in \mathcal\{8,9,10,11,12,13\}$ which lead us to a resolution of Conjecture \ref{MainConj} in these cases.  In particular, Theorem \ref{ConjectureConsequences}
establishes that
\begin{equation*}
	\alpha_{13} = \frac{1879^{233}}{198301^{144}}
\end{equation*}
has $11$ exceptional points.


\section{Proofs of Proposition \ref{UniqueIntersection} and Theorem \ref{IntersectionWeaving}} \label{FirstProofs}

The proof of Proposition \ref{UniqueIntersection} is very straightforward and we begin this subsection with its short proof.

\begin{proof}[Proof of Proposition \ref{UniqueIntersection}]
	We define $g:(0,\infty)\to (0,\infty)$ by
	\begin{equation*}
		g(t) = \left(m\left( \frac{p^{h_{n-1}}}{q^{h_{n-2}}}\right)^t  + m\left( \frac{p^{h_{n-2}}}{q^{h_{n-3}}}\right)^t\right)^{1/t}
	\end{equation*}
	We first observe that 
	\begin{equation*}
		\lim_{t\to 0^+} g(t) = \infty\quad\mbox{and}\quad \lim_{t\to\infty} g(t) = m\left( \frac{p^{h_{n-1}}}{q^{h_{n-2}}}\right),
	\end{equation*}
	so we certainly have that
	\begin{equation*}
		\lim_{t\to 0^+} g(t) > m\left( \frac{p^{h_n}}{q^{h_{n-1}}}\right) > \lim_{t\to\infty} g(t).
	\end{equation*}
	It can easily verified that $g$ is strictly decreasing so that the first statement of Proposition follows.  To see the second statement, we notice that
	\begin{align*}
		g(1) & = \max\{\log p^{h_{n-1}},\log q^{h_{n-2}}\} + \max\{\log p^{h_{n-2}},\log q^{h_{n-3}}\} \\
			& \geq \max\{\log p^{h_{n-1}} + \log p^{h_{n-2}},\log q^{h_{n-2}} + \log q^{h_{n-3}}\} \\
			& = \max\{\log p^{h_{n}},\log q^{h_{n-1}}\}.
	\end{align*}
	These observations yield
	\begin{equation*}
		g(1) \geq m\left( \frac{p^{h_n}}{q^{h_{n-1}}}\right)
	\end{equation*}
	and the result follows immediately.
\end{proof}

In Sections \ref{Intro} and \ref{ConjecturedReplace}, we noted that many of our definitions depended on particular choices of $N$, $p$ and $q$.  However, we often suppressed that dependency in order 
to prevent the notation from becoming excessively cumbersome.  Unfortunately, the most natural proof of Theorem \ref{IntersectionWeaving} studies 
the behavior of measure functions as $p$ and $q$ are chosen so that $\log q/\log p$ approaches the golden ratio.  As a result, we must employ more robust notation than we had previously used.

In view of these observations, we shall now write $t_n(p,q)$ to denote the unique positive real number such that
\begin{equation*}
	m\left( \frac{p^{h_n}}{q^{h_{n-1}}}\right)^{t_n(p,q)} = m\left( \frac{p^{h_{n-1}}}{q^{h_{n-2}}}\right)^{t_n(p,q)}   + m\left( \frac{p^{h_{n-2}}}{q^{h_{n-3}}}\right)^{t_n(p,q)}.
\end{equation*}
From Proposition \ref{UniqueIntersection} we know that $t_n(p,q) \geq 1$.  We must now consider an analog of Proposition \ref{UniqueIntersection} which does not depend on primes $p$ and $q$.
We let $\phi$ denote the golden ratio.

\begin{lem}\label{TargetUniqueIntersection}
	If $n\geq 3$ then the exists a unique positive real number $t$ such that
	\begin{equation*}
		\max\{h_n,\phi h_{n-1}\}^t = \max\{h_{n-1},\phi h_{n-2}\}^t + \max\{h_{n-2},\phi h_{n-3}\}^t,
	\end{equation*}
	and moreover, $t\geq 1$.
\end{lem}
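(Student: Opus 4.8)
The plan is to mimic the proof of Proposition~\ref{UniqueIntersection} almost verbatim, performing the substitution ``$\log p$ by $1$ and $\log q$ by $\phi$''; in effect each quantity $m(p^{h_i}/q^{h_{i-1}})$ is replaced throughout by $\max\{h_i,\phi h_{i-1}\}$. Concretely, I would define $g:(0,\infty)\to(0,\infty)$ by
\begin{equation*}
	g(t) = \left( \max\{h_{n-1},\phi h_{n-2}\}^t + \max\{h_{n-2},\phi h_{n-3}\}^t \right)^{1/t},
\end{equation*}
noting that both bases are strictly positive for $n\geq 3$ (indeed $h_{n-1}\geq h_2 = 1$ and $h_{n-2}\geq h_1 = 1$), so that $g$ is genuinely the $\ell^t$-norm of a fixed vector in $(0,\infty)^2$ and is therefore continuous and strictly decreasing in $t$. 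Unlike in Proposition~\ref{UniqueIntersection}, no compatibility hypothesis on primes is needed here, since $\phi$ lies strictly between consecutive Fibonacci ratios and the inequalities below hold unconditionally.

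Next I would record the two limits
\begin{equation*}
	\lim_{t\to 0^+} g(t) = \infty \qquad\text{and}\qquad \lim_{t\to\infty} g(t) = \max\{h_{n-1},\phi h_{n-2}\},
\end{equation*}
the second because $\max\{h_{n-1},\phi h_{n-2}\}\geq \max\{h_{n-2},\phi h_{n-3}\}$, the first vector dominating the second coordinatewise. Then I would verify the sandwiching inequality
\begin{equation*}
	\lim_{t\to 0^+} g(t) > \max\{h_n,\phi h_{n-1}\} > \lim_{t\to\infty} g(t);
\end{equation*}
the left inequality is trivial, and the right one follows from $h_n > h_{n-1}$ together with $\phi h_{n-1}\geq \phi h_{n-2}$, which force $\max\{h_n,\phi h_{n-1}\} > \max\{h_{n-1},\phi h_{n-2}\}$. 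Since $g$ is continuous and strictly decreasing, the intermediate value theorem then yields a unique $t$ with $g(t) = \max\{h_n,\phi h_{n-1}\}$, i.e.\ the equation in the statement.

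Finally, for the assertion $t\geq 1$, I would evaluate
\begin{equation*}
	g(1) = \max\{h_{n-1},\phi h_{n-2}\} + \max\{h_{n-2},\phi h_{n-3}\} \geq \max\{h_{n-1}+h_{n-2},\; \phi h_{n-2}+\phi h_{n-3}\} = \max\{h_n,\phi h_{n-1}\},
\end{equation*}
using $\max\{a,b\}+\max\{c,d\}\geq\max\{a+c,b+d\}$ and the Fibonacci recurrence $h_{n-1}+h_{n-2}=h_n$ (applied also after factoring out $\phi$). Since $g$ is strictly decreasing and $g(1)\geq \max\{h_n,\phi h_{n-1}\} = g(t)$, we conclude $t\geq 1$. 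I do not expect any genuine obstacle; the only point needing a moment's care is the edge case $n=3$, where $h_0=0$ and $h_1=h_2=1$, so the strict inequality $\max\{h_3,\phi h_2\}=2>\phi=\max\{h_2,\phi h_1\}$ should be checked directly rather than via coordinatewise strictness.
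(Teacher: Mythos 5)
Your proposal is correct and is exactly what the paper intends: the paper omits the proof, stating only that it is ``extremely similar to that of Proposition~\ref{UniqueIntersection},'' and your argument is that proof with $m(p^{h_i}/q^{h_{i-1}})$ replaced by $\max\{h_i,\phi h_{i-1}\}$. Your explicit check of the $n=3$ edge case (where $h_1=h_2$ forces the strict inequality $\max\{h_3,\phi h_2\}>\max\{h_2,\phi h_1\}$ to be verified directly) is a welcome bit of care that the paper does not spell out.
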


The proof of Lemma \ref{TargetUniqueIntersection} is extremely similar to that of Proposition \ref{UniqueIntersection} so we need not include it here.  We shall now write $s_n$ to denote the
unique positive real number such that
\begin{equation} \label{sn}
	\max\{h_n,\phi h_{n-1}\}^{s_n}= \max\{h_{n-1},\phi h_{n-2}\}^{s_n} + \max\{h_{n-2},\phi h_{n-3}\}^{s_n},
\end{equation}
and note that $s_n\geq 1$.  In order to establish Theorem \ref{IntersectionWeaving}, we must prove that $t_n(p,q) > t_{n+1}(p,q)$ for all $3\leq n\leq N$ provided that $\log q/\log p$ is sufficiently
close to the golden ratio.  To this end, we shall first prove the following lemma.

\begin{lem}\label{TargetCompatible}
	$s_n > s_{n+1}$ for all $n\geq 3$.
\end{lem}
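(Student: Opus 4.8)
The plan is to get explicit control over $s_n$ by evaluating the relevant maxima. First I would record that since $\phi$ is the golden ratio, we have $h_n < \phi h_{n-1}$ exactly when $h_n/h_{n-1} < \phi$, which happens precisely for even $n$ (the ratios $h_n/h_{n-1}$ oscillate around $\phi$, lying below it for even $n$ and above it for odd $n$, for $n$ large enough; for small $n$ one checks directly). Thus the three maxima $\max\{h_k,\phi h_{k-1}\}$ for $k=n,n-1,n-2$ take the values $h_k$ or $\phi h_{k-1}$ according to the parity of $k$, and exactly one of $\{n,n-1\}$ is even. This lets me rewrite the defining equation \eqref{sn} in one of two closed forms depending on the parity of $n$: roughly, either $(\phi h_{n-1})^{s_n} = h_{n-1}^{s_n} + (\phi h_{n-3})^{s_n}$ or $h_n^{s_n} = (\phi h_{n-2})^{s_n} + h_{n-2}^{s_n}$, and similarly for $s_{n+1}$ with the opposite parity.

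Next, following the strategy already used in the proof of Proposition \ref{UniqueIntersection}, I would introduce for each $n$ the strictly decreasing function
\begin{equation*}
	G_n(t) = \left(\frac{\max\{h_{n-1},\phi h_{n-2}\}^{t} + \max\{h_{n-2},\phi h_{n-3}\}^{t}}{\max\{h_n,\phi h_{n-1}\}^{t}}\right)^{1/t},
\end{equation*}
so that $s_n$ is the unique $t$ with $G_n(t) = 1$. Since $G_{n+1}$ is also strictly decreasing and equals $1$ at $s_{n+1}$, to prove $s_n > s_{n+1}$ it suffices to show $G_{n+1}(s_n) < 1$, i.e. that at the point $t = s_n$ the $(n+1)$-indexed combination already undershoots. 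Equivalently, using $G_n(s_n)=1$, it suffices to prove the inequality
\begin{equation*}
	\frac{\max\{h_n,\phi h_{n-1}\}^{s_n} + \max\{h_{n-1},\phi h_{n-2}\}^{s_n}}{\max\{h_{n+1},\phi h_n\}^{s_n}} < \frac{\max\{h_{n-1},\phi h_{n-2}\}^{s_n} + \max\{h_{n-2},\phi h_{n-3}\}^{s_n}}{\max\{h_n,\phi h_{n-1}\}^{s_n}}.
\end{equation*}
I would clear denominators and substitute the parity-dependent closed forms above, reducing the claim to a polynomial-type inequality in the quantities $h_k^{s_n}$ and $\phi^{s_n}$, which I then attack using the Fibonacci recurrence $h_{k+1}=h_k+h_{k-1}$, the identity $\phi^2=\phi+1$, and the bound $s_n\geq 1$ from Lemma \ref{TargetUniqueIntersection} together with a crude upper bound on $s_n$ (for instance $s_n$ is bounded because $G_n(t)\to \max\{h_{n-1},\phi h_{n-2}\}/\max\{h_n,\phi h_{n-1}\} < 1$ as $t\to\infty$, so $s_n$ cannot be too large; a uniform bound like $s_n \le s_3$ may also be extractable and convenient). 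The exponent $s_n$ is not an integer, so I would phrase the estimates via the function $t\mapsto a^t$ being convex/monotone rather than via algebraic manipulation of powers.

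The main obstacle I anticipate is the parity bookkeeping combined with the non-integer exponent: the inequality to be verified changes shape depending on whether $n$ is even or odd, so the argument splits into two cases, and in each case one must bound a sum of three terms $\lambda^t$ (with $\lambda$ one of a few explicit Fibonacci-type constants) against another such sum, uniformly for $t = s_n$ in a range like $[1, s_3]$. I expect this to come down to showing that a certain ratio $\psi(t)$ of the form $(A^t + B^t)/(C^t+D^t)$ is monotone or stays on one side of a constant throughout the relevant $t$-range; establishing that monotonicity (likely by a logarithmic-derivative computation, or by a term-by-term comparison exploiting that the dominant bases satisfy $C > A,B$) is the technical heart. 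If a clean uniform bound on $s_n$ is awkward, a fallback is to prove the inequality for all $t\geq 1$ outright, which is stronger than needed but removes the dependence on locating $s_n$ precisely.
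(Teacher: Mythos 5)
Your setup is sound and matches the paper's: the parity analysis of the maxima is correct, and the parity-dependent closed forms you write down are exactly the two pairs of equations the paper extracts (for $n$ odd, $h_n^{s_n}=(\phi h_{n-2})^{s_n}+h_{n-2}^{s_n}$ and $(\phi h_n)^{s_{n+1}}=h_n^{s_{n+1}}+(\phi h_{n-2})^{s_{n+1}}$, and the analogous pair for $n$ even). Your reduction is also valid: since $G_{n+1}$ is strictly decreasing with root $s_{n+1}$, showing $G_{n+1}(s_n)<1$ does give $s_n>s_{n+1}$. However, the proposal stops at the point where the real work begins, and the tools you propose for that work would not suffice. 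The inequality $G_{n+1}(s_n)<1$ is asymptotically tight: in the odd case, substituting the defining relation $y_n^{s_n}=\phi^{s_n}+1$ (with $y_n=h_n/h_{n-2}$) collapses your inequality to $y_n^{s_n}>\phi^2$, i.e.\ exactly to $s_n>1$ --- no ``crude upper bound on $s_n$'' enters, but also no slack remains, since $y_n\to\phi^2$ and $s_n\to1$. So a term-by-term or convexity estimate with rough constants cannot close the argument; you must use the defining equation of $s_n$ itself. Your fallback of proving the inequality ``for all $t\geq1$ outright'' fails in the form $G_{n+1}(t)<1$, because that holds precisely for $t>s_{n+1}$ and $s_{n+1}>1$; in the form $G_{n+1}(t)<G_n(t)$ for all $t\ge1$ it is a two-parameter comparison at least as hard as the lemma and you give no argument for it. The even case is worse: there the reduction does not collapse to anything as clean as $s_n>1$, and the paper needs a genuinely delicate step (an inequality with equality exactly at $s_n=1$ together with strict monotonicity of its right-hand side, forcing $s_n=1$ and a contradiction with the defining equation).

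For comparison, the paper argues by contradiction: assuming $s_n\le s_{n+1}$, it isolates $\phi$ from each of the two defining equations, uses the monotonicity in $t$ of one-parameter functions such as $t\mapsto(y^t-1)^{1/t}$ and $t\mapsto(\phi^t+1)^{1/t}$ to compare the two expressions, and reduces everything to $\phi^2-\phi-1=0$, the identity $h_{n+1}=3h_{n-1}-h_{n-3}$, and the interlacing of Fibonacci ratios around $\phi$. That structure is what lets it survive the tightness of the inequality. Your plan could likely be completed in the odd case along the lines above, but as written it has a genuine gap in both cases, and in the even case no workable mechanism is identified.
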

\begin{proof}
	We shall prove the lemma by contradiction so suppose that $n\geq 3$ is such that $s_n \leq s_{n+1}$ and consider two cases.
	
	{\bf Case 1}:  We assume first that $n$ is odd so that
	\begin{equation} \label{OddOrder}
		 \frac{h_{n-1}}{h_{n-2}} < \frac{h_{n+1}}{h_n} < \phi < \frac{h_n}{h_{n-1}} < \frac{h_{n-2}}{h_{n-3}}.
	\end{equation}
	where we utilize the convention that $h_1/h_0 = \infty$ so that these inequalities necessarily make sense.  In this situation, we apply the definitions of $s_n$ and $s_{n+1}$
	to obtain that
	\begin{equation*}
		h_n^{s_n} = (\phi h_{n-2})^{s_n} + h_{n-2}^{s_n}\quad\mbox{and}\quad (\phi h_n)^{s_{n+1}} = h_n^{s_{n+1}} + (\phi h_{n-2})^{s_{n+1}}.
	\end{equation*}
	Then setting $y_n = h_{n}/h_{n-2}$ we are lead to 
	\begin{equation*}
		\phi = \left( y_n^{s_n} - 1\right)^{1/s_n} \quad\mbox{and}\quad \phi = \frac{y_n}{(y_n^{s_{n+1}} - 1 )^{1/s_{n+1}}}
	\end{equation*}
	The function $t\mapsto (y_n^t - 1)^{1/t}$ is easily shown to be increasing, and therefore, we conclude that
	\begin{equation*}
		\phi \leq \frac{y_n}{(y_n^{s_{n}} - 1 )^{1/s_{n}}} = \frac{y_n}{\phi}.
	\end{equation*}
	These inequalities yield
	\begin{equation*}
		0 = \phi^2 - \phi - 1 \leq y_n - \phi - 1 = \frac{h_n}{h_{n-2}} - 1 - \phi = \frac{h_{n-1}}{h_{n-2}} - \phi.
	\end{equation*}
	By \eqref{OddOrder}, the right hand side of these inequalities is negative, a contradiction.
	
	{\bf Case 2}: We must now suppose that $n$ is even so that
	\begin{equation*}
		 \frac{h_{n-2}}{h_{n-3}} < \frac{h_{n}}{h_{n-1}} < \phi < \frac{h_{n+1}}{h_{n}} < \frac{h_{n-1}}{h_{n-2}}.
	\end{equation*}
	In this case, we must have that $h_{n-3} > 0$ so that the above inequalities make sense.  As in the previous case, we apply the definition of $s_n$ and $s_{n+1}$, but in this case
	we obtain
	\begin{equation*}
		(\phi h_{n-1})^{s_n} = h_{n-1}^{s_n} + (\phi h_{n-3})^{s_n}\quad\mbox{and}\quad h_{n+1}^{s_{n+1}} = (\phi h_{n-1})^{s_{n+1}} + h_{n-1}^{s_{n+1}}.
	\end{equation*}
	Isolating $h_{n-1}/h_{n-3}$ and $h_{n+1}/h_{n-1}$ we find that
	\begin{equation} \label{DoubleEquality}
		\frac{h_{n-1}}{h_{n-3}} = \left(1 - \phi^{-s_n}\right)^{-1/s_n}\quad \mbox{and}\quad \frac{h_{n+1}}{h_{n-1}} = \left(\phi^{s_{n+1}} + 1\right)^{1/s_{n+1}}.
	\end{equation}
	We now note that $t\mapsto (\phi^t + 1)^{1/t}$ is decreasing so that 
	\begin{equation*}
		\frac{h_{n+1}}{h_{n-1}} \leq  \left(\phi^{s_{n}} + 1\right)^{1/s_{n}}.
	\end{equation*}
	Next, we observe that $h_{n+1} = 3h_{n-1} - h_{n-3}$ and deduce that
	\begin{equation*}
		3 - \frac{h_{n-3}}{h_{n-1}} = \frac{3h_{n-1} - h_{n-3}}{h_{n-1}} = \frac{h_{n+1}}{h_{n-1}} \leq \left(\phi^{s_{n}} + 1\right)^{1/s_{n}}.
	\end{equation*}
	Now using the left hand equation of \eqref{DoubleEquality}, we obtain that 
	\begin{equation} \label{FakeInequality}
		3 \leq \left(1 - \phi^{-s_n}\right)^{1/s_n} + \left(\phi^{s_{n}} + 1\right)^{1/s_{n}}
	\end{equation}
	We can check that $s_n = 1$ provides equality in the inequality \eqref{FakeInequality}, and moreover, the right hand side of the inequality is strictly decreasing as a function of $s_n$.
	These assertions force $s_n =1$ and contradict the left hand equality of \eqref{DoubleEquality}.
\end{proof}

Our next goal is to show that $t_n(p,q)$ is as close as we like to $s_n$ provided that $\log q/\log p$ is sufficiently close to $\phi$.  This assertion, made rigorous in the following lemma, mostly 
completes the proof of Theorem \ref{IntersectionWeaving}.

\begin{lem}\label{TApprox}
	Let $\varepsilon > 0$ and $n\geq 3$ be an integer.  There exists $\delta > 0$ such that if $|\log q/\log p - \phi| < \delta$ then $|s_n - t_n(p,q)| < \varepsilon$.
\end{lem}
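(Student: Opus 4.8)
The plan is to observe that $t_n(p,q)$ depends on the primes $p$ and $q$ only through the single real parameter $r = \log q/\log p$, after which the lemma reduces to a continuity statement for an implicitly defined root. Since $p$ and $q$ are distinct primes, $\gcd(p^{h_i},q^{h_{i-1}}) = 1$, and hence
\[
	m\!\left(\frac{p^{h_i}}{q^{h_{i-1}}}\right) = \log\max\{p^{h_i},q^{h_{i-1}}\} = (\log p)\cdot\max\{h_i, h_{i-1}r\}.
\]
Dividing the equation that defines $t_n(p,q)$ by $(\log p)^{t}$ therefore shows that $t = t_n(p,q)$ is the unique positive solution of
\[
	\max\{h_n, h_{n-1}r\}^{t} = \max\{h_{n-1}, h_{n-2}r\}^{t} + \max\{h_{n-2}, h_{n-3}r\}^{t},
\]
and comparing with \eqref{sn}, the number $s_n$ is the solution of this same equation with $r$ replaced by $\phi$. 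So it suffices to show that the solution of this equation, viewed as a function of $r$, is continuous at $r = \phi$.

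To carry this out I would introduce
\[
	g_r(t) = \left(\max\{h_{n-1}, h_{n-2}r\}^{t} + \max\{h_{n-2}, h_{n-3}r\}^{t}\right)^{1/t}, \qquad H(r,t) = g_r(t) - \max\{h_n, h_{n-1}r\}.
\]
Because each of the three $\max$ expressions is a continuous and strictly positive function of $r$ on $(0,\infty)$ (here $n\geq 3$, so $h_n,h_{n-1},h_{n-2}\geq 1$), and because $(x,t)\mapsto x^t$ and $(y,t)\mapsto y^{1/t}$ are continuous on $(0,\infty)^2$, the function $H$ is jointly continuous on $(0,\infty)\times(0,\infty)$. For each fixed $r$ the map $t\mapsto g_r(t)$ is strictly decreasing, proved verbatim as in Proposition \ref{UniqueIntersection} (equivalently, as in Lemma \ref{TargetUniqueIntersection}). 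In this notation $s_n$ is the unique zero of $t\mapsto H(\phi,t)$, while $t_n(p,q)$ is the unique zero of $t\mapsto H(r,t)$ for $r = \log q/\log p$.

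Now fix $\varepsilon > 0$; since the conclusion for small $\varepsilon$ implies it for larger $\varepsilon$, I may assume $\varepsilon < 1 \leq s_n$, so that $s_n - \varepsilon > 0$. Strict monotonicity of $g_\phi$ together with the identity $g_\phi(s_n) = \max\{h_n,h_{n-1}\phi\}$ gives $H(\phi, s_n - \varepsilon) > 0$ and $H(\phi, s_n + \varepsilon) < 0$. By joint continuity of $H$ in its first argument there is $\delta > 0$ such that $|r - \phi| < \delta$ forces $H(r, s_n - \varepsilon) > 0$ and $H(r, s_n + \varepsilon) < 0$. For such $r$ the intermediate value theorem, applied to the continuous function $t\mapsto H(r,t)$, produces a zero in the open interval $(s_n - \varepsilon, s_n + \varepsilon)$; taking $r = \log q/\log p$ and invoking the uniqueness part of Proposition \ref{UniqueIntersection} (or, for general $r$, the strict monotonicity of $g_r$), that zero must equal $t_n(p,q)$. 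Hence $|s_n - t_n(p,q)| < \varepsilon$ whenever $|\log q/\log p - \phi| < \delta$, which is the claim.

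I do not expect a genuine obstacle: the argument is a hands-on substitute for the implicit function theorem, using the strict monotonicity in $t$ already established in Proposition \ref{UniqueIntersection} in place of differentiability, which is the right move since the $\max$ functions are only piecewise smooth. The only points needing care are the bookkeeping reduction to $\varepsilon < 1$ (so that $s_n - \varepsilon$ stays in the domain, harmless because $s_n \geq 1$) and checking that the trapping inequalities at $t = s_n \pm \varepsilon$ survive a small perturbation of $r$, which is immediate from the joint continuity of $H$.
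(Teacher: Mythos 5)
Your proof is correct, and it takes a genuinely different route from the paper's. Both arguments start from the same normalization --- dividing the defining equation of $t_n(p,q)$ by $(\log p)^{t}$ so that everything depends on the primes only through $r=\log q/\log p$ --- but from there the paper argues by contradiction with sequences: it supposes prime pairs with $\log q_k/\log p_k\to\phi$ and $|s_n-t_n(p_k,q_k)|\ge\varepsilon$, proves that the sequence $r_k=t_n(p_k,q_k)$ is bounded (a case analysis on the parity of $n$ that is the most technical part of that proof), extracts a convergent subsequence, and passes to the limit using joint continuity to contradict the uniqueness in Lemma \ref{TargetUniqueIntersection}. Your direct trapping argument --- $H(\phi,s_n-\varepsilon)>0$ and $H(\phi,s_n+\varepsilon)<0$, both inequalities stable under a small perturbation of $r$ by joint continuity, followed by the intermediate value theorem and the uniqueness of the root of $t\mapsto H(r,t)$ --- bypasses the boundedness step entirely, because the candidate root is confined to $(s_n-\varepsilon,s_n+\varepsilon)$ from the outset. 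The only ingredients you need beyond joint continuity are the strict monotonicity of $t\mapsto g_r(t)$ (the same fact already used in Proposition \ref{UniqueIntersection}) and the harmless reduction to $\varepsilon<1\le s_n$ so that $s_n-\varepsilon$ stays in the domain. What the paper's compactness approach buys is some extra qualitative information about the behavior of $t_n(p_k,q_k)$ along sequences of prime pairs; what yours buys is a shorter, more elementary, and in principle effective argument, since $\delta$ is controlled by the modulus of continuity of $H$ in $r$ at the two explicit points $(\phi,s_n\pm\varepsilon)$.
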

\begin{proof}
	Suppose that the assertion is false so there exists a sequence $\{(p_k,q_k)\}_{k=1}^\infty$ of pairs of primes such that
	\begin{equation*}
		\lim_{k\to\infty} \frac{\log q_k}{\log p_k} = \phi\quad\mbox{and}\quad |s_n - t_n(p_k,q_k)| \geq \varepsilon.
	\end{equation*}
	For simplicity, we shall now write $r = s_n$ and $r_k = t_n(p_k,q_k)$ so we have that $|r - r_k| \geq \varepsilon$ for all $k\in \nat$.  By definition of $r_k$ we obtain that
	\begin{equation*}
		m\left( \frac{p_k^{h_n}}{q_k^{h_{n-1}}}\right) = \left(m\left( \frac{p_k^{h_{n-1}}}{q_k^{h_{n-2}}}\right)^{r_k}  + m\left( \frac{p_k^{h_{n-2}}}{q_k^{h_{n-3}}}\right)^{r_k}\right)^{1/r_k},
	\end{equation*}
	which simplifies to
	\begin{equation} \label{PQApproach}
		\max\left\{ h_n, \frac{\log q_k}{\log p_k} h_{n-1}\right\} = \left(\max\left\{ h_{n-1}, \frac{\log q_k}{\log p_k} h_{n-2}\right\}^{r_k} + \max\left\{ h_{n-2}, \frac{\log q_k}{\log p_k} h_{n-3}\right\}^{r_k}\right)^{1/r_k}.
	\end{equation}
	
	Before proceeding, we claim that $\{r_k\}$ is a bounded sequence.  By Proposition \ref{UniqueIntersection}, we have that $r_k \geq 1$, so it is sufficient to show that $\{r_k\}$ is bounded from above.  
	If $\{r_k\}$ is not bounded from above, there exists a subsequence $\{r_{k_i}\}$ such that $r_{k_i}\to \infty$ as $i \to \infty$.  Assuming first that $n$ is odd, we use the fact that
	$\lim_{k\to\infty} \log q_k/\log p_k = \phi$ to assume without loss of generality that
	\begin{equation*}
		\frac{h_{n+1}}{h_{n}} < \frac{\log q_{k_i}}{\log p_{k_i}} < \frac{h_{n+2}}{h_{n+1}} < \frac{h_{n}}{h_{n-1}}.
	\end{equation*}
	Now applying \eqref{PQApproach} we deduce that
	\begin{align*}
		h_n  = \max\left\{ h_n, \frac{\log q_{k_i}}{\log p_{k_i}} h_{n-1}\right\}
			 & \leq  \left(\left(  \frac{h_{n+2} h_{n-2}}{h_{n+1}}\right)^{r_{k_i}} + h_{n-2}^{r_{k_i}}\right)^{1/r_{k_i}}.
	\end{align*}
	We take the limit of both sides as $i\to \infty$ to obtain that
	\begin{equation*}
		h_n \leq  \lim_{i\to\infty} \left(\left(  \frac{h_{n+2} h_{n-2}}{h_{n+1}}\right)^{r_{k_i}} + h_{n-2}^{r_{k_i}}\right)^{1/r_{k_i}}
			= \lim_{t\to\infty} \left(\left(  \frac{h_{n+2} h_{n-2}}{h_{n+1}}\right)^{t} + h_{n-2}^{t}\right)^{1/t} = \frac{h_{n+2} h_{n-2}}{h_{n+1}} < \frac{h_n h_{n-2}}{h_{n-1}}
	\end{equation*}
	which leads to $h_{n-1} < h_{n-2}$, a contradiction.  In case $n$ is even, we assume that
	\begin{equation*}
		\frac{h_{n}}{h_{n-1}} < \frac{\log q_{k_i}}{\log p_{k_i}} < \frac{h_{n+1}}{h_{n}} < \frac{h_{n-1}}{h_{n-2}}.
	\end{equation*}
	Under these assumptions, we obtain that
	\begin{equation*}
		\frac{\log q_{k_i}}{\log p_{k_i}} h_{n-1} = \max\left\{ h_n, \frac{\log q_{k_i}}{\log p_{k_i}} h_{n-1}\right\} \leq 
			\left( h_{n-1}^{r_{k_i}} + \left( \frac{h_{n-1}h_{n-3}}{h_{n-2}}\right)^{r_{k_i}}\right)^{1/r_{k_i}}.
	\end{equation*}
	and taking limits of both sides as $i\to \infty$ yields $\phi h_{n-1} \leq h_{n-1}$ another contradiction.  Hence, we have now established that $\{r_k\}$ is bounded.
		
	By possibly replacing $\{r_k\}$ with a convergent subsequence, we may assume without loss of generality that $\lim_{k\to\infty} r_k = r'$.  Moreover, since $r_k\geq 1$ for all $k$ we know that $r'\geq 1$.
	Now define $f:(0,\infty)\times(0,\infty)\to \real$ by
	\begin{equation*}
		f(x,t) = \left( \max\{h_{n-1},xh_{n-2}\}^t + \max\{h_{n-2},xh_{n-3}\}^t \right)^{1/t}
	\end{equation*}
	so that $f$ is continuous at all points $(x,t)$ in its domain with respect to the usual Euclidean norm.  We observe that \eqref{PQApproach} now becomes
	\begin{equation} \label{PQApproach2}
		\max\left\{ h_n, \frac{\log q_k}{\log p_k} h_{n-1}\right\} = f\left( \frac{\log q_k}{\log p_k},r_k\right)
	\end{equation}
	and using the continuity of $f$ we get that
	\begin{equation} \label{Continuity}
		\lim_{k\to\infty} f\left( \frac{\log q_k}{\log p_k},r_k\right) = f\left(\lim_{k\to\infty}\left( \frac{\log q_k}{\log p_k},r_k\right)\right) = f(\phi,r').
	\end{equation}
	Taking limits of both sides of \eqref{PQApproach2} as $k\to \infty$ and applying \eqref{Continuity} we obtain that
	\begin{equation*}
		\max\left\{ h_n, \phi h_{n-1}\right\}  = \lim_{k\to\infty} \max\left\{ h_n, \frac{\log q_k}{\log p_k} h_{n-1}\right\} = \lim_{k\to\infty} f\left( \frac{\log q_k}{\log p_k},r_k\right) = f(\phi,r')
	\end{equation*}
	which is equivalent to
	\begin{equation*}
		\max\left\{ h_n, \phi h_{n-1}\right\} ^{r'} = \max\{h_{n-1},\phi h_{n-2}\}^{r'} + \max\{h_{n-2},\phi h_{n-3}\}^{r'}.
	\end{equation*}
	Now using the definition of $r = s_n$ and the uniqueness established in Lemma \ref{TargetUniqueIntersection}, we conclude that $r = r'$ so that
	$\lim_{k\to\infty} r_k = r$ contradicting our assumption that $|r_k - r| \geq \varepsilon$.
\end{proof}

With Lemmas \ref{TargetCompatible} and \ref{TApprox} we have finished the majority of the proof of Theorem \ref{IntersectionWeaving}.  We include the remainder of that proof now.

\begin{proof}[Proof of Theorem \ref{IntersectionWeaving}]
	Let $\varepsilon = \min\{(s_n - s_{n+1})/2: 3\leq n\leq N\}$ and note that Lemma \ref{TargetCompatible} implies that $\varepsilon > 0$.  From Lemma \ref{TApprox} there must exist $\delta > 0$ such that if 
	$|\log q/\log p - \phi| < \delta$ then $|s_n - t_n(p,q)| < \varepsilon$.  Assuming that $q$ and $p$ are such that $|\log q/\log p - \phi| < \delta$ we obtain that
	\begin{equation*}
		s_n - t_n(p,q) \leq  |s_n - t_n(p,q)| < \varepsilon < \frac{1}{2}(s_n - s_{n+1})
	\end{equation*}
	and
	\begin{equation*}
		t_{n+1}(p,q) - s_{n+1} \leq |s_{n+1} - t_{n+1}(p,q)| < \varepsilon < \frac{1}{2}(s_n - s_{n+1}).
	\end{equation*}
	Adding these inequalities we find that $s_n - t_n(p,q) + t_{n+1}(p,q) - s_{n+1} < s_n - s_{n+1}$ and the result follows.
\end{proof}

We find it worth noting that we are aware of a more direct proof of Theorem \ref{IntersectionWeaving}.  Specifically, there is a proof which does not require the use of the points $s_n$
or Lemmas \ref{TargetUniqueIntersection} and \ref{TargetCompatible}.  Nevertheless, we find the above proof to be more informative because it establishes not only that $t_n(p,q)$ satisfy the required
inequalities, but also that these points approach $s_n$ as $\log q/\log p \to \phi$.  As a result, in order to study the structure of the set $\{t_3(p,q),t_4(p,q),\ldots,t_{N}(p,q)\}$, it may instead be possible
to study $\{s_3,s_4,s_5,\ldots\}$.  Our above proof of Theorem \ref{IntersectionWeaving} is an example of this strategy.


\section{Proof of Theorem \ref{ConjectureConsequences}}

For the remainder of this article, we shall return to the notation utilized in Sections \ref{Intro}, \ref{ConjecturedReplace} and \ref{Progress}.  Specifically, we assume that $N\geq 3$ is an integer and that 
$\mathcal V_n$ and $\mathcal S_n$ are defined as above for $1\leq n\leq N$.   Additionally, we suppose that $p$ and $q$ are 
primes satisfying \eqref{WeakCompatible}, and if $3\leq n\leq N$, we let $t_n$ be the unique positive real number such that
\begin{equation*}
	m\left( \frac{p^{h_n}}{q^{h_{n-1}}}\right)^{t_n} = m\left( \frac{p^{h_{n-1}}}{q^{h_{n-2}}}\right)^{t_n}  + m\left( \frac{p^{h_{n-2}}}{q^{h_{n-3}}}\right)^{t_n}.
\end{equation*}
By Proposition \ref{UniqueIntersection} we know that $t_n\geq 1$.  Our proof of Theorem \ref{ConjectureConsequences} utilizes a result which describes $\min\{f_\xx(t):\xx\in \mathcal S_n\}$
as a piecewise function using measure functions.

\begin{thm} \label{MinimumOver}
	Suppose that  $N$ is a positive integer and $(p,q)$ is a pair of primes compatible with $N$.  If $3\leq n\leq N$ then
	\begin{equation*}
		\min\{f_\xx(t):\xx\in \mathcal S_n\} = \begin{cases} f_{\xx_n(3)}(t) & \mathrm{if}\ t_{3} \leq t \\
												f_{\xx_n(i)}(t) & \mathrm{if}\ t_{i} \leq t \leq t_{i-1}\ \mathrm{for\ some}\ 4\leq i\leq n \\
												f_{\xx_n(n+1)}(t) & \mathrm{if}\ t \leq t_{n}.
									\end{cases}
	\end{equation*}
	Moreover, if $t\not\in \{t_3,t_4,\ldots,t_n\}$ then there exists a unique point $\zz\in \mathcal S_n$ such that $f_{\zz}(t) = \min\{f_\xx(t):\xx\in \mathcal S_n\}$.
\end{thm}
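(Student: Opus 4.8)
The plan is to analyze the finitely many measure functions $f_{\xx_n(i)}(t)$ for $3 \le i \le n+1$ directly and compare them pairwise. First I would record explicitly what each $f_{\xx_n(i)}$ is: since $\xx_n(i) = (0,\ldots,0,h_{n+1-i},h_{n+2-i})$ with the two nonzero entries in positions $i-2$ and $i-1$, we have
\begin{equation*}
	f_{\xx_n(i)}(t) = \left( h_{n+1-i}\, m\!\left(\tfrac{p^{h_{i-2}}}{q^{h_{i-3}}}\right)^t + h_{n+2-i}\, m\!\left(\tfrac{p^{h_{i-1}}}{q^{h_{i-2}}}\right)^t \right)^{1/t},
\end{equation*}
with the degenerate case $f_{\xx_n(n+1)}(t) = m\!\left(\tfrac{p^{h_n}}{q^{h_{n-1}}}\right)$. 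The key algebraic input is the recurrences \eqref{StandardReducible} and \eqref{SpecialReducible}, which give, because measure functions of product representations add coordinatewise and $f$ is a norm, the subadditivity-type identities $f_{\xx_n(i)}(t)^t \le f_{\xx_{n-1}(i)}(t)^t + f_{\xx_{n-2}(i)}(t)^t$ and analogous relations linking consecutive vectors in $\mathcal S_n$.

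The heart of the argument is to show that consecutive functions $f_{\xx_n(i)}$ and $f_{\xx_n(i+1)}$ cross exactly once, at $t = t_i$, and that the ordering of crossings is governed precisely by the compatibility hypothesis $t_{N+1} < t_N < \cdots < t_3$. Concretely I would proceed by downward induction on $i$ (or equivalently induction on $n - i$). The base case compares $f_{\xx_n(n+1)}$, a constant, with $f_{\xx_n(n)}$; by definition $t_n$ (via Proposition \ref{UniqueIntersection}, after stripping the common string of zeros that does not affect the measure function) is exactly the point where these agree, and a monotonicity argument — $f_{\xx_n(n)}(t)^t - f_{\xx_n(n+1)}(t)^t$ changes sign once — shows $f_{\xx_n(n+1)} \le f_{\xx_n(n)}$ for $t \le t_n$ and the reverse for $t \ge t_n$. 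For the inductive step, I would show that on the interval $[t_i, t_{i-1}]$ the function $f_{\xx_n(i)}$ lies below all of $f_{\xx_n(i')}$ with $i' \ne i$: below those with larger index by the inductive comparison at the right endpoint plus monotonicity of the relevant difference, and below those with smaller index by a symmetric argument using that $t_{i'} > t_{i-1}$ for $i' < i$. Here the single-crossing property is what needs care: I expect to establish that for each adjacent pair, $g(t) := f_{\xx_n(i+1)}(t)^t / f_{\xx_n(i)}(t)^t$ (or an additive analog) is strictly monotone in $t$, which reduces to the elementary fact used repeatedly in Section \ref{FirstProofs} that $t \mapsto (a^t + b^t)^{1/t}$ is strictly decreasing and that ratios of the form appearing here inherit strict monotonicity.

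The uniqueness clause then follows almost for free: if $t \notin \{t_3,\ldots,t_n\}$ then $t$ lies in the interior of one of the intervals $(t_i, t_{i-1})$ (or in $(t_3,\infty)$ or $(0,t_n)$), and the strict single-crossing inequalities established above force the minimizing $\xx_n(i)$ to be strictly smaller than every competitor there, not merely $\le$. I expect the main obstacle to be the single-crossing / strict-monotonicity step for non-adjacent pairs — in principle $f_{\xx_n(i)}$ and $f_{\xx_n(j)}$ with $|i-j| \ge 2$ could cross somewhere inside $[t_i,t_{i-1}]$, so one must rule this out. The way I would handle it is to avoid comparing non-adjacent pairs directly and instead show the minimum is always attained by an adjacent-in-index vector, using the chain of adjacent comparisons together with transitivity of $\le$ on each subinterval; the compatibility ordering of the $t_i$ is exactly what makes these subintervals nest correctly so that transitivity closes the argument.
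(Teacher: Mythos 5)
Your proposal follows essentially the same route as the paper: the adjacent-pair single-crossing claim (that $f_{\xx_n(i)}$ and $f_{\xx_n(i+1)}$ meet exactly at $t_i$, with the sign of the difference reducing via the Fibonacci recurrence to the equation of Proposition \ref{UniqueIntersection} and the monotonicity of $t\mapsto(a^t+b^t)^{1/t}$) is precisely the paper's Lemma \ref{VectorIntersection}, and your resolution of the non-adjacent-pair issue --- chaining adjacent inequalities on each interval using the compatibility ordering and transitivity, with strictness off $\{t_3,\ldots,t_n\}$ giving uniqueness --- is exactly how the paper's proof of the theorem proceeds. The argument is correct as outlined.
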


Note that Theorem \ref{MinimumOver} excludes the cases where $n\in \{1,2\}$.  However, since $\mathcal S_1$ and $\mathcal S_2$ each contain one element, the behavior of 
$\min\{f_\xx(t):\xx\in \mathcal S_n\}$ is rather trivial in these cases.

The proof of Theorem \ref{MinimumOver} requires a lemma which describes the relationship between the points $t_i$ and the functions $f_{\xx_{n}(i)}(t)$.

\begin{lem} \label{VectorIntersection}
	If $3\leq i\leq n\leq N$ and $\xx_n(i)$ then the following conditions hold.
	\begin{enumerate}[(i)]
		\item $f_{\xx_n(i)}(t_i) = f_{\xx_{n}(i+1)}(t_i)$
		\item\label{Increasing} $f_{\xx_{n}(i)}(t) > f_{\xx_n(i+1)}(t)$ for all $t < t_i$
		\item\label{Decreasing} $f_{\xx_{n}(i)}(t) < f_{\xx_n(i+1)}(t)$ for all $t > t_i$
	\end{enumerate}
\end{lem}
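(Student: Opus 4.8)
The plan is to reduce the whole lemma to one algebraic identity comparing $f_{\xx_n(i)}(t)^t$ with $f_{\xx_n(i+1)}(t)^t$, after which the monotonicity already isolated in the proof of Proposition \ref{UniqueIntersection} finishes everything. To set this up, write $M_j = m(p^{h_j}/q^{h_{j-1}})$, so that each $M_j$ with $j\ge 1$ is strictly positive because $p\ge 2$. Reading the nonzero coordinates of $\xx_n(i)$ and $\xx_n(i+1)$ directly off their definitions gives
\[
	f_{\xx_n(i)}(t)^t = h_{n+1-i}\,M_{i-2}^t + h_{n+2-i}\,M_{i-1}^t, \qquad f_{\xx_n(i+1)}(t)^t = h_{n-i}\,M_{i-1}^t + h_{n+1-i}\,M_i^t .
\]
Subtracting these and invoking the Fibonacci recurrence in the form $h_{n+2-i} = h_{n+1-i} + h_{n-i}$ — legitimate because $i\le n$, the extreme case $i=n$ being $h_2 = h_1 + h_0$ — collapses the coefficient of $M_{i-1}^t$ and yields
\[
	f_{\xx_n(i)}(t)^t - f_{\xx_n(i+1)}(t)^t = h_{n+1-i}\left( M_{i-1}^t + M_{i-2}^t - M_i^t \right).
\]
Since $h_{n+1-i} > 0$ (again because $i\le n$), the sign of this difference of $t$-th powers is governed entirely by the bracketed quantity.

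The next step is to recognize the bracket as $g(t)^t - M_i^t$, where $g(t) = (M_{i-1}^t + M_{i-2}^t)^{1/t}$ is exactly the function appearing in the proof of Proposition \ref{UniqueIntersection} with $n$ replaced by $i$; that proof shows $g$ is strictly decreasing on $(0,\infty)$, and the defining equation of $t_i$ together with positivity gives $g(t_i) = M_i$. The three conclusions then follow at once. At $t = t_i$ the bracket vanishes, so $f_{\xx_n(i)}(t_i)^{t_i} = f_{\xx_n(i+1)}(t_i)^{t_i}$; both sides being nonnegative and $t_i\ge 1$, taking $t_i$-th roots gives (i). For $t < t_i$, strict monotonicity gives $g(t) > g(t_i) = M_i > 0$, hence $g(t)^t > M_i^t$, so the bracket, and therefore the difference of $t$-th powers, is strictly positive; taking $t$-th roots gives $f_{\xx_n(i)}(t) > f_{\xx_n(i+1)}(t)$, which is (ii). For $t > t_i$ the same monotonicity gives $g(t) < M_i$, the bracket is strictly negative, and taking $t$-th roots gives (iii).

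I do not expect a genuine obstacle here; once the identity is in hand the argument is essentially bookkeeping. The points needing a little care are: correctly reading which coordinates of $\xx_n(i)$ and $\xx_n(i+1)$ are nonzero and with which Fibonacci multiplicities; checking that the recurrence $h_{n+2-i} = h_{n+1-i} + h_{n-i}$ still applies at the extreme index $i = n$, where $h_{n-i} = h_0 = 0$; and recording the positivity facts $h_{n+1-i} > 0$ and $M_i > 0$, which are what license passing between inequalities among $t$-th powers and the corresponding inequalities among the measure functions themselves. All the analytic substance — existence, uniqueness, and strict monotonicity at the crossover point — has already been packaged into Proposition \ref{UniqueIntersection} and its proof.
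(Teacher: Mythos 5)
Your proof is correct and follows essentially the same route as the paper's: both compute $f_{\xx_n(i)}(t)^t$ and $f_{\xx_n(i+1)}(t)^t$ from the definition, use the Fibonacci recurrence to reduce the comparison to the sign of $M_{i-2}^t+M_{i-1}^t-M_i^t$ (after dividing by $h_{n+1-i}>0$), and then invoke the strict monotonicity of $t\mapsto (M_{i-2}^t+M_{i-1}^t)^{1/t}$ from Proposition \ref{UniqueIntersection} together with the defining equation of $t_i$. The only cosmetic difference is that you subtract and factor while the paper phrases the same reduction as a chain of equivalences.
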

\begin{proof}
	We first show that $t_i$ is the unique positive real number such that $f_{\xx_n(i+1)}(t_i) = f_{\xx_{n}(i)}(t_i)$.  Directly applying the definition of $\xx_n(i)$, we obtain that
	\begin{equation*}
		f_{\xx_n(i)}(t)^t = h_{n+1-i}m\left( \frac{p^{h_{i-2}}}{q^{h_{i-3}}} \right)^t + h_{n+2-i}m\left( \frac{p^{h_{i-1}}}{q^{h_{i-2}}} \right)^t.
	\end{equation*}
	Also using the definition of $\xx_n(i)$ we find that
	\begin{equation*}
		\xx_n(i+1) = (\ \underbrace{0,0,\ldots,0,0}_{i-2\mbox{ times}},h_{n-i},h_{n+1-i})
	\end{equation*}
	so that
	\begin{equation*}
		f_{\xx_{n}(i+1)}(t)^t = h_{n-i}m\left( \frac{p^{h_{i-1}}}{q^{h_{i-2}}} \right)^t + h_{n+1-i}m\left( \frac{p^{h_{i}}}{q^{h_{i-1}}} \right)^t.
	\end{equation*}
	We note immediately that the equality $f_{\xx_n(i)}(t) = f_{\xx_{n}(i+1)}(t)$ is equivalent to 
	\begin{equation*}
		h_{n+1-i}m\left( \frac{p^{h_{i-2}}}{q^{h_{i-3}}} \right)^t + h_{n+2-i}m\left( \frac{p^{h_{i-1}}}{q^{h_{i-2}}} \right)^t 
			= h_{n-i}m\left( \frac{p^{h_{i-1}}}{q^{h_{i-2}}} \right)^t + h_{n+1-i}m\left( \frac{p^{h_{i}}}{q^{h_{i-1}}} \right)^t
	\end{equation*}
	which simplifies to 
	\begin{equation*}
		 h_{n+1-i}m\left( \frac{p^{h_{i-2}}}{q^{h_{i-3}}} \right)^t + ( h_{n+2-i} - h_{n-i})m\left( \frac{p^{h_{i-1}}}{q^{h_{i-2}}} \right)^t = h_{n+1-i}m\left( \frac{p^{h_{i}}}{q^{h_{i-1}}} \right)^t
	\end{equation*}
	By using the recurrence relation from the Fibonacci sequence, we find this to be equivalent to
	\begin{equation*}
		h_{n+1-i}m\left( \frac{p^{h_{i-2}}}{q^{h_{i-3}}} \right)^t + h_{n+1-i}m\left( \frac{p^{h_{i-1}}}{q^{h_{i-2}}} \right)^t = h_{n+1-i}m\left( \frac{p^{h_{i}}}{q^{h_{i-1}}} \right)^t
	\end{equation*}
	Since we have assumed that $i\leq n$, we know that $h_{n+1-i} > 0$.  Hence, we have shown that
	\begin{equation*}
		f_{\xx_n(i)}(t) = f_{\xx_{n}(i+1)}(t) \iff m\left( \frac{p^{h_{i-2}}}{q^{h_{i-3}}} \right)^t + m\left( \frac{p^{h_{i-1}}}{q^{h_{i-2}}} \right)^t = m\left( \frac{p^{h_{i}}}{q^{h_{i-1}}} \right)^t,
	\end{equation*}
	and it follows from Lemma \ref{UniqueIntersection} that $t_i$ is the unique positive real number such that $f_{\xx_n(i)}(t_i) = f_{\xx_{n}(i+1)}(t_i)$.
	
	By a similar argument, we also obtain that
	\begin{equation*}
		f_{\xx_n(i)}(t) > f_{\xx_{n}(i+1)}(t) \iff \left(m\left( \frac{p^{h_{i-2}}}{q^{h_{i-3}}} \right)^t + m\left( \frac{p^{h_{i-1}}}{q^{h_{i-2}}} \right)^t\right)^{1/t} > m\left( \frac{p^{h_{i}}}{q^{h_{i-1}}} \right),
	\end{equation*}
	and
	\begin{equation*}
		f_{\xx_n(i)}(t) < f_{\xx_{n}(i+1)}(t) \iff \left(m\left( \frac{p^{h_{i-2}}}{q^{h_{i-3}}} \right)^t + m\left( \frac{p^{h_{i-1}}}{q^{h_{i-2}}} \right)^t\right)^{1/t} < m\left( \frac{p^{h_{i}}}{q^{h_{i-1}}} \right).
	\end{equation*}
	In both of the right hand inequalities, the expression on the right is constant and the expression on the left is strictly decreasing as a function of $t$.  The remaining statements of the lemma now
	follow immediately.
\end{proof}

Equipped with the previous lemma, we are ready to prove Theorem \ref{MinimumOver}.

\begin{proof}[Proof of Theorem \ref{MinimumOver}]
	We first suppose that $t \geq t_3$ so that the definition of combatible means that $t_n < t_{n-1} < \cdots < t_4 < t_3 \leq t$.
	Hence, we may apply Lemma \ref{VectorIntersection}\eqref{Decreasing} to conclude that
	\begin{equation} \label{IQChain1}
		f_{\xx_{n}(3)}(t) \leq f_{\xx_{n}(4)}(t) < \cdots < f_{\xx_{n}(n)}(t) < f_{\xx_{n}(n+1)}(t)
	\end{equation}
	and the result follows.  On the other hand, if we consider the case where $t \leq t_n$ then we have $t \leq t_n < t_{n-1} < \cdots < t_4 < t_3$ and we apply 
	Lemma \ref{VectorIntersection}\eqref{Increasing} to conclude that 
	\begin{equation} \label{IQChain2}
		f_{\xx_{n}(n+1)}(t) \leq f_{\xx_{n}(n)}(t) < \cdots < f_{\xx_{n}(4)}(t)  < f_{\xx_{n}(3)}(t) 
	\end{equation}
	and the result follows in this case as well.  Finally, we suppose that $4 \leq i\leq n$ and that $t_i \leq t \leq t_{i-1}$ which means that
	\begin{equation*}
		t_n < t_ {n-1} < \cdots t_{i+1} < t_i \leq t \leq t_{i-1} < t_{i-1} < \cdots < t_4 < t_3.
	\end{equation*}
	Then applying Lemma \ref{VectorIntersection}\eqref{Decreasing} we get that
	\begin{equation} \label{IQChain3}
		f_{\xx_n(i)}(t) \leq  f_{\xx_n(i+1)}(t) < \cdots < f_{\xx_n(n)}(t) < f_{\xx_n(n+1)}(t),
	\end{equation}
	and by applying Lemma \ref{VectorIntersection}\eqref{Increasing} we obtain that
	\begin{equation} \label{IQChain4}
		f_{\xx_n(i)}(t) \leq f_{\xx_n(i-1)}(t) < \cdots < f_{\xx_n(4)}(t) < f_{\xx_n(3)}(t) 
	\end{equation}
	so the result follows in this case as well.  The final assertion of the theorem follows from the fact that the inequalities in \eqref{IQChain1}, \eqref{IQChain2}, \eqref{IQChain3}
	and \eqref{IQChain4} are all strict inequalities when $t\not\in \{t_3,t_4,\ldots,t_n\}$.
\end{proof}

Theorem \ref{MinimumOver} constitutes the majority of the proof of Theorem \ref{ConjectureConsequences}, however, there are some additional details that need to be sorted out.

\begin{lem}\label{MeasureFunctionsSwitch}
	Suppose that $1\leq n\leq N$ and that $\xx,\yy\in \mathcal V_n$.  If $f_\xx(t) = f_\yy(t)$ for infinitely many values of $t\in (0,\infty)$ then $\xx = \yy$.
\end{lem}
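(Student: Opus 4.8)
The plan is to reduce the statement to the elementary fact that a nontrivial real linear combination of distinct exponential functions $t\mapsto c^{\,t}$ has only finitely many zeros. Recall that for $\xx=(x_1,\dots,x_N)\in\mathcal V_n$ we have $x_i=0$ whenever $i>n$, so if we set $c_i=m\!\left(p^{h_i}/q^{h_{i-1}}\right)$ for $1\le i\le N$ then
\[
f_\xx(t)^t=\sum_{i=1}^N x_i c_i^{\,t}=\sum_{i=1}^n x_i c_i^{\,t}.
\]
Hence if $f_\xx(t)=f_\yy(t)$ for infinitely many $t>0$, the function $g(t)=\sum_{i=1}^n (x_i-y_i)c_i^{\,t}$ vanishes at infinitely many $t>0$, and it suffices to conclude that $x_i=y_i$ for all $1\le i\le n$, since the coordinates of $\xx$ and $\yy$ beyond the $n$-th are already known to be zero.

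First I would record that $c_1<c_2<\cdots<c_N$; in particular these numbers are pairwise distinct. Indeed, since $\gcd(p^{h_i},q^{h_{i-1}})=1$ one has $c_i=\log\max\{p^{h_i},q^{h_{i-1}}\}=\max\{h_i\log p,\,h_{i-1}\log q\}$, and \eqref{WeakCompatible} forces $1<\log q/\log p<2$, hence $\log q>\log p>0$. Together with $h_1=h_2=1<h_3<h_4<\cdots$, a short check of the small indices (to separate $c_1$ from $c_2$ and $c_2$ from $c_3$, where the near-equal Fibonacci data must be handled by hand) followed by the monotonicity of $n\mapsto h_n$ gives $c_i<c_{i+1}$ for each $i$.

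Next I would invoke, and for completeness prove, the classical lemma: if $b_1>\cdots>b_k>0$ and $a_1,\dots,a_k\in\real$ are not all zero, then $h(t)=\sum_{j=1}^k a_j b_j^{\,t}$ has only finitely many zeros on $(0,\infty)$. After deleting the terms with $a_j=0$ we may assume all $a_j\ne 0$ and argue by induction on $k$: the nonvanishing factor $b_k^{\,t}$ can be divided out, reducing the zero set of $h$ to that of $\tilde h(t)=a_k+\sum_{j<k}a_j(b_j/b_k)^{\,t}$, whose derivative $\tilde h'(t)=\sum_{j<k}a_j\log(b_j/b_k)(b_j/b_k)^{\,t}$ is again an exponential sum, now with $k-1$ terms, distinct positive bases, and nonzero coefficients; by the inductive hypothesis $\tilde h'$ has finitely many zeros, and by Rolle's theorem $\tilde h$ has at most one more, so finitely many in all. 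Applying this with bases $\{c_i:1\le i\le n\}$ (distinct by the previous paragraph) and coefficients $a_i=x_i-y_i$, the hypothesis that $g$ has infinitely many zeros forces $x_i-y_i=0$ for every $i$, whence $\xx=\yy$.

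The main — and essentially only — obstacle is the exponential-sum lemma itself, so I would include its brief Rolle's-theorem proof rather than merely cite it; the rest is bookkeeping. The one spot calling for genuine care is the appeal to \eqref{WeakCompatible} to guarantee $\log q>\log p$ (so that $c_1\ne c_2$) and $\log q/\log p<2$ (so that $c_2\ne c_3$), since the first few Fibonacci numbers are too small for the generic monotonicity argument to apply directly.
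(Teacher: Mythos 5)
Your proof is correct, and it takes a genuinely different route from the paper's. The paper argues by contradiction: letting $j$ be the largest index with $x_j\ne y_j$, it computes $\lim_{t\to\infty}\bigl(f_\xx(t)^t-f_\yy(t)^t\bigr)^{1/t}=m\bigl(p^{h_j}/q^{h_{j-1}}\bigr)>0$, so the agreement points are confined to a bounded interval and therefore accumulate; since $t\mapsto f_\xx(t)^t$ and $t\mapsto f_\yy(t)^t$ extend to entire functions, the identity theorem forces them to coincide everywhere, a contradiction. You instead reduce to the statement that a nontrivial combination $\sum_i(x_i-y_i)c_i^{\,t}$ of distinct exponentials has finitely many zeros, proved by the classical divide-by-$b_k^{\,t}$, differentiate, and apply Rolle induction. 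Your approach is more elementary (no complex analysis), yields the quantitative bound of at most $n-1$ agreement points, and — a genuine merit — makes explicit the monotonicity $c_1<c_2<\cdots<c_N$ of the Mahler measures, including the careful treatment of the small indices via $1<\log q/\log p<2$ from \eqref{WeakCompatible}; the paper's limit computation tacitly relies on this same monotonicity (to identify $c_j$ as the dominant base) without spelling it out. The paper's argument is shorter, at the cost of invoking analyticity and needing the preliminary boundedness step to manufacture an accumulation point.
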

\begin{proof}
	Suppose that $\xx = (x_1,x_2,\ldots,x_N)$ and $\yy = (y_1,y_2,\ldots,y_N)$ so that
	\begin{equation*}
		f_\xx(t) =  \left(\sum_{i=1}^N x_i m\left(\frac{p^{h_i}}{q^{h_{i-1}}}\right)^t\right)^{1/t} \quad\mbox{and}\quad 
			f_\yy(t) =  \left(\sum_{i=1}^N y_i m\left(\frac{p^{h_i}}{q^{h_{i-1}}}\right)^t\right)^{1/t}.
	\end{equation*}
	Supposing that $\xx \ne \yy$ then we may assume that $j$ is the largest integer such that $x_j \ne y_j$, and we assume without loss of generality that $x_j > y_j$.
	Consequently, we find that
	\begin{equation*}
		\lim_{t\to \infty} \left(f_\xx(t)^t - f_\yy(t)^t\right)^{1/t} = m\left( \frac{p^{h_j}}{q^{h_{j-1}}}\right) > 0,
	\end{equation*}
	and therefore, $f_\xx(t) > f_\yy(t)$ for all sufficiently large $t$.  We have now established the existence of a closed interval $I\subseteq [0,\infty)$ such that $f_\xx(t) = f_\yy(t)$ for infinitely many values
	of $t\in I$.  Since $t\mapsto f_\xx(t)^t$ and $t\mapsto f_\yy(t)^t$ define entire functions for $t\in \com$, we conclude that $f_\xx(t) = f_\yy(t)$ for all $t\in \com$, a contradiction.
\end{proof}

All of our previous lemmas enable the proof of Theorem \ref{ConjectureConsequences}.

\begin{proof}[Proof of Theorem \ref{ConjectureConsequences}]
	To prove the first assertion, we assume that $\mathcal X\subseteq \mathcal V_n$ is such that $m_t(\alpha_n) = \min\{f_\xx(t):\xx\in \mathcal X\}$.
	Of course, we also assume that $m_t(\alpha_n) = \min\{f_\xx(t):\xx\in \mathcal S_n\}$.  It is easily checked that $\#\mathcal S_1 = \#\mathcal V_1 = 1$
	and $\#\mathcal S_2 = \#\mathcal V_2 = 1$ so the assertion is trivial in the cases of $n\in \{1,2\}$.  Therefore, we may assume that $3\leq n\leq N$ and are permitted to apply Theorem \ref{MinimumOver}.
	
	We must now prove that $\xx_n(i)\in\mathcal X$ for all $3\leq i\leq n+1$.  First assuming that $4\leq i\leq n$, Theorem \ref{MinimumOver} implies that $m_t(\alpha_n) = f_{\xx_n(i)}(t)$ for
	all $t\in [t_{i},t_{i-1}]$.  By our assumptions, for each $t\in [t_{i},t_{i-1}]$ there must exist $\yy_t\in \mathcal X$ such that $f_{\yy_t}(t) = f_{\xx_n(i)}(t)$.  Since $\mathcal X$ is certainly finite, 
	the pigeonhole principle implies the existence of $\yy\in \mathcal X$ such that $f_\yy(t) = f_{\xx_n(i)}(t)$ for infinitely many values of $t\in (0,\infty)$.  Then Lemma \ref{MeasureFunctionsSwitch}
	yields that $\xx_n(i) = \yy\in \mathcal X$ as required.  A similar argument applies in the cases where $ i =3$ and $i=n+1$ which completes the proof of the first assertion.
	
	We must now prove that $\{t_3,t_4,\ldots,t_n\}$ is the precise set of exceptional points for $\alpha_n$.  It clearly follows from Theorem \ref{MinimumOver} that all points outside of this set
	are standard.  If $t_i$ is standard for $\alpha_n$ then there exists $\bar\alpha\in \mathcal P(\alpha)$ and $0 < \varepsilon < \min\{t_i - t_{i+1}:3\leq i\leq n\}$ such that $m_t(\alpha_n) = f_{\bar\alpha}(t)$ for all 
	$t\in (t_i-\varepsilon,t_i+\varepsilon)$.  Now it follows from Theorem \ref{MinimumOver} that
	\begin{equation*}
		f_{\bar\alpha}(t) = f_{\xx_n(i+1)}(t)\quad\mbox{for all } t\in (t_i-\varepsilon,t_i)
	\end{equation*}
	and
	\begin{equation*}
		f_{\bar\alpha}(t) = f_{\xx_n(i)}(t)\quad\mbox{for all } t\in (t_i,t_i+\varepsilon).
	\end{equation*}
	Since $t\mapsto f_{\bar\alpha}(t)^t$, $t\mapsto f_{\xx_n(i+1)}(t)^t$ and $t\mapsto f_{\xx_n(i)}(t)^t$ define analytic functions for $t\in \com$, we conclude that they are all equal for $t\in \com$.
	This forces $f_{\xx_n(i+1)}(t) = f_{\xx_n(i)}(t)$ for all $t\in (0,\infty)$, and Lemma \ref{MeasureFunctionsSwitch} implies that $\xx_n(i+1) = \xx_n(i)$, a contradiction.
\end{proof}


\section{Proof of Theorem \ref{MainProgress}} \label{MainProgressProofs}

The proof of the set containments \eqref{ProgressContainments} in Theorem \ref{MainProgress} requires the following linear algebra lemma.

\begin{lem} \label{SEquals}
	Suppose that $1\leq i \leq N$ and that $\zz = (z_1,z_2,\ldots,z_N)\in \mathcal V_i$.  If there exists $2\leq j\leq i$ such that $z_\ell = 0$ for all $\ell\not\in \{j-1,j\}$ then $\zz = \xx_i(j+1)$.
	In particular, $\zz\in \mathcal S_i$.
\end{lem}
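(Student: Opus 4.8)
The plan is to turn the hypothesis into a $2\times 2$ linear system over $\real$ and solve it explicitly using two classical Fibonacci identities. Since $z_\ell = 0$ for every $\ell\notin\{j-1,j\}$, the condition $\zz\in\mathcal V_i$ reads
\begin{equation*}
	\begin{pmatrix} h_{j-1} & h_j \\ h_{j-2} & h_{j-1}\end{pmatrix}\begin{pmatrix} z_{j-1} \\ z_j \end{pmatrix} = z_{j-1}\begin{pmatrix} h_{j-1} \\ h_{j-2}\end{pmatrix} + z_j\begin{pmatrix} h_j \\ h_{j-1}\end{pmatrix} = \begin{pmatrix} h_i \\ h_{i-1}\end{pmatrix},
\end{equation*}
so the whole lemma reduces to showing that this system has the unique solution $(z_{j-1},z_j) = (h_{i-j},h_{i-j+1})$, after which comparison with the definition of $\xx_i(j+1)$ finishes the proof.

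First I would record that the coefficient matrix $M$ is invertible: by Cassini's identity its determinant is $h_{j-1}^2 - h_{j-2}h_j = (-1)^j \ne 0$ (the conventions $h_0=0$, $h_1=1$ handle the boundary case $j=2$). Hence the system has at most one solution, and it suffices to check that $(h_{i-j},h_{i-j+1})$ — which makes sense because $i-j\ge 0$ by hypothesis — is a solution. For this I would invoke the Fibonacci addition formula $h_{m+n} = h_{m-1}h_n + h_m h_{n+1}$ with $m = i-j+1$ and $n\in\{j-1,j-2\}$, which yields
\begin{equation*}
	h_{i-j}h_{j-1} + h_{i-j+1}h_j = h_i \qquad\text{and}\qquad h_{i-j}h_{j-2} + h_{i-j+1}h_{j-1} = h_{i-1};
\end{equation*}
these are exactly the two scalar equations of the system.

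It then follows that $z_{j-1} = h_{i-j}$, $z_j = h_{i-j+1}$, and $z_\ell = 0$ otherwise, which is precisely the description of $\xx_i(j+1) = (\underbrace{0,\ldots,0}_{j-2},h_{i-j},h_{i-j+1})$; and since $2\le j\le i$ forces $3\le j+1\le i+1$, this vector lies in $\mathcal S_i$ by construction, giving the final assertion. I do not expect a genuine obstacle here: the argument is elementary once the two identities are available, and the only thing to watch is that the index manipulations remain valid at the endpoints (small values of $i-j$, and $j=2$ where $h_{j-2}=h_0=0$), which they do under the standard conventions for the Fibonacci numbers.
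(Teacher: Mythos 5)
Your proof is correct and follows essentially the same route as the paper: both reduce the hypothesis to a $2\times 2$ linear system with coefficient matrix $\begin{pmatrix} h_{j-1} & h_j \\ h_{j-2} & h_{j-1}\end{pmatrix}$ and invoke Cassini's identity to get determinant $(-1)^j\ne 0$. The only cosmetic difference is that the paper uses the already-established containment $\xx_i(j+1)\in\mathcal V_i$ to phrase the system homogeneously in $\zz-\xx_i(j+1)$, while you verify directly via the Fibonacci addition formula that $(h_{i-j},h_{i-j+1})$ solves the inhomogeneous system; both are fine.
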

\begin{proof}
	Since $\zz, \xx_i(j+1)\in \mathcal V_i$, we immediately notice that $A(\zz - \xx_i(j+1)) = {\bf 0}$.  However, both vectors $\zz$ and $\xx_i(j+1)$ have $0$'s in every entry except possibly in entries $j-1$ and $j$.
	Therefore, it follows that
	\begin{equation} \label{MatrixEquals}
		\begin{pmatrix} h_{j-1} & h_j \\ h_{j-2} & h_{j-1} \end{pmatrix}\left( \begin{pmatrix} z_{j-1} \\ z_{j} \end{pmatrix} - \begin{pmatrix} h_{i-j} \\ h_{i-j+1} \end{pmatrix}\right) = {\bf 0}
	\end{equation}
	It is a straightforward proof by induction on $j$ that the $2\times 2$ matrix on the left hand side of \eqref{MatrixEquals} has determinant equal to $(-1)^{j}$.  
	The result now follows by multiplying both sides of \eqref{MatrixEquals} by the inverse of this matrix.
\end{proof}

The proof of the set containments in \eqref{ProgressContainments} at the beginning of Theorem \ref{MainProgress} can be done immediately, so we include that proof here.  
The remainder of the proof of Theorem \ref{MainProgress} is provided later in this section.

\begin{proof}[Proof of \eqref{ProgressContainments}]
	It follows directly from the definitions that $\mathcal R_n \subseteq \mathcal C_n \subseteq  \mathcal V_n$, so it remains only to show that $\mathcal S_n \subseteq \mathcal R_n$.
	Suppose that $\zz\in \mathcal S_n$ so we know that $\zz$ must have the form 
	\begin{equation*}
		\zz = ( \underbrace{0,0,\ldots,0,0}_{j-2\mbox{ times}},z_{j-1},z_{j})
	\end{equation*}
	for some $1\leq j\leq n$ (In the case $j=1$, our notation should be interpreted as $\zz = (z_j)$).  Further suppose that $(\yy_1,\yy_2,\ldots,\yy_K)$ is factorization of $\zz$ and fix $k\in\intg$ such that $1\leq k\leq K$.  
	We must show that there exists $i$ such that $\yy_k\in \mathcal S_i$.  To see this, we first observe that $\yy_k$ must have a zero in every every except possibly in the $(j-1)$th and $j$th entries so we may write
	\begin{equation} \label{YVector}
		\yy_k = ( \underbrace{0,0,\ldots,0,0}_{j-2\mbox{ times}},y_{j-1},y_{j}).
	\end{equation} 
	Moreover, we know from the definition of factorization that $\yy_k\in \mathcal V_i$ for some $i$, and note that we must have $ j \leq i+1$ because otherwise $\yy_k = {\bf 0}\not\in \mathcal V_i$.
	Now we consider the following three cases.
	\begin{enumerate}
		\item If $j = 1$ then $A\yy_k = (y_j,0)$ forcing $i =1$ and $y_j= 1$.  This means that $\yy_k = \xx_i(2)$.
		\item If $2\leq j\leq i$ then Lemma \ref{SEquals} yields that $\yy_k = \xx_i(j+1)$.
		\item If $j = i+1$ then $y_j = 0$ and $y_{j-1} = 1$ so that $\yy_k = \xx_i(j)$.
	\end{enumerate}
	In all cases, we observe that $\yy\in \mathcal S_i$ as required.
\end{proof}

In order to complete the proof of Theorem \ref{MainProgress}, we shall require several additional definitions and preliminary lemmas.
We once again remind the reader that all definitions depend on the choices of $N$, $p$ and $q$ even though our notation will not reflect these dependencies.
For a point $\xx\in \mathcal V_n$, we define define the {\it infimum attaining set for $\xx$} to be $$\mathfrak U_\xx = \{t\in (0,\infty): m_t(\alpha_n) = f_\xx(t)\}.$$  The following lemma establishes
that we may essentially disregard any point $\xx\in \mathcal V_n$ for which $\mathfrak U_\xx$ is finite.

\begin{lem} \label{InfAttainSet}
	Suppose that $\mathcal X \subseteq \mathcal Y \subseteq \mathcal V_n$ and that  $\mathfrak U_\zz$ is finite for all $\zz \in\mathcal Y\setminus \mathcal X$.  If 
	$m_t(\alpha_n) = \min\{f_\xx(t): \xx\in \mathcal Y\}$ then $m_t(\alpha_n) = \min\{f_\xx(t): \xx\in \mathcal X\}$.
\end{lem}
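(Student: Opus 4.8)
The statement is a clean ``$\min$ over a larger set coincides with $\min$ over a smaller set provided the extra points only matter on a finite set of $t$'' lemma, so the strategy is a pointwise argument at each fixed $t$, using continuity to upgrade it. First I would record the trivial direction: since $\mathcal X\subseteq\mathcal Y$, we always have $\min\{f_\xx(t):\xx\in\mathcal X\}\ge\min\{f_\xx(t):\xx\in\mathcal Y\}=m_t(\alpha_n)$, so it suffices to prove the reverse inequality, i.e. that for every $t>0$ there is some $\xx\in\mathcal X$ with $f_\xx(t)\le m_t(\alpha_n)$ (hence $=m_t(\alpha_n)$).

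\textbf{Key steps.} Fix $t_0>0$. Let $E=\bigcup_{\zz\in\mathcal Y\setminus\mathcal X}\mathfrak U_\zz$, which is a finite set by hypothesis (a finite union of finite sets, since $\mathcal Y\setminus\mathcal X$ is a subset of the finite set $\mathcal V_n$). If $t_0\notin E$, then no point of $\mathcal Y\setminus\mathcal X$ attains the infimum at $t_0$, so whichever point of $\mathcal Y$ realizes $\min\{f_\xx(t_0):\xx\in\mathcal Y\}=m_{t_0}(\alpha_n)$ must in fact lie in $\mathcal X$, giving $\min\{f_\xx(t_0):\xx\in\mathcal X\}=m_{t_0}(\alpha_n)$ directly. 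This handles all but finitely many $t_0$. For the finitely many $t_0\in E$, I would argue by continuity: the function $t\mapsto\min\{f_\xx(t):\xx\in\mathcal X\}$ is continuous on $(0,\infty)$ (a finite minimum of the continuous functions $f_\xx$, each of which is continuous since it is the norm-type expression $(\sum x_i m(\cdot)^t)^{1/t}$), and likewise $t\mapsto m_t(\alpha_n)$ is continuous (this is noted in the introduction, and also follows from Theorem~\ref{PreviousMain} as a finite minimum over $\mathcal V_n$). Since these two continuous functions agree on the cofinite set $(0,\infty)\setminus E$, which is dense in $(0,\infty)$, they agree everywhere; in particular they agree at the points of $E$.

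\textbf{Main obstacle.} There is essentially no deep obstacle here; the only point that needs a little care is justifying that the relevant functions are continuous so that the ``agree on a dense set $\Rightarrow$ agree everywhere'' step is legitimate. This is routine: each $f_\xx$ is continuous on $(0,\infty)$, a finite minimum of continuous functions is continuous, and $t\mapsto m_t(\alpha_n)$ is a finite minimum over $\mathcal V_n$ by Theorem~\ref{PreviousMain}, hence continuous as well. One could alternatively avoid continuity entirely by a slightly more hands-on argument at each $t_0\in E$: pick a sequence $t_k\to t_0$ with $t_k\notin E$, extract (by pigeonhole on the finite set $\mathcal X$) a single $\xx\in\mathcal X$ with $f_\xx(t_k)=m_{t_k}(\alpha_n)$ for infinitely many $k$, and then pass to the limit using continuity of $f_\xx$ and of $t\mapsto m_t(\alpha_n)$ to conclude $f_\xx(t_0)=m_{t_0}(\alpha_n)$. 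Either way the proof is short; I would present the continuity/density version as the cleanest.
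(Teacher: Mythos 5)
Your proposal is correct and uses essentially the same ingredients as the paper: the paper's own proof is exactly the "hands-on" alternative you describe (a sequence $s_i\to s$, pigeonhole on the finite set $\mathcal Y$ to find one $\xx$ attaining the minimum infinitely often, hence $\xx\in\mathcal X$, then pass to the limit by continuity of $f_\xx$ and $t\mapsto m_t(\alpha_n)$). Your primary "agree on a cofinite, hence dense, set plus continuity" packaging is a valid and slightly cleaner reorganization of the same idea, so no substantive difference.
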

\begin{proof}
	Suppose that $\yy\in \mathcal Y$ and that $s \in (0,\infty)$ are such that $m_s(\alpha_n) = f_\yy(s)$.  We need to show that there exists $\xx\in \mathcal X$ such that $m_s(\alpha_n) = f_\xx(s)$.
	To see this, let $\{s_i\}_{i=1}^\infty$ be a sequence of distinct points in $(0,\infty)$ converging to $s$.  For each $i$, there exists a point $\xx_i\in \mathcal Y$ such that $m_{s_i}(\alpha_n) = f_{\xx_i}(s_i)$.
	Since $\mathcal Y$ is finite, we apply the Pigeonhole Principle to assume without loss of generality that there exists $\xx\in \mathcal Y$ such that $m_{s_i}(\alpha_n) = f_\xx(s_i)$ for all $i$.
	This assertion clearly means that $\mathfrak U_\xx$ is infinite, so we conclude that $\xx\in \mathcal X$.  Moreover, by continuity of the maps $t\mapsto f_\xx(t)$ and $t\mapsto m_t(\alpha_n)$ 
	we deduce that
	\begin{equation*}
		f_\xx(s) = \lim_{i\to\infty} f_\xx(s_i) = \lim_{i\to\infty} m_{s_i}(\alpha_n) = m_s(\alpha_n) 
	\end{equation*}
	which completes the proof of the lemma.
\end{proof}

Our next lemma gives us a strategy to prove that $\mathfrak U_\zz$ is finite given a particular factorization of $\zz$.

\begin{lem} \label{FactorizationLemma}
	If $\zz\in \mathcal V_n$ and $(\xx_1,\xx_2,\ldots,\xx_K)$ is a factorization of $\zz$ then
	\begin{equation} \label{FactorizationContainment}
		\mathfrak U_\zz\subseteq \bigcap_{k=1}^K \mathfrak U_{\xx_k}.
	\end{equation}
\end{lem}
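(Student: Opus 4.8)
The plan is to reduce everything to a single identity: for \emph{any} factorization $(\xx_1,\ldots,\xx_K)$ of $\zz$ one has $f_\zz(t)^t=\sum_{k=1}^K f_{\xx_k}(t)^t$ for all $t>0$. This is immediate because raising a measure function to the $t$-th power makes it linear in the exponent vector, $f_\xx(t)^t=\sum_{i=1}^N x_i\,m(p^{h_i}/q^{h_{i-1}})^t$, and $\zz=\sum_k\xx_k$. First I would record two structural facts. Each $\xx_k$ lies in exactly one of the sets $\mathcal V_1,\ldots,\mathcal V_n$ — say $\xx_k\in\mathcal V_{i_k}$ with $1\le i_k\le n$ — since the vectors $(h_i,h_{i-1})^T$ are pairwise distinct. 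Moreover $\alpha_n=\prod_{k=1}^K\alpha_{i_k}$: applying the matrix $A$ to $\zz=\sum_k\xx_k$ gives $h_n=\sum_k h_{i_k}$ and $h_{n-1}=\sum_k h_{i_k-1}$, and substituting into $\alpha_n=p^{h_n}/q^{h_{n-1}}$ yields the product formula.

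Next I would assemble two inequalities valid for every $t>0$. Since $\omega(\xx_k)\in\mathcal P(\alpha_{i_k})$ with $f_{\omega(\xx_k)}(t)=f_{\xx_k}(t)$, the definition of $m_t$ as an infimum gives $m_t(\alpha_{i_k})\le f_{\xx_k}(t)$ for each $k$. Iterating the inequality $m_t(\alpha\beta)^t\le m_t(\alpha)^t+m_t(\beta)^t$ along the product $\alpha_n=\prod_k\alpha_{i_k}$ gives $m_t(\alpha_n)^t\le\sum_{k=1}^K m_t(\alpha_{i_k})^t$.

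Now fix $t\in\mathfrak U_\zz$, i.e. $m_t(\alpha_n)^t=f_\zz(t)^t$, and chain the pieces:
\begin{equation*}
	m_t(\alpha_n)^t = f_\zz(t)^t = \sum_{k=1}^K f_{\xx_k}(t)^t \geq \sum_{k=1}^K m_t(\alpha_{i_k})^t \geq m_t(\alpha_n)^t.
\end{equation*}
Hence all the inequalities are equalities; in particular $\sum_{k=1}^K\big(f_{\xx_k}(t)^t - m_t(\alpha_{i_k})^t\big)=0$ with every summand nonnegative, which forces $f_{\xx_k}(t)=m_t(\alpha_{i_k})$ for each $k$. Since $\xx_k\in\mathcal V_{i_k}$, this says exactly that $t\in\mathfrak U_{\xx_k}$; as $k$ was arbitrary, $t\in\bigcap_{k=1}^K\mathfrak U_{\xx_k}$, which is the assertion.

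I do not expect a genuine obstacle here: the argument is a short combination of the $t$-power subadditivity of $m_t$ with the linearity of $f_\xx(t)^t$ in $\xx$. The only points requiring a little attention are checking that each $\xx_k$ sits in a unique $\mathcal V_{i_k}$ with $i_k\le n$ (immediate from the definition of a factorization) and the trivial case $K=1$. It is worth noting that the proof in fact shows something slightly stronger — if $t\in\mathfrak U_\zz$ then every factor $\omega(\xx_k)$ realizes $m_t(\alpha_{i_k})$ — which is the precise form in which the lemma gets used to prove that certain infimum-attaining sets are finite.
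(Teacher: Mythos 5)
Your proof is correct, but it takes a different route from the paper's. The paper argues by contradiction via an exchange: assuming $t\in\mathfrak U_\zz$ but $t\notin\mathfrak U_{\xx_1}$ with $\xx_1\in\mathcal V_j$, it invokes Theorem \ref{PreviousMain} to produce $\yy\in\mathcal V_j$ with $f_{\yy}(t)<f_{\xx_1}(t)$, and then uses the linearity of $\xx\mapsto f_\xx(t)^t$ to form $\zz'=\yy+\xx_2+\cdots+\xx_K\in\mathcal V_n$ satisfying $f_{\zz'}(t)^t<f_\zz(t)^t=m_t(\alpha_n)^t$, contradicting Theorem \ref{PreviousMain} again. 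You instead run a direct sandwich argument: from $A\zz=\sum_k A\xx_k$ you get $\alpha_n=\prod_k\alpha_{i_k}$, then combine $m_t(\alpha_{i_k})\le f_{\xx_k}(t)$ (definition of $m_t$ as an infimum) with the $t$-power subadditivity of $m_t$ to trap $\sum_k f_{\xx_k}(t)^t$ between $m_t(\alpha_n)^t$ on both sides, forcing termwise equality. Both proofs rest on the same linearity identity $f_\zz(t)^t=\sum_k f_{\xx_k}(t)^t$, but yours has the advantage of being self-contained --- it never needs Theorem \ref{PreviousMain}, only the definition of $m_t$ and its metric property --- and it yields the slightly stronger conclusion you note, that each factor actually realizes $m_t(\alpha_{i_k})$ at such $t$. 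The paper's exchange argument, on the other hand, stays entirely inside the combinatorial framework of the sets $\mathcal V_i$, which is the setting in which the lemma is subsequently applied.
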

\begin{proof}
	Suppose that $t\in \mathfrak U_\zz$ and assume without loss of generality that $t\not\in \mathfrak U_{\xx_1}$.  Additionally, we may assume that $\xx_1 \in \mathcal V_j$ for some $1\leq j\leq n$.
	Since $t\not\in \mathfrak U_{\xx_1}$ there must exist $\yy\in \mathcal V_j$ such that $f_{\xx_1}(t) > f_{\yy}(t)$.  Then using the linearity of the map $\xx\mapsto f_\xx(t)^t$ and setting 
	$\zz' = \yy + \xx_2 + \cdots + \xx_K$  we obtain that
	\begin{equation*}
		f_\zz(t)^t = \sum_{k=1}^K f_{\xx_k}(t)^t > f_{\yy}(t) + \sum_{k=2}^K f_{\xx_k}(t)^t = f_{\zz'}(t)^t.
	\end{equation*}
	Moreover, $\zz'$ certainly has non-negative integer entries and $A(\zz') = A(\zz)$.  This implies that $\zz'\in \mathcal V_n$ so we contradict the fact that $t\in \mathfrak U_\zz$.
\end{proof}

The combination of Lemmas \ref{InfAttainSet} and \ref{FactorizationLemma} suggests a strategy to prove Conjecture \ref{MainConj}.  If $\zz = (z_1,\ldots,z_n) \in \mathcal V_n\setminus \mathcal S_n$ has a 
factorization satisfying the hypotheses of Lemma \ref{FactorizationLemma}, then we can show that $\mathfrak U_\zz$ is finite by showing that $\cap_{k=1}^K \mathfrak U_{\xx_k}$ is finite.  
Then we may apply Lemma \ref{InfAttainSet} to eliminate each such point $\zz$ from consideration in $m_t(\alpha_n)$.  

The main advantage of this approach is that it is often easier to study $\mathfrak U_{\xx_k}$ than it is to study $\mathfrak U_\zz$. 
For example, if we are attempting to prove Conjecture \ref{MainConj} by induction on $n$, then any non-trivial factorization $(\xx_1,\xx_2,\ldots,\xx_K)$
will have $\xx_k\in \mathcal V_{i}$ for some $1\leq i < n$.  Therefore, we would have the inductive hypothesis that
\begin{equation*}
	m_t(\alpha_i) = \min\{f_\xx(t): \xx\in \mathcal S_i\}\quad\mbox{for all } 1\leq i < n
\end{equation*}
to assist us in showing that $\cap_{k=1}^K \mathfrak U_{\xx_k}$ is finite.  

The primary disadvantage of the above strategy is that it cannot be used with the factorizations defined by \eqref{TrivialFactorizations}.  
Indeed, the former has $K=1$ and $\zz = \xx_1$ so Lemma \ref{FactorizationLemma} provides no information, and the latter satisfies
\begin{equation*}
	\bigcap_{k=1}^K \mathfrak U_{\xx_k} = (0,t_j],
\end{equation*}
where $j$ is the largest index such that $z_j \ne 0$.  Hence, we would need to obtain a more creative factorization for $\zz$ than those appearing in 
\eqref{TrivialFactorizations}, and unfortunately, such a factorization does not always exist.  For example, it can be shown that the only factorizations of $(1,0,0,4)\in \mathcal V_7$ are the trivial
and improper factorizations.   As a result, this strategy cannot be used to provide a complete proof of Conjecture \ref{MainConj},
although it does lead to the remainder of our proof of Theorem \ref{MainProgress}.

\begin{proof}[Remainder of the Proof of Theorem \ref{MainProgress}]
	To prove \eqref{Consecutive}, it is sufficient to assume that $\zz\in \mathcal V_n\setminus \mathcal C_n$ and prove that $\mathfrak U_\zz$ is finite.  Indeed, then Lemma \ref{InfAttainSet} would
	imply the desired result.  To see this, we assume that $\zz = (z_1,z_2,\ldots,z_N)\in \mathcal V_n$ and there exists $1\leq j < N-1$ satisfying the following properties:
	\begin{enumerate}
		\item $z_j \ne 0$ and $z_{j+1} \ne 0$
		\item There exists $k > j+1$ such that $z_k \ne 0$.
	\end{enumerate}
	By our assumptions, we surely have that $j+1 < k < n$ and we write
	\begin{equation*}
		z'_i = \begin{cases} z_i & \mbox{if } i \not \in\{j,j+1,k\} \\ z_i - 1 &\mbox{if } i \in \{j,j+1,k\}. \end{cases}
	\end{equation*}
	From our assumptions we know that $z'_i \geq 0$ for all $1\leq i\leq N$.  Now we observe that
	\begin{align*}
		(z_1,z_2,\ldots, z_N) & =  ( \underbrace{0,0,\ldots,0,0}_{j-1\mbox{ times}},1,1,\underbrace{0,0,\ldots,0,0}_{N-j-1\mbox{ times}}) +
			 ( \underbrace{0,0,\ldots,0,0}_{k-1\mbox{ times}},1,\underbrace{0,0,\ldots,0,0}_{N-k\mbox{ times}}) + (z'_1,z'_2,\ldots,z'_N) \\
			 & = \xx_{j+2}(j+2) + \xx_k(k+1) + \sum_{i=1}^N z'_i\xx_i(i+1)
	\end{align*}
	However, using Lemma \ref{VectorIntersection}, we conclude that 
	\begin{equation*}
		\mathfrak U_{\xx_{j+2}(j+2)} \subseteq [t_{j+2},t_{j+1}] \quad\mbox{and}\quad \mathfrak U_{\xx_k(k+1)} \subseteq (0,t_k].
	\end{equation*}
	We also know that $k\geq j+2$ so these intervals have at most one point in common.  It now follows from Lemma \ref{FactorizationLemma} that $\mathfrak U_\zz$ is finite establishing \eqref{Consecutive}.
	
	We now complete the proof by establishing \eqref{Restricted}.  From \eqref{Consecutive}, we know that $m_t(\alpha_n) = \min\{f_\xx(t):\xx\in\mathcal C_n\}$ so we assume that 
	$\zz\in \mathcal C_n\setminus \mathcal R_n$.  Again, it is sufficient to show that $\mathfrak U_\zz$ is finite.
	Since $\zz\not\in \mathcal R_n$, there must exist a nontrivial factorization $(\xx_1,\xx_2,\ldots,\xx_K)$ which is not $\mathcal S$-restricted.  We assume without loss of generality that 
	$\xx_1\in \mathcal V_i\setminus \mathcal S_i$, and since $\zz\in \mathcal C_n$, we may also assume that $\xx_1\in \mathcal C_i$.  Because $(\xx_1,\xx_2,\ldots,\xx_K)$ is a non-trivial
	factorization, we know that $1\leq i < n$, and therefore our assumption yields
	\begin{equation*}
		f_{\xx_1}(t) \geq m_t(\alpha_i) = \min\{f_\xx(t):\xx\in \mathcal S_i\}\quad \mbox{for all } t > 0.
	\end{equation*}
	If $\mathfrak U_{\xx_1}$ is infinite, then by the Pigeonhole Principle, there exists $\yy\in \mathcal S_i$ such that $f_{\xx_1}(t) = f_{\yy}(t)$ for infinitely many values of $t$.
	Lemma \ref{MeasureFunctionsSwitch} implies that $\xx_1 = \yy$ contradicting our assumption that $\xx_1\not\in \mathcal S_i$.
\end{proof}

Our last remaining task is to prove Lemma \ref{RestrictedComputations}.

\begin{proof}[Proof of Lemma \ref{RestrictedComputations}]
	We first assume that there $\xx\in \mathcal R_i\setminus \mathcal S_i$ is such that all entries of $\zz - \xx$ are non-negative and write $\zz - \xx = (y_1,y_2,\ldots,y_N)$, where $y_i \geq 0$.
	Therefore, we conclude that
	\begin{equation*}
		\zz = \xx + (\zz - \xx) = \xx + \sum_{i=1}^N y_i\xx_i(i+1).
	\end{equation*}
	We cannot have $\xx = \zz$ because then $\xx \in \mathcal R_n$ contradicting our assumption that $\xx \in \mathcal R_i$ for $1\leq i < n$.  Additionally, we have assumed that
	$\xx \not\in \mathcal S_i$ so that we have identified a factorization of $\zz$ which is not $\mathcal S$-type, as required.
	
	To prove the other statement we must show that $\zz \in \mathcal C_n\setminus \mathcal R_n$ implies that there exists $1\leq i < n$ and 
	$\xx\in \mathcal R_i\setminus \mathcal S_i$ such that the entires of $\zz - \xx$ are non-negative.  We shall prove this assertion using induction on $n$ and we note that the base case is vacuously correct
	since $\mathcal C_1\setminus \mathcal R_1 = \emptyset$.  For the inductive step, we assume that for every $k < n$, $\yy \in \mathcal C_k\setminus \mathcal R_k$ implies that there exists $1\leq i < k$ and 
	$\xx\in \mathcal R_i\setminus \mathcal S_i$ such that the entires of $\yy - \xx$ are non-negative.
	
	Assuming that $\zz\in \mathcal C_n\setminus \mathcal R_n$ we know that $\zz$ must have a non-trivial factorization which is not $\mathcal S$-type.  Therefore, we may write
	\begin{equation} \label{ZFactor}
		\zz = \yy + \sum_{\ell=1}^L \xx_\ell,
	\end{equation}
	where $\yy \in \mathcal C_k\setminus \mathcal S_k$ and $\xx_\ell\in \cup_{i=1}^{n-1}\mathcal C_i$.  
	If $\yy\in \mathcal R_k$ then we use $\xx = \yy$ and $i = k$ to conclude the desired result.  If $\yy\not\in \mathcal R_k$ then the inductive hypothesis identifies a value $j$, with $1\leq j < k$,
	and $\xx\in \mathcal R_j\setminus\mathcal S_j$ such that $\yy - \xx$ has only non-negative entries.  Using these observations in \eqref{ZFactor}, we obtain that
	\begin{equation*}
		\zz - \xx = \yy - \xx + \sum_{\ell=1}^L \xx_\ell
	\end{equation*}
	must have only non-negative entires, completing the proof.

\end{proof}

By making a basic linear programming observation, it is possible to make an additional minor improvement to the results of Theorem \ref{MainProgress}.
A point $\zz\in \mathcal R_n$ is called a {\it vertex of $\mathcal R_n$} if $\zz$ cannot be written as a convex combination of the other points in $\mathcal R_n$.
Since the map $\xx\mapsto f_\xx(t)^t$ is a linear map, it is well-known that $\min\{f_\xx(t):\xx\in \mathcal R_n\}$ must be attained at a vertex of $\mathcal R_n$.
In conjunction with Lemma \ref{MeasureFunctionsSwitch}, these observations imply that each non-vertex of $\mathcal R_n$ may only attain the infimum in $m_t(\alpha_n)$
at finitely many points.  As a result, Lemma \ref{InfAttainSet} enables us to remove all such points from consideration.

As an example, take $n=11$ and note that some points in $\mathcal R_{11}$ are not vertices of $\mathcal R_{11}$.  Specifically, we observe that
\begin{equation*}
	\begin{pmatrix} 1 \\ 0 \\ 0 \\ 1 \\0 \\8 \\0 \\1\end{pmatrix} = 
		\frac{2}{3}\cdot \begin{pmatrix} 1 \\ 0 \\ 0 \\0 \\ 0 \\11 \\ 0 \\ 0 \end{pmatrix} + \frac{1}{3} \cdot \begin{pmatrix} 1 \\ 0 \\ 0 \\ 3 \\0 \\2 \\0 \\3\end{pmatrix}
		\quad\mbox{and}\quad
	\begin{pmatrix} 1 \\ 0 \\ 0 \\ 2 \\0 \\5 \\0 \\2\end{pmatrix} = 
		\frac{1}{3}\cdot \begin{pmatrix} 1 \\ 0 \\ 0 \\0 \\ 0 \\11 \\ 0 \\ 0 \end{pmatrix} + \frac{2}{3} \cdot \begin{pmatrix} 1 \\ 0 \\ 0 \\ 3 \\0 \\2 \\0 \\3\end{pmatrix}
\end{equation*}
meaning that $(1,0,0,1,0,8,0,1)$ and $(1,0,0,2,0,5,0,2)$ are not vertices of $\mathcal R_{11}$.  Consequently, we shall set
\begin{equation*}
	\mathcal T_{11} = \left\{ \begin{pmatrix} 0 \\ 0 \\ 0 \\0 \\ 1 \\0 \\ 0 \\ 4 \end{pmatrix},  \begin{pmatrix} 0 \\ 0 \\ 1 \\0 \\ 0 \\3 \\ 0 \\ 3 \end{pmatrix}, 
				\begin{pmatrix} 0 \\ 1 \\ 0 \\0 \\ 2 \\0 \\ 6 \\ 0 \end{pmatrix},  \begin{pmatrix} 1 \\ 0 \\ 0 \\0 \\ 0 \\11 \\ 0 \\ 0 \end{pmatrix},
				 \begin{pmatrix} 1 \\ 0 \\ 0 \\1 \\ 0 \\9 \\ 1 \\ 0 \end{pmatrix}, \begin{pmatrix} 1 \\ 0 \\ 0 \\ 3 \\0 \\2 \\0 \\3\end{pmatrix}\right\}.
\end{equation*}
Assuming we already know that $m_t(\alpha_i) = \min\{f_\xx(t):\xx\in \mathcal S_i\}$ for all $1\leq i < 11$, then we may conclude that 
$m_t(\alpha_{11}) = \min\{f_\xx(t):\xx\in \mathcal T_{11}\cup \mathcal S_{11}\}$, a slight improvement over applying Theorem \ref{MainProgress}\eqref{Restricted} directly in this case.

This strategy is not sufficient to prove Conjecture \ref{MainConj} as not all points in $\mathcal R_n\setminus \mathcal S_n$ may be written as a convex combination of points in $\mathcal S_n$.
Moreover, we are unaware of an efficient computational method for determining the precise list of vertices of $\mathcal R_n$.  Hence, we don't believe that these observations alone
contribute significantly to our work in this article.  Nevertheless, they do provide some hope that Conjecture \ref{MainConj} could be solved using one of the well-known linear programming
techniques (see \cite{Bert,Sierksma} for a discussion of these methods).

\end{document}